\definecolor{DanBlue}{RGB}{0,0,225}
\DeclareMathOperator\SL{SL}
\DeclareMathOperator\DPV{DPV}
\DeclareMathOperator\Rept{Re}
\DeclareMathOperator\Rs{Res}
\def\Real{{\mathbb R}}
\def\rot{{\rho}}
\def\trans{{\mcal{T}}}
\def\bbR{{\mathbb R}}
\def\Sphere{{\mathbb S}}
\def\Isom{{\mathrm{Isom}}}
\def\SO{{\mathrm{SO}}}
\def\bbC{{\mathbb C}}
\def\bbZ{{\mathbb Z}}
\def\bbQ{{\mathbb Q}}
\def\bbN{{\mathbb N}}
\def\Expec{{\mathbf E}\,}
\def\eps{{\varepsilon}}
\def\One{{\mathbf{1}}}
\newcommand{\diff}{\mathop{}\!\mathrm{d}}
\newcommand{\Ft}[1]{{\widehat{#1}}}
\newcommand{\Id}{\text{Id}}
\newcommand{\id}{\text{id}}
\newcommand{\Tr}{\operatorname{Tr}}
\def\lsim{{\,\lesssim\,}}
\newcommand{\mbf}[1]{{\mathbf{#1}}}
\newcommand{\mcal}[1]{{\mathcal{#1}}}
\newcommand{\Prob}{\mathbb{P}}
\newcommand{\gsim}{\gtrsim}
\newcommand{\supp}{{\operatorname*{supp}\,}}
\numberwithin{equation}{section}
\declaretheorem[numberwithin=section]{theorem}
\declaretheorem[sibling=theorem]{lemma,proposition,corollary,conjecture,definition}
\newtheorem{claim}[theorem]{Claim}
\title{Local limit theorems for random isometries of the plane}
\author{Reuben Drogin}
\address{Department of Mathematics, Yale University, New Haven, CT}
\email{reuben.drogin@yale.edu}
\author{Felipe Hern{\'a}ndez}
\address{Department of Mathematics, Penn State University, State College, PA}
\email{felipeh@psu.edu}
\date{\today}
\begin{document}

\begin{abstract}
We consider a random walk
$(Y_N)_{N\geq 0}$ on $\Real^2$ generated by
successively applying independent random isometries, drawn from a
fixed measure $\mu$,
to the point $0$.
When the support of $\mu$ is finite and includes an irrational rotation satisfying a Diophantine condition,
we establish a local central limit theorem (LCLT) for $Y_N$
down to super-polynomially small scales.
When $\mu$ includes rotations satisfying a further algebraic condition,
we prove that a LCLT holds down to the scale
$\exp(-cN^{1/3}/(\log N)^2)$. Due to group-theoretic obstructions,
this is sharp
for symmetric $\mu$, up to the $\log$ factor.
Lastly for a special class of asymmetric $\mu$,
we obtain an LCLT down to the much finer scale $\exp(-cN^{1/2})$.
The proofs relate the fine-scale distribution of $Y_N$ to a question about the values of
integer polynomials on the unit circle.
\end{abstract}
\maketitle

\section{Introduction }
In this paper we consider the fine-scale distribution of random walks on planar isometries. In particular, we are interested in the distribution of the ``translation part'' of the random isometry $g_1\cdots g_N$ given by
\begin{equation}\label{def:YN}
Y_N := \left(g_1 g_2 \cdots g_N\right)\left(0\right),
\end{equation}
where $g_j\in \Isom(\mathbb{R}^2)$ are independently and uniformly sampled from a fixed
probability measure $\mu$ with finite support.

If $\supp \mu$ contains only pure translation elements,
then the classical Berry-Esseen Central Limit theorem implies that $Y_N$ has the law of a Gaussian distribution
down to unit scales.
That is, for each $N$ there is a Gaussian $Z_N$ such that
for all $r\gsim 1$ and $x_0\in \mathbb{R}^2$
\begin{equation}\label{eq:local-lim}
\begin{aligned}
\left|\Prob\left(Y_N\in B_r(x_0)\right) - \Prob\left(Z_N\in B_r(x_0)\right)\right| = o\left(N^{-1} r^2\right),
\end{aligned}
\end{equation}
More generally, we say that $Y_N$ satisfies a local central limit theorem (LCLT)
to scale $\delta_N$ if for each $N$ there exists a Gaussian $Z_N$,
such that~\eqref{eq:local-lim} holds uniformly for $r>\delta_N$ and $x_0\in \mathbb{R}^2$.
In this paper we investigate how the presence of rotations greatly improves
the scale at which ~\eqref{eq:local-lim} holds.

Group-theoretic considerations place significant constraints on the lower bound for $\delta_N$.
The first and most obvious restriction is that $\delta_N$ can decay only polynomially when the
group generated by $\supp \mu$ has only polynomial growth.  This happens in particular when
all rotation angles in $\supp\mu$ are rational multiples of $\pi$.
See Figure~\ref{fig:rational-vs-irrational} for an illustration of
the difference in this case versus when $\supp\mu$ contains an irrational rotation.

\begin{figure}
\begin{tabular}{cc}
\includegraphics[scale=0.1]{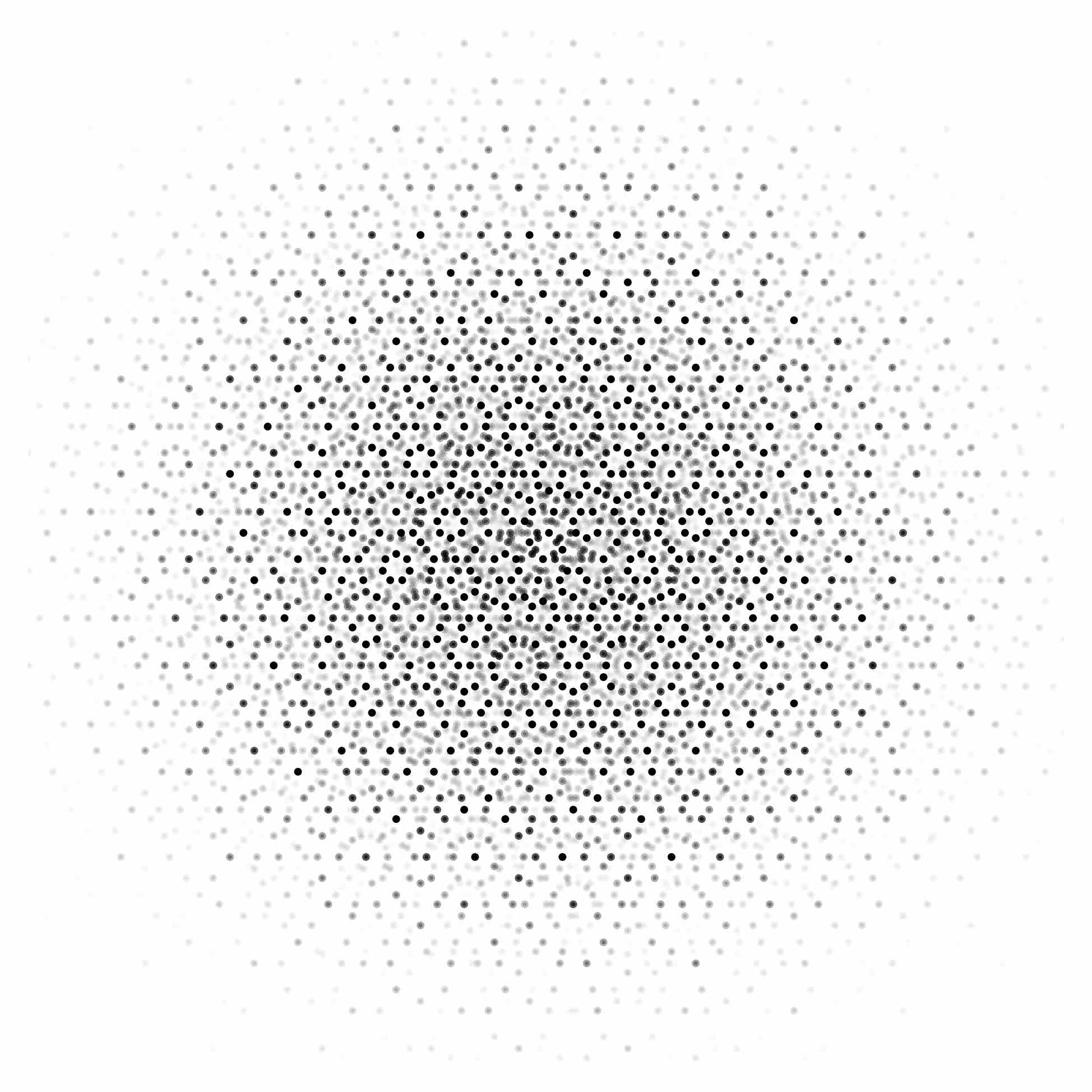} &
\includegraphics[scale=0.1]{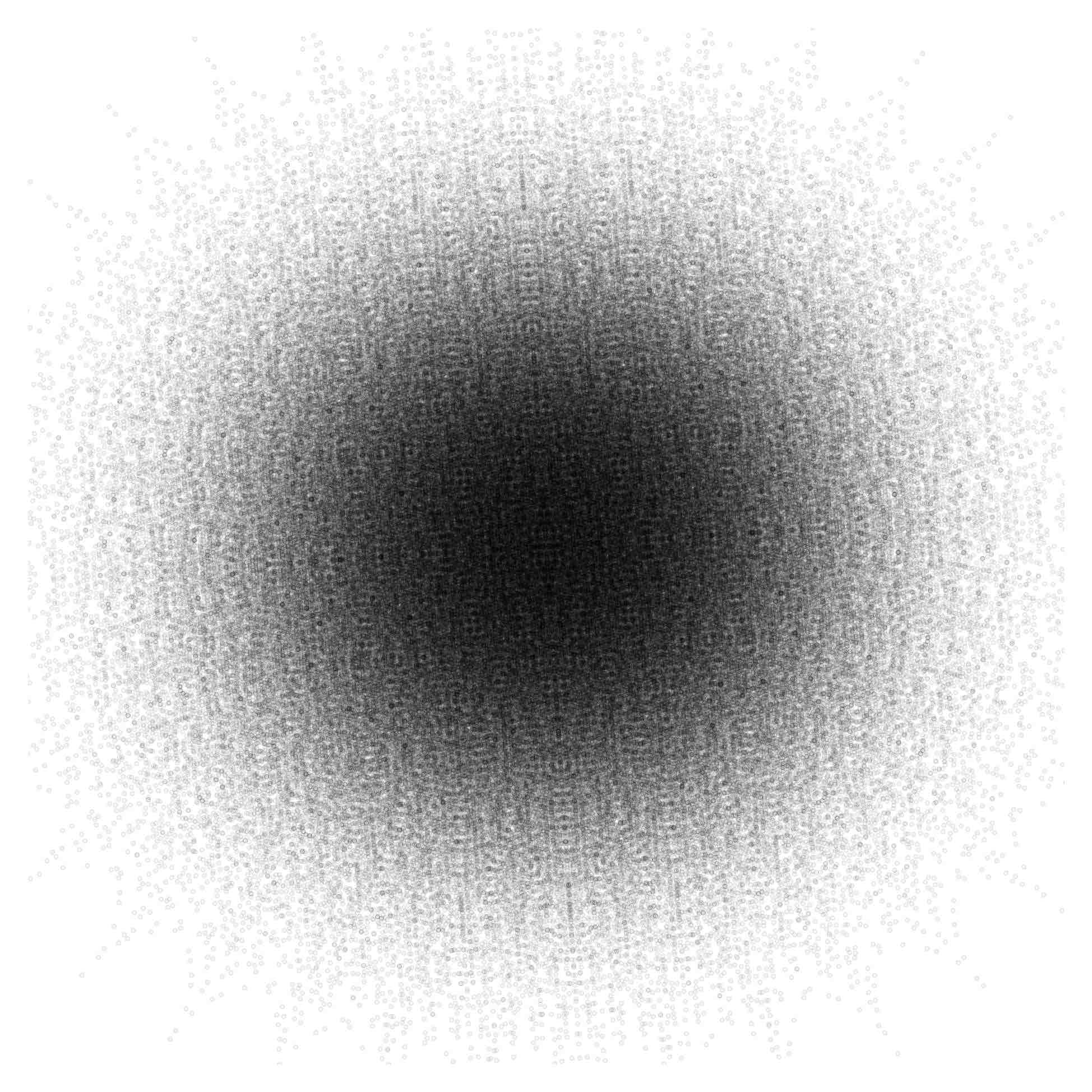}
\end{tabular}
\caption{Plot of
$\sum_{j=0}^{17} \eta_j z^j$ with $\eta_j\in\{\pm1\}$ uniformly and independently,
with $z=e^{2\pi i/7}$ on the left and $z=e^{\sqrt{2} \pi i}$ on the right.  This corresponds to a random isometry with
$\mu$ uniform on $\{\tau_{\pm 1}\circ\rho_\theta\}$.  The distribution of $Y_N$ resembles a Gaussian down to much finer scales when
irrational rotations are included.
}
\label{fig:rational-vs-irrational}
\end{figure}

Our first result shows that, indeed, $\delta_N$ decays super-polynomially
when $\supp\mu$ has elements with
sufficiently irrational rotation part.  To state this result, we say an angle
$\theta\in [0,2\pi]$ satisfies the Diophantine condition \eqref{eq:DCm} if
\begin{equation}\label{eq:DCm}
\tag{D.C.-$m$}
\textit{ for all $p\in \mathbb{Z}$ and large $q\in \mathbb{N}$, }\qquad|\theta - \frac{2\pi p}{q}|\geq q^{-m}.
\end{equation}
We also use $\theta(g)$ to denote the angle of rotation of $g\in \Isom(\mathbb{R}^2)$.

\begin{theorem}\label{thm:main-diophantine}
Let $\mu$ be a measure on $\Isom(\mathbb{R}^2)$ with finite support,
and $Y_N$ be given as in \eqref{def:YN}.
Suppose also that $\mathbb{E}g_{1}(0) = 0$ and one of the following holds with some $m>0$:
\begin{enumerate}
\item There exists $g_1,g_2\in \supp\mu$ such that $\theta(g_1g_2^{-1})$ satisfies \eqref{eq:DCm}.
\item For some $\theta_0$ satisfying \eqref{eq:DCm}, $\theta(g)=\theta_0$ for all $g\in\supp \mu$.
\end{enumerate}
Then for some $c>0$,
$Y_N$ satisfies an LCLT to scale $e^{-c(\log N)^2}$. More precisely, with $\sigma^2 := \frac12 \Expec |Y_0|^2$
we have for any $x_0\in\Real^2$ and $r>\exp(-c(\log N)^2)$ the estimate
\begin{align}
\label{eq:LCLT-main}
|\Prob(Y_N\in B_r(x_0))  - \Prob(\sigma \sqrt{N} Z_1 \in B_r(x_0))| \leq C N^{-3/2} r^2,
\end{align}
where $Z_1$ is the standard Gaussian on $\Real^2$.
\end{theorem}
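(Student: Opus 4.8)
\emph{The plan.} The strategy is the standard Fourier-analytic derivation of a local limit theorem; the role of the irrational rotation is confined to a single step, namely showing that the characteristic function $\Ft{\mu_N}(\xi):=\Expec e^{i\xi\cdot Y_N}$ is super-polynomially small on a super-polynomially long band of frequencies, which is exactly what produces the scale $e^{-c(\log N)^2}$. The error $N^{-3/2}r^2$ in \eqref{eq:LCLT-main}, by contrast, is a feature of the smoothing and is not tied to the rotations.

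\emph{Reduction and low frequencies.} Sandwich $\One_{B_r(x_0)}$ between smooth minorants and majorants $\chi^{\pm}$ whose Fourier transforms decay faster than any power of $\eps|\xi|$, choosing the smoothing scale $\eps\asymp r/\sqrt N$; the resulting error is at most the $\mu_N$-mass of an $O(\eps)$-annulus about $\partial B_r(x_0)$, which is $\lsim r\eps/N\asymp r^2 N^{-3/2}$ — this may be justified either by bootstrapping \eqref{eq:LCLT-main} downward from larger scales or by bounding the annulus mass directly via the same Fourier estimates. After Fourier inversion, \eqref{eq:LCLT-main} then reduces to two estimates: (i) for $|\xi|\le\eps_0$ (a fixed small constant), the Gaussian expansion $\Ft{\mu_N}(\xi)=e^{-\sigma^2 N|\xi|^2/2}\bigl(1+O(|\xi|^2+N|\xi|^4)\bigr)$, which is the routine Edgeworth expansion for a finite-support walk and contributes at most $O(r^2 N^{-2})$ after integration against $\widehat{\chi^{\pm}}$ (in fact the odd cumulants of $\xi\cdot Y_N$ stay bounded in $N$, since each $\Expec_t(\xi\cdot\rho_\psi t)^{2k+1}$ is a mean-zero trigonometric polynomial in the rotation phase $\psi$, so the rotation averaging suppresses them — but this refinement is not needed); and (ii) for $\eps_0\le|\xi|\le\exp(A'(\log N)^2)$, the decay bound $|\Ft{\mu_N}(\xi)|\le\exp(-A(\log N)^2)$, where $A$ may be taken as large as we like at the price of shrinking $c$. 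For $|\xi|\ge\exp(A'(\log N)^2)$ the trivial bound $|\Ft{\mu_N}|\le1$ suffices, since $\widehat{\chi^{\pm}}$ decays so rapidly there.

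\emph{The decay estimate.} Conditioning on the rotation angles $\theta_1,\dots,\theta_N$, writing $S_j=\theta_1+\dots+\theta_j$, $\phi_\theta(\eta)=\Expec\bigl[e^{i\eta\cdot t(g)}\mid\theta(g)=\theta\bigr]$, and using $g(v)=\rho_{\theta(g)}v+t(g)$ with $\rho_\psi$ rotation by $\psi$, one has
$$\Ft{\mu_N}(\xi)=\Expec_{\theta_1,\dots,\theta_N}\ \prod_{j=1}^{N}\phi_{\theta_j}\!\bigl(\rho_{-S_{j-1}}\xi\bigr),$$
so $|\Ft{\mu_N}(\xi)|\le\Expec_{\vec\theta}\prod_j\bigl|\phi_{\theta_j}(\rho_{-S_{j-1}}\xi)\bigr|$. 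Call step $j$ \emph{good} when this factor is $\le1-c_0$; it fails to be good only when $\rho_{-S_{j-1}}\xi$ lies in a thin resonant set, the set of $\eta$ for which $\eta\cdot\bigl(t(g)-t(g')\bigr)\in2\pi\bbZ+O(\sqrt{c_0})$ for all $g,g'$ with $\theta(g)=\theta(g')=\theta_j$. In case (2) the phases are deterministic, $S_{j-1}=(j-1)\theta_0$, and $Y_N=P(e^{i\theta_0})$ for a random integer-coefficient polynomial $P$ of degree $<N$; two outcomes lying within $\delta$ of each other differ by a nonzero integer polynomial $S$ with $\deg S<N$ and $|S(e^{i\theta_0})|<2\delta$, so the concentration of $Y_N$ is controlled by how small $|S(e^{i\theta_0})|$ can be. The content of (ii) in this case is that if $N-O((\log N)^2)$ steps were not good, then the value $\xi\cdot S(e^{i\theta_0})$ would, for every low-height integer polynomial $S$ supported away from the $O((\log N)^2)$ exceptional steps, be pinned within $o(1)$ of a fixed lattice of unit covolume; exhibiting a single admissible $S$ for which $|\xi\cdot S(e^{i\theta_0})|$ lands outside this trap contradicts that, forcing at least $A''(\log N)^2$ good steps and hence (ii). Case (1) is reduced to case (2) by the same conditioning: along $S_1,S_2,\dots$ the walk repeatedly sees the irrational angle $\theta(g_1g_2^{-1})$, and after discarding the $O(\log N)$-step mixing transient of $S_j\bmod2\pi$ the equidistribution supplied by \eqref{eq:DCm} transfers the single-angle estimate to the general finite-support setting.

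\emph{The main obstacle.} The heart of the matter is the Diophantine lower bound: every nonzero integer polynomial $S$ of degree $<N$ and moderate height satisfies $|S(e^{i\theta_0})|\ge\exp(-C(\log N)^2)$ with $C=C(m)$. Its optimality — and the origin of the exponent $(\log N)^2$ — is visible from the explicit family $S(x)=\prod_k(1-x^{q_k})$, where $q_1<q_2<\dots$ are convergent denominators of $\theta_0/2\pi$ with $\sum_k q_k<N$: since $|1-e^{iq_k\theta_0}|\asymp q_{k+1}^{-1}$ while \eqref{eq:DCm} forces $q_{k+1}\lsim q_k^{m}$, when these grow geometrically (as for, e.g., quadratic irrationals satisfying \eqref{eq:DCm}) one fits $\asymp\log N$ factors and the product telescopes down to $\asymp\exp(-c(\log N)^2)$. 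Establishing the lower bound for all admissible $S$, and handling the combinatorics of selecting an $S$ whose support avoids the exceptional-step set, is where essentially all of the work lies; the rest of the argument — the smoothing, the Edgeworth expansion, and the reduction of case (1) to case (2) — is routine.
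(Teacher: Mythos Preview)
Your overall architecture --- Fourier reduction, low/high frequency split, and in case~(2) the factorisation $\Ft{\mu_N}(\xi)=\prod_j\hat\nu(e^{ij\theta_0}\xi)$ followed by ``find one polynomial that escapes the trap'' --- matches the paper. But two of your claimed steps are not right.

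\textbf{The ``main obstacle'' is misidentified, and the claimed lower bound is false.} You assert that the heart of the matter is the universal bound $|S(e^{i\theta_0})|\ge\exp(-C(\log N)^2)$ for every nonzero integer polynomial $S$ of degree~$<N$ and moderate height. This is false: among the $\gsim 3^N$ polynomials with coefficients in $\{-1,0,1\}$ and degree~$<N$, pigeonhole produces a nonzero one with $|S(e^{i\theta_0})|<\exp(-cN)$. Your family $\prod_k(1-x^{q_k})$ shows the scale $\exp(-c(\log N)^2)$ is \emph{attainable}, not that it is a floor. What the argument actually needs --- and what your ``decay estimate'' paragraph correctly gestures at before the final paragraph contradicts it --- is the opposite direction: given $|\xi|\le\exp(c(\log N)^2)$, \emph{construct} a single $p$ of small weight (degree plus $\ell^1$-norm $\le N^{0.1}$, say) with $\Rept(\xi\,p(e^{i\theta_0}))$ bounded away from $\bbZ$. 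The paper calls this the dense polynomial values property and proves it constructively with test polynomials $\ell z^j(z^q-1)^k$, choosing $q$ via pigeonhole and~\eqref{eq:DCm} so that $|z^q-1|\in[\tfrac{1}{2n},\tfrac1n]$. No universal lower bound is invoked.

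\textbf{Case~(1) does not reduce to case~(2).} After conditioning on the rotation sequence, the cumulative angles $S_j$ are not integer multiples of any single $\theta_0$, so the polynomial-in-$e^{i\theta_0}$ structure is lost and your transfer by ``equidistribution of $S_j\bmod 2\pi$'' has no content. The paper handles case~(1) by a genuinely different route (after Varj\'u): one bounds the norm of the operator $\hat T_r f=\Expec_{g}e^{i\langle\cdot,v(g)\rangle}f(\rho_{\theta(g)}\cdot)$ on $L^2(\Sphere^1_r)$, showing $\|\hat T_r\|\le 1-c'/n^2$ whenever $\supp(\mu\ast\mu^{-1})^{\ast n}$ contains pure translations at every scale down to $r^{-1}$. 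Those translations are manufactured from the commutator $[g_1,g_2]=\tau_a$ and conjugation $g_1\tau_u g_1^{-1}=\tau_{e^{i\theta}u}$, which together realise exactly the values $a\,p(e^{i\theta(g_1g_2^{-1})})$ for integer polynomials $p$; the $\DPV$ property for $\theta(g_1g_2^{-1})$ then supplies translations at all required scales. This spectral/commutator step is the actual content of case~(1), and it is absent from your proposal.
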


\begin{figure}[t]
\begin{tabular}{cc}
\includegraphics[scale=0.1]{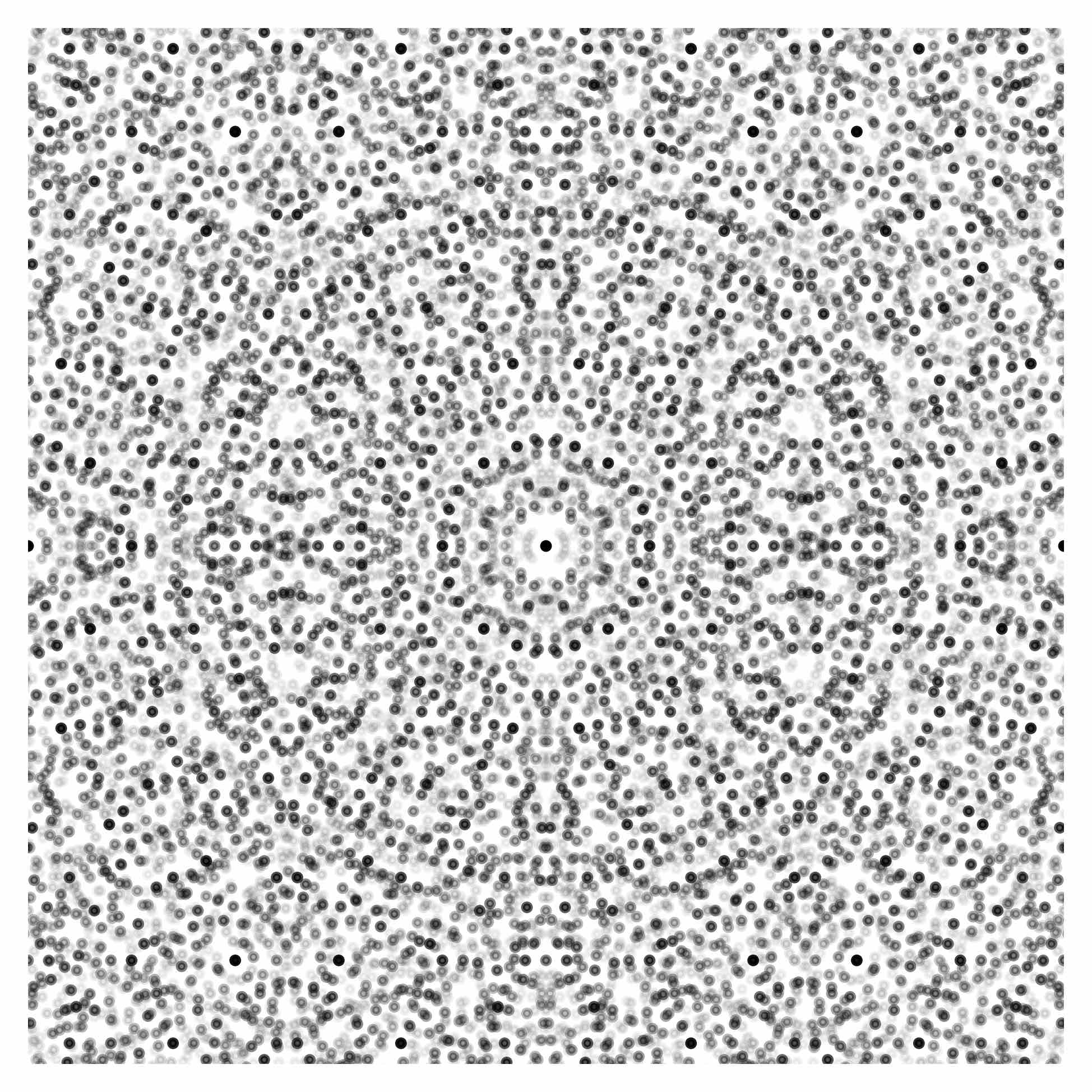} &
\includegraphics[scale=0.1]{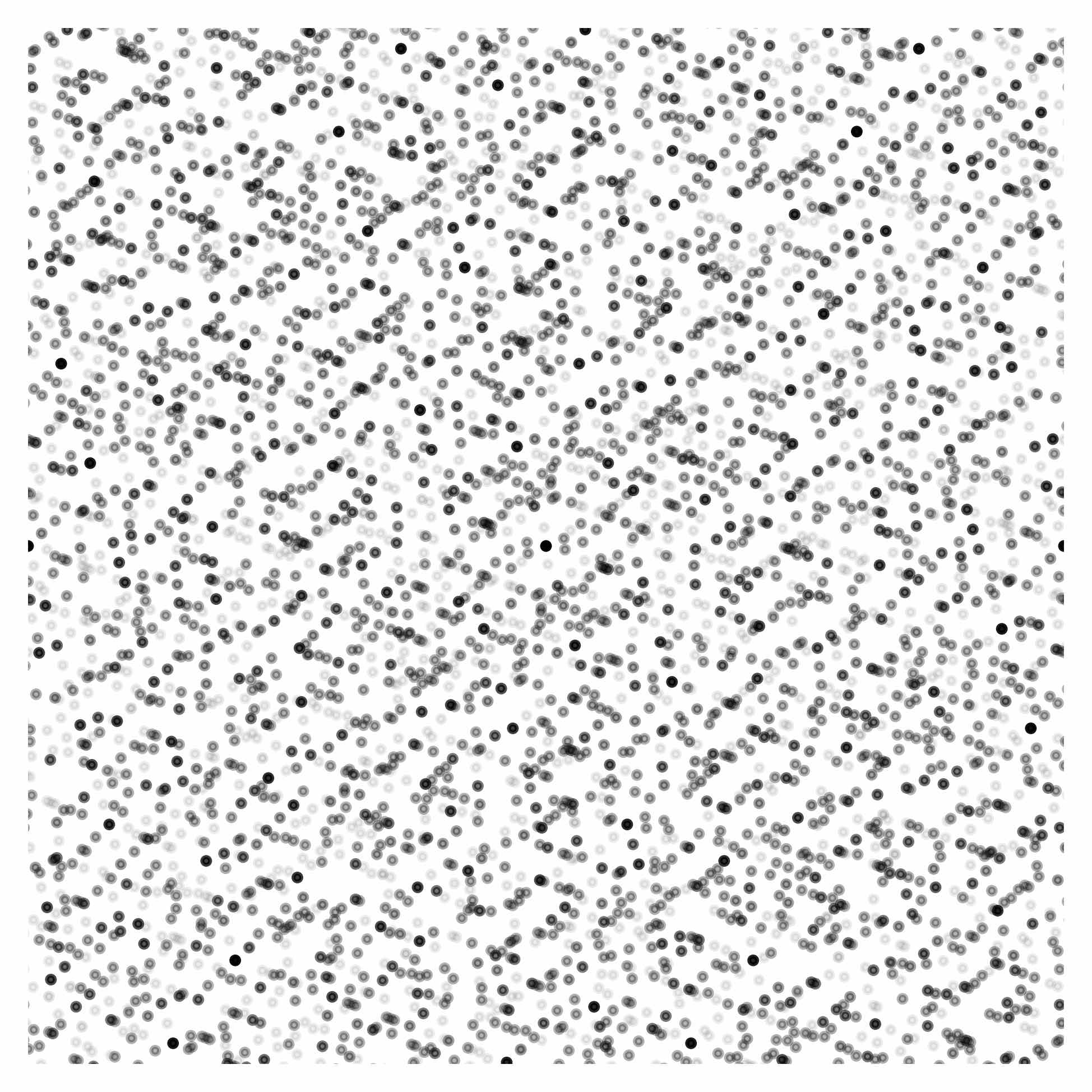}
\end{tabular}
\caption{The point distributions for $Y_N\cap [-1,1]^2$ when $\mu$ is uniformly supported on the symmetric set
$\{\rho_\theta,\rho_{-\theta},\tau_1,\tau_{-1}\}$ (left) and the asymmetric set $\{\rho_\theta,\tau_1,\tau_{-1}\}$
(right).  Darker dots indicate points with higher multiplicity.
The image above was generated with $N=13$ and $\theta$ such that $e^{i\theta} = \frac15(3+4i)$.  For the symmetric case, $\Prob(Y^{\rm sym}_N=0)\approx 0.02$, whereas in the asymmetric case we have $\Prob(Y^{\rm asym}_N=0)\approx 0.0002$.  The asymmetric
$Y^{\rm asym}$ has a noticeably more uniform distribution.}
\end{figure}

In the second case of Theorem~\ref{thm:main-diophantine} above, $Y_N$ has the law
of the random polynomial sum $\sum_{j=0}^{N-1} \sigma_j z^j$, where $z=e^{i\theta_0}$
and $\sigma_j$ are independently sampled copies of $Y_1\overset{d}{=}g_1(0)$.

One might expect that when $\supp\mu$ generates a group of exponential growth,
then $\delta_N$ might be taken to be exponentially small with $N$.  However, there
is a deeper group-theoretic obstruction which rules this out when $\mu$ is symmetric,
i.e. $\Prob(g\in S) = \Prob(g^{-1}\in S)$ for all $S\subseteq\Isom(\Real^2)$.

We can illustrate this obstruction best in the case that $\mu$ is uniform on $\{\rho_{\pm \theta},\tau_{\pm 1}\}$.  In this case, the group generated by
$\supp \mu$ acts on $(f,\ell)\in\bbZ^\bbZ\times \bbZ$ as follows:
\begin{align*}
\rho_{\pm\theta} (f,\ell)  &= (f,\ell\pm 1) \\
\tau_{\pm 1} (f,\ell)  &= (f\pm e_{\ell}, \ell), \text{ where $e_{\ell}(k) = \delta_{k=\ell}$.}
\end{align*}
Then setting $(f_N,\ell_N) := (g_N\cdots g_1)(\mbf{0},0)$ we have the identity
\[
Y_N = (g_1...g_N)(0) = \sum_{j=-\infty}^\infty f_N(j) e^{ij\theta}.
\]
The random variable $\ell_n$ performs a random walk on $\bbZ$,
and thus we have $\supp f_{N}$ is contained in $[-k,k]$ with probability on the order $\exp(-cN/k^2)$.
For each $j\in[-k,k]$, $f_N(j)$ is the endpoint of a random walk on $\bbZ$ having at most $N$ steps,
so is identically zero with probability\footnote{Ignoring parity concerns.} at least $N^{-1/2}$.  Taking $k=N^{1/3}$,
we have that $f_N\equiv 0$ with probability at least
$N^{-cN^{1/3}} = \exp(-cN^{1/3}\log N)$.  Hence, for this specific $\mu$,
one cannot have a LCLT to scale $\delta_N < \exp(-cN^{1/3}\log N)$.

In a more group-theoretic language, the obstruction in the example above comes from
the fact that there is a surjective homomorphism from the the wreath product $\bbZ\wr\bbZ$
to the group with generators $\{\rho_{\pm\theta},\tau_{\pm v}\}$.
A result of Saloff-Coste and Pittet~\cite[Theorem 3.11]{pittet2002random}
shows that the random walk $h_N:= g_1\cdots g_N$ on the Cayley graph of $\bbZ\wr\bbZ$ satisfies
\[
\Prob(h_N = \id) \geq \exp(-cN^{1/3}(\log N)^{2/3}).
\]
In general, the group generated by any $\mu$ on $\Isom(\mathbb{R}^2)$ is amenable (since the
commutator subgroup of $\Isom(\mathbb{R}^2)$ is the group of translations which is abelian).
Thus, when $\mu$ is symmetric, a classical result of Kesten~\cite{kestenbanach,kestensymmetric}
shows $\lim_{N\to\infty} \Prob(h_{2N}=\id)^{1/2N} =1$. Hence, for \textit{any}
symmetric $\mu$, one cannot have an LCLT down to exponentially small scales.

Our next result establishes a local limit theorem down to the scale $\exp(-cN^{1/3}/\log N)$ for any symmetric $\mu$ such that $\supp\mu$ has an element which rotates by an irrational angle with a rational cosine.
\begin{theorem}
\label{thm:main-symmetric}
Let $\mu$ be a symmetric probability measure with finite support, and $Y_N$ be as in \eqref{def:YN}.
If there exists $g\in \supp\mu$ with $\cos\theta(g)\in\bbQ \setminus\frac12\bbZ$,
then $Y_N$ satisfies an LCLT down to scale
$\exp(-cN^{1/3}/(\log N)^2)$.  That is, the estimate~\eqref{eq:LCLT-main} holds for all $r>\exp(-cN^{1/3}/(\log N)^2)$.
\end{theorem}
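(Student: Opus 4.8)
The plan is to pass to the Fourier side and bound the characteristic function $\phi_N(\xi) := \Expec e^{i\xi\cdot Y_N}$ uniformly on the relevant annulus. The standard reduction (carried out for Theorem~\ref{thm:main-diophantine}) shows that an LCLT to scale $\delta_N$ follows once we establish decay of the form $|\phi_N(\xi)| \lesssim N^{-A}$ for every $|\xi|$ between roughly $\sqrt{\log N}$ and $\delta_N^{-1}$, together with the near-origin Gaussian comparison, which is unaffected by symmetry or rationality hypotheses. So the real content is an upper bound on $|\phi_N(\xi)|$ for $\xi$ up to exponentially large scale $\exp(cN^{1/3}/(\log N)^2)$.

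The key observation is that $\phi_N$ factors through the rotation structure. Conditioning on the sequence of rotation angles, $Y_N$ becomes a sum $\sum_j w_j \sigma_j$ where each $w_j$ is a unit complex number determined by the partial products of rotations and $\sigma_j$ are the (bounded, mean-zero) translation increments; hence
\[
|\phi_N(\xi)| \leq \Expec \prod_{j=0}^{N-1} \bigl| \psi(\,\xi \cdot w_j\,)\bigr|,
\]
where $\psi$ is the characteristic function of a single translation increment and the outer expectation is over the rotation walk. Because there is a distinguished generator $g$ with $\cos\theta(g) = a/b \in \bbQ\setminus\frac12\bbZ$, the angles appearing all lie in the group generated by $\theta(g)$ (and finitely many others), and the unit vectors $w_j$ are algebraic numbers with controlled denominators: after $k$ applications of $\rho_{\theta(g)}$, the relevant exponential $e^{ik\theta(g)}$ is a ratio of Gaussian (or, after clearing, cyclotomic-type) integers with denominator dividing $b^k$. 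The crucial number-theoretic input—the analogue of the Diophantine estimate in Theorem~\ref{thm:main-diophantine}—is that for $\cos\theta \in \bbQ\setminus\frac12\bbZ$, the multiplicative order phenomena force $e^{ik\theta}$ to avoid any fixed rational point $p/q$ by at least $q^{-Ck}$ (roughly, $b^{-k}$-separation). This is where the hypothesis $\cos\theta(g)\notin\frac12\bbZ$ is used: it guarantees $\theta(g)$ is an irrational multiple of $\pi$ (Niven's theorem) and that the denominators grow geometrically, giving the quantitative separation.

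From this separation estimate, I would argue as follows: along a typical trajectory of the rotation random walk, the values $\arg(w_j) \bmod 2\pi$ equidistribute, but more to the point, for a fixed large $|\xi|$, the number of indices $j \le N$ for which $|\psi(\xi\cdot w_j)|$ is \emph{not} bounded away from $1$ — i.e. for which $\xi\cdot w_j$ is within $O(1)$ of a ``bad'' configuration — is, with overwhelming probability, at most $O(N^{2/3}(\log N)^{c})$, because avoiding badness requires $w_j$ to land in a window of size $\sim 1/|\xi|$ near a rational of denominator $\lesssim |\xi|$, and the geometric growth of denominators means only $O(\log|\xi|)$ distinct values of $j \bmod (\text{relevant period})$ can do so, while the random walk $\ell_N$ on $\bbZ$ spends time $\lesssim \sqrt{N}$ at each site; balancing gives the $N^{1/3}$ exponent. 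On the complementary (good) indices each factor is $\le 1-\eta$ for a fixed $\eta>0$, so the product is $\le \exp(-\eta \cdot \#\{\text{good }j\})$. The remaining step is a large-deviation estimate showing $\#\{\text{good }j\} \gtrsim N - N^{2/3}(\log N)^c$ with probability $1 - \exp(-cN^{1/3}/(\log N)^2)$, which combined with the trivial bound $|\phi_N(\xi)|\le 1$ on the exceptional event yields the claimed bound on $|\phi_N(\xi)|$ throughout the annulus.

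The main obstacle will be making the counting argument for bad indices uniform in $\xi$ over the full exponential range $|\xi| \le \exp(cN^{1/3}/(\log N)^2)$: one must track how the denominator-growth rate of $e^{ik\theta(g)}$ interacts with the random fluctuations of the rotation walk $\ell_N$, and in particular control the worst case where $\xi$ is resonant with the dominant denominators. I expect this requires a dyadic decomposition over the scale of $|\xi|$, a careful bookkeeping of which residues $j \bmod q$ contribute at each scale, and a union bound over $\xi$ that the $\exp(-cN^{1/3}/(\log N)^2)$ failure probability is engineered to absorb — the extra $(\log N)^2$ in the denominator of the exponent (versus the heuristic $N^{1/3}\log N$ obstruction) is precisely the slack lost in this union bound and in converting trajectory-wise estimates to the expectation.
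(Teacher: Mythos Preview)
Your approach diverges substantially from the paper's and, as written, has genuine gaps.

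\textbf{What the paper actually does.} The proof of Theorem~\ref{thm:main-symmetric} never conditions on the rotation walk or counts ``bad indices.'' Instead it combines two abstract ingredients. First, Proposition~\ref{prp:exp-Rbds} shows that when $\cos\theta\in\bbQ\setminus\frac12\bbZ$ the angle $\theta$ satisfies the dense polynomial value property $\DPV(n,R)$ for $R<\exp(cn/(\log n)^2)$; the proof is a pigeonhole on polynomials with coefficients in $[0,a)$ together with a \emph{resultant} lower bound $|q(z)|\ge a^{-\deg q/2}$ coming from the minimal polynomial $az^2-bz+a$. Second, Proposition~\ref{prp:DPV-to-hf} converts $\DPV(n,R)$ into an $L^2$ spectral gap for the transfer operator $\hat T_r$ on $L^2(\Sphere^1_r)$: one shows that short words in $\supp\mu$ generate pure translations at all scales down to $R^{-1}$, and this forces $\|\hat T_r\|\le 1-c/n^2$ by a Varj\'u-type almost-invariance argument. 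Iterating $N$ times and setting $n=N^{1/3}$ balances the gap $\exp(-cN/n^2)$ against the scale $\exp(cn/(\log n)^2)$. The $N^{1/3}$ arises from this $n^{-2}$ spectral loss, not from any local-time balancing.

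\textbf{Gaps in your proposal.} First, the factorization $|\phi_N(\xi)|\le \Expec\prod_j|\psi(\xi\cdot w_j)|$ is only available when each generator has the form $\tau_v\circ\rho_\theta$ with the $v$'s conditionally independent and mean-zero given the rotations; for a general finitely supported symmetric $\mu$ (which is what the theorem asserts) rotations and translations can be entangled in each generator, and there is no such clean product. The paper's operator approach is what handles the general case. Second, your heuristic for the exponent is off: you write that ``only $O(\log|\xi|)$ values of $j$'' can be bad and the walk ``spends $\lesssim\sqrt N$ at each site,'' but that product is $\sqrt N\log|\xi|$, not $N^{2/3}$, and in any case the correct balancing is that bad sites are $\log r$-sparse, a symmetric walk needs time $\sim(\log r)^2$ to traverse a window of that length, so one gets $\sim N/(\log r)^2$ good hits and must take $\log r\sim N^{1/3}$. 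Third, your number-theoretic input (``$e^{ik\theta}$ avoids $p/q$ by $q^{-Ck}$'') is not what is needed; what one actually uses is either the resultant bound above or, in the paper's asymmetric analysis, a $p$-adic argument (Proposition~\ref{prp:main-five}) showing that $r\cos(j\theta+\varphi)$ cannot stay within $O(1)$ of an integer for more than $O(\log r)$ consecutive $j$. Neither of these is the separation statement you describe, and without one of them the counting of bad indices does not get off the ground.
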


In the absence of symmetry, we are not aware of any reason that $\delta_N$ should not decrease as
$\exp(-cN)$.  We are unable to prove any result down
to an exponentially fine scale, but we are able to beat the $\exp(-N^{1/3})$ scale in two special examples
\begin{enumerate}
\item \textbf{Littlewood polynomials}: The polynomial case is that $\mu$ is uniformly distributed on $\{\tau_1\rho_\theta, \tau_{-1}\rho_\theta\}$.  In this case, $Y_N$ has the distribution of the random polynomial $\sum_{j=0}^{N-1} \sigma_j z^j$ with $z=e^{i\theta}$
and $\sigma_j$ independent and uniform samples from $\{\pm1\}$.
\item \textbf{Asymmetric wreath product}: We say that $\mu$ generates an asymmetric wreath product if
$\supp\mu = \{\rho_\theta,\rho_{-\theta},\tau_1,\tau_{-1}\}$ with $\Prob(g=\tau_1)=\Prob(g=\tau_{-1})$, but $\Prob(g=\rho_\theta) \not= \Prob(g=\rho_{-\theta})$.
\end{enumerate}
\begin{theorem}
\label{thm:main-asymmetric}
Let $\mu$ be as above (either a Littlewood polynomial or an asymmetric wreath product),
with $\cos\theta \in \bbQ\setminus\frac12\bbZ$ and $Y_N$ be as in \eqref{def:YN} for this $\mu$.
Then the LCLT~\eqref{eq:LCLT-main} holds for any $r > \exp(-c\sqrt{N}/\log N)$.
\end{theorem}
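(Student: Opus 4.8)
The plan is to follow the Fourier-analytic strategy that surely underlies the proofs of Theorems~\ref{thm:main-diophantine} and~\ref{thm:main-symmetric}: writing $\phi_N(\xi) = \Expec e^{i\xi\cdot Y_N}$ for the characteristic function, the LCLT to scale $\delta_N$ reduces (via a smoothing/Fourier-inversion argument, cf.\ the Berry--Esseen method) to two estimates — a \emph{central} estimate showing $\phi_N(\xi)$ is close to the Gaussian $e^{-\sigma^2 N|\xi|^2/2}$ for $|\xi| \lesssim N^{-1/2+o(1)}$, which is standard, and a \emph{tail} estimate showing $|\phi_N(\xi)|$ decays fast enough for all $\delta_N^{-1} \gtrsim |\xi| \gtrsim N^{-1/2+o(1)}$ that the contribution of these frequencies to the inversion integral is $o(N^{-3/2}r^2)$. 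For the asymmetric wreath product we have the explicit description $Y_N = \sum_j f_N(j) e^{ij\theta}$ from the excerpt, so $\xi\cdot Y_N$ is determined by $f_N$ together with the fixed unit vectors $\pm(\cos j\theta,\sin j\theta)$; for the Littlewood case $Y_N = \sum_{j=0}^{N-1}\sigma_j e^{ij\theta}$ directly. In both cases $\phi_N(\xi)$ factors, conditionally on the rotation walk $\ell_n$, into a product over the coordinates $j$, each factor being the characteristic function of a one-dimensional random walk; the key point is that $\cos\theta\in\bbQ\setminus\frac12\bbZ$ forces, via the algebraic results connecting fine-scale behavior to integer polynomials on the unit circle, that the quantities $\xi\cdot(\cos j\theta,\sin j\theta)$ are well-separated modulo $2\pi$ across a range of $j$ of size $\sim\log(1/|\xi|)$ — this is where the improvement over the symmetric case comes from, because the asymmetry of $\mu$ means each one-dimensional factor has a genuine drift-type decay rather than the slower decay of a symmetric walk at $0$.

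The main structural step is therefore: for $|\xi|$ in the relevant window, identify a set $J(\xi)\subseteq\{0,\dots,N-1\}$ of coordinates — of size $|J(\xi)| \gtrsim \min(N, \log(1/|\xi|))$ — on which the ``phases'' $j\theta$ paired against $\xi$ are $\Omega(1)$-separated from $0$ mod $2\pi$, using that $2\cos\theta$ is a rational $p/q$ with $q\geq 3$, so that the algebraic integers $e^{ij\theta}$ generate a lattice whose relevant points avoid small neighborhoods of the relevant directions. On $J(\xi)$ each conditional factor of $\phi_N$ contributes a bound $\leq 1-c$ (for the asymmetric wreath product, using that $\Prob(g=\rho_\theta)\neq\Prob(g=\rho_{-\theta})$ gives an imaginary part to the step distribution that prevents the factor from touching modulus $1$; for Littlewood, $\cos(\xi\cdot e_j) \leq 1-c$ directly). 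Summing over the rotation walk, $|\phi_N(\xi)| \leq \Expec\bigl[(1-c)^{\#\{n: \ell_n\in J(\xi)\}}\bigr]$, and a standard occupation-time / large-deviation estimate for the simple random walk $\ell_n$ on $\bbZ$ shows that $\#\{n\leq N: \ell_n\in J(\xi)\}\gtrsim N / (\mathrm{diam}\,J(\xi)) \gtrsim N/\log(1/|\xi|)$ except on an event of probability $\exp(-cN/\log(1/|\xi|))$ — wait, more carefully, one wants the time spent in an interval of length $L$ to be $\gtrsim N/L$ typically, giving $|\phi_N(\xi)| \lesssim \exp(-cN/\log(1/|\xi|))$. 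Setting this $\ll |\xi|^{3}$ (to control the inversion integral after the $r^2$ and $N^{-3/2}$ normalizations, with room to spare over a dyadic sum in $|\xi|$) requires $N/\log(1/|\xi|) \gtrsim \log(1/|\xi|)$, i.e.\ $\log(1/|\xi|) \lesssim \sqrt{N}$, which is exactly the claimed scale $\delta_N = \exp(-c\sqrt N/\log N)$ once the logarithmic losses in the separation estimate and the occupation bound are tallied.

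For the Littlewood case there is no rotation walk to average over — the structure is simpler — and instead one directly estimates $|\phi_N(\xi)| = \prod_{j=0}^{N-1}|\cos(\Rpt(\xi e^{-ij\theta}))|$ (identifying $\Real^2\cong\bbC$), so the whole problem is to show this product is $\leq \exp(-cN/\log(1/|\xi|))$ for $|\xi|$ in the window. This is where one invokes the connection to integer polynomials: writing $z = e^{i\theta}$ with $2\cos\theta = p/q$, $q\geq 3$, the partial sums $\sum_{j<k} z^j$ and the arithmetic of $\bbZ[z]$ control how well $\xi$ can be simultaneously close to being orthogonal to many of the $z^j$; a pigeonhole/Diophantine argument in the ring $\bbZ[2\cos\theta]$ gives that no $\xi$ with $|\xi| \geq \delta_N$ can have more than a $O(\log(1/|\xi|))$-sparse set of $j$'s with $\cos(\Rpt(\xi z^{-j}))$ near $1$, which is the needed input.

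The step I expect to be the main obstacle is the \textbf{separation estimate}: showing quantitatively that $\cos\theta\in\bbQ\setminus\frac12\bbZ$ forces the values $\{\Rpt(\xi z^{-j})\bmod 2\pi : 0\le j < \log(1/|\xi|)\}$ to avoid a neighborhood of $0$ for all but $O(1)$ values of $j$, with the neighborhood size and the exceptional count both only costing logarithmic factors. This is a genuine Diophantine/algebraic-number-theory input about how badly the algebraic integer $2\cos\theta$ can be approximated, and getting the dependence right (it is what produces the $(\log N)$ in the denominator of the exponent, and what would need to be improved — and provably cannot be, past $\exp(-cN^{1/2})$, by the group-theoretic lower bound for the asymmetric examples) is the technical heart of the argument. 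The occupation-time bound for the random walk $\ell_n$ and the Fourier-inversion bookkeeping are routine by comparison.
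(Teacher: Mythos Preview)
Your overall Fourier strategy and the Littlewood computation are on target, and you correctly isolate the ``separation estimate'' as the crux. Two genuine gaps, though.

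\textbf{The role of the asymmetry is misidentified.} In the asymmetric wreath product the translation steps are still $\tau_{\pm1}$ with \emph{equal} probability, so conditionally on the rotation history the characteristic function factor at step $n$ is a real cosine --- there is no ``imaginary part to the step distribution.'' What the asymmetry $\Prob(\rho_\theta)\neq\Prob(\rho_{-\theta})$ actually buys is that the rotation walk $j_n$ (with $\alpha_n=j_n\theta$) has \emph{nonzero drift}. The paper exploits this: over a block of $\sqrt N$ steps, $j_n$ sweeps through an interval of integers of length $\sim\sqrt N$, and so (by the separation estimate) hits a good $j$ with probability bounded below; then with positive probability the next move is a translation, yielding a factor $\le 1-c$. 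Your occupation-time heuristic $\#\{n\le N:\ell_n\in J(\xi)\}\gtrsim N/\operatorname{diam}J(\xi)$ is not correct for a \emph{symmetric} walk on $\bbZ$ (larger intervals are occupied more, not less), and in any case this is not how the argument runs: it is the drift, not an occupation estimate, that drives the asymmetric wreath case.

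\textbf{The mechanism behind the separation estimate.} You leave this as a ``pigeonhole/Diophantine argument in $\bbZ[2\cos\theta]$,'' but the paper's argument is more specific and rather different in flavor. Writing $\cos\theta=b/2a$ with $a\ge 2$, the sequence $y_j=r\cos(j\theta+\varphi)$ satisfies the linear recurrence $y_{j+1}+y_{j-1}=(b/a)y_j$. If $y_j$ were within $1/(4ab)$ of integers $m_j$ on a long stretch, those integers would satisfy $am_{j+1}-bm_j+am_{j-1}=0$ exactly; for any prime $p\mid a$ this forces $v_p(m_j)$ to grow linearly in the distance from the boundary of the stretch, so $|m_j|\ge p^{K}$ after $K$ steps --- contradicting $|m_j|\le r$ once $K\gtrsim\log r$. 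In other words, the transfer matrix is $p$-adically hyperbolic, and this is what gives: in every window of $C\log r$ consecutive $j$'s there is some $j'$ with $d(r\cos(j'\theta+\varphi),\bbZ)>c$. For Littlewood this immediately yields $\gtrsim N/\log r$ good factors and hence $|\phi_N(\xi)|\le\exp(-cN/\log|\xi|)$, matching your target bound; but the mechanism is arithmetic rigidity of the recurrence, not a pigeonhole in the value set.
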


In the next subsection we give an overview of the main ideas of the proof, and conclude the introduction with a discussion of related
works in Section~\ref{sec:related}.

\subsection{Commentary on the proof}
\label{sec:overview}
To prove a local limit theorem to scale $\delta$ it suffices
(see Lemma~\ref{lem:llerr}) to prove the following estimate on the characteristic function of $Y_N$:
\begin{equation}
\label{eq:FT-bd}
\int_{|\xi| < \delta^{-10}}
|\Expec e^{-i\langle \xi,Y_N\rangle}
- e^{-\frac12 \sigma^2 N|\xi|^2}|
\diff \xi = o(N^{-1}),
\end{equation}
where $\sigma^2 := \frac12 \Expec |Y_0|^2$.
The low frequency part, say $|\xi|\ll 1$, can be estimated using standard arguments in probability theory, and
does not rely on any diophantine assumption on any rotation angles.

In contrast, the high frequency regime is much more delicate. An illustrative example is the
case that $g$ is uniformly sampled from the isometries $x\mapsto e^{i\theta}x\pm 1$.  Setting $z=e^{i\theta}$, $Y_N$ has the same
law as the random polynomial $p(z) = \sum_{j=0}^{N-1} \sigma_j z^j$ with $\sigma_j\in\{\pm1\}$ independently.
A simple Fourier-analytic calculation shows that the characteristic function
evaluated at $\xi=r e^{i \varphi}$ has the form
\begin{align}
\label{eq:LP-FT}
\Expec e^{-i\langle \xi,Y_N\rangle} = \prod_{j=1}^N \cos(r\cos(j\theta+\varphi)).
\end{align}
The values $\{\cos(j\theta+\varphi)\}_{j=1}^N$ are the real parts of $\{e^{i(j\theta+\varphi)}\}_{j=1}^N$, which are spaced by about $N^{-1}$ on the unit circle.  The spacing between almost all adjacent values of $\cos(j\theta+\varphi)$ is therefore typically around $N^{-1}$, and the minimal spacing is
about $N^{-2}$ near $\pm 1$.
Using only this metric information, it is impossible to rule out the possibility that
$\{\cos(j\theta+\varphi)\}_{j=1}^N$ is contained in an arithmetic progression of spacing $N^{-2}$.
If this were the case, the product above could be identically $1$ for frequencies of order $N^2$.  Therefore, to obtain decay of the characteristic function for frequencies much larger than $N^2$, it is necessary to use additional information.

Our main idea is that we can directly rule out additive structure of the set
$\{\Rept(e^{i\varphi}z^j)\}_{j=1}^N$ in terms of a question about the values of $p(z)$
for general polynomials of low-weight.  This reformulation allows us to use simple algebraic arguments, and is explained below in Section~\ref{sec:DPV-xp}.
The second idea, which applies in the special setting of Theorem~\ref{thm:main-asymmetric}, is based on an arithmetic argument involving a recurrence for $\cos(j\theta+\varphi)$ and is discussed in Section~\ref{sec:arithmetic}.

\subsubsection{Polynomial values of $e^{i\theta}$}
\label{sec:DPV-xp}
The first idea, which we employ both in the proof of Theorem~\ref{thm:main-diophantine} and in
Theorem~\ref{thm:main-symmetric}, is that $\cos(j\theta+\varphi)=\Rept(e^{i\varphi}z^j)$
cannot lie very close to some lattice $\delta\bbZ$ if
many simple linear combinations of $\cos(j\theta+\varphi)$ produce values in the range $[\delta/10, \delta/2]$.
This is captured in the following definition.
\begin{definition}[Dense polynomial values]
\label{def:DPV}
The angle $\theta\in[0,2\pi)$ satisfies the \emph{dense polynomial value property}
$\DPV(n,R)$ if the set
\[
\Bigg\{\log \Big(\sum_{j=0}^D c_j e^{ij\theta}\Big) \mid  c_j\in\bbZ, \,\,\sum_{j=0}^D (1+|c_j|) \leq n\Bigg\}
\]
forms a $0.1$-net in $[-\log R, 0]\times[0,2\pi]\subset \bbC$.
We also define
\[
R_{\DPV}(n,\theta) := \sup \{R\in\bbR \mid \theta \textrm{ satisfies }
\DPV(n,R) \}.
\]
\end{definition}
Note that a simple \textit{necessary} condition for the LCLT to hold to scale $\delta_N$ is that $R_{\DPV}(N,\theta) \gsim \delta_N^{-1}$.
What we show is that this is not too far from also being a \textit{sufficient}
condition.  Indeed, Theorem~\ref{thm:main-diophantine} follows from the following super-polynomial bound on $R_{\DPV}$ for irrational angles.
\begin{restatable}{proposition}{superpoly}
\label{prp:super-poly}
If $\theta$ satisfies the condition ~\eqref{eq:DCm},
then there exists $c>0$ such that $\theta$ satisfies $\DPV(n,R)$ for all sufficiently large $n$ and $R < \exp(c(\log n)^2)$.  That is,
for all sufficiently large $n$
\begin{align}
\label{eq:R-lbd}
R_{\DPV}(n,\theta) \geq \exp(c (\log n)^2).
\end{align}
\end{restatable}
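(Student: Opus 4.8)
The plan is to verify the combinatorial property of Definition~\ref{def:DPV} directly, reducing the covering of the box $[-\log R,0]\times[0,2\pi]$ to a one‑dimensional problem about the modulus $|P(z)|$ and then realizing every modulus scale by a product of binomials $1-z^d$. First I would observe that the argument is cheap: since $|z^j|=1$, replacing a candidate $P$ by $z^{c_0}P$ rotates $\log P(z)$ by $ic_0\theta$ without changing its real part, and $\{c\theta\bmod 2\pi:0\le c\le J_0\}$ is $0.03$‑dense on the circle for a constant $J_0=J_0(\theta)$. Thus it suffices to produce, for every $t\in[0,\log R]$, a polynomial $P\in\bbZ[x]$ with $\sum_j(1+|c_j|)\le n/2$ and $\bigl|\log|P(z)|+t\bigr|<0.05$; an appropriate monomial $z^{c_0}$ then brings $\log(z^{c_0}P)(z)$ within $0.1$ of the prescribed point of the box.

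Next I would use the binomial powers $(1-z^d)^a$, for $d$ with $\|d\theta\|_{2\pi}<\pi/3$, as building blocks. Writing $L(d):=-\log|1-z^d|=-\log(2\sin(\tfrac12\|d\theta\|_{2\pi}))>0$, one has $\log|(1-z^d)^a|=-aL(d)$, while $(1-z^d)^a$ has degree $ad$ and coefficient‑sum at most $2^a$; products of such powers add the degrees, multiply the coefficient‑sums sub‑multiplicatively, and add the $L$‑values. The Diophantine hypothesis enters through the denominators $q_k$ of the continued fraction of $\theta/2\pi$: \eqref{eq:DCm} gives $q_{k+1}\le Cq_k^{m-1}$, hence $L(q_k)=\log q_{k+1}+O(1)\le(m-1)\log q_k+O(1)$, the largest convergent $q_K\le D$ satisfies $\log q_K\ge\tfrac1{m-1}\log D-O(1)$, and $\{d\theta\bmod 2\pi:d\le D\}$ is $O(D^{-1/(m-1)})$‑dense.

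With $D:=n^{2/3}$, I would next show that the achievable values $\{L(d):1\le d\le D,\ \|d\theta\|_{2\pi}<\pi/3\}$ form an $O(1)$‑net of an interval $[\,0.03,\ \Lambda\,]$ with $\Lambda=\log q_K\gtrsim_m\log n$: the upper part comes from the dilates $jq_{k-1}$ ($1\le j\le a_k$, each $\le q_k\le D$), for which $\|jq_{k-1}\theta\|_{2\pi}=j\|q_{k-1}\theta\|_{2\pi}\asymp j/q_k$, so $L(jq_{k-1})=L(q_{k-1})-\log j+o(1)$ is a chain of $L$‑values with gaps $\le\log 2$ spanning essentially $[\log q_{k-1},\log q_k]$, and consecutive chains overlap up to bounded error; the lower part is plain equidistribution at bounded $d$. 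From this net I would select a ladder $\lambda_1<\mu_0<\cdots<\mu_M$ with $\lambda_1<0.05$, $\mu_0\in[0.1,0.4]$, bounded consecutive ratios, and $\mu_M\in[\Lambda/3,\Lambda]$, so $M=O(\log\log n)$. For a target $t$ run the greedy algorithm: $a_M=\lfloor t/\mu_M\rfloor$, then $a_{M-1}=\lfloor(t-a_M\mu_M)/\mu_{M-1}\rfloor$, and so on down to a multiple $b$ of $\lambda_1$; the residual is $<0.05$, $a_M\le\log R/\mu_M\lesssim_m(\log n)^2/\log n=\log n$, and every lower multiplicity is bounded by the corresponding ratio, so $\sum a_i\le a_M+O(\log\log n)$. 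For $c=c(m)>0$ small enough the polynomial $P=\prod_i(1-z^{d_{\mu_i}})^{a_i}(1-z^{d_{\lambda_1}})^{b}$ then has degree $\le a_M D+O(M)D+O(1)=o(n)$ and coefficient‑sum $\le 2^{a_M+O(\log\log n)}=n^{O(c)}(\log n)^{O(1)}=o(n)$, hence weight $\le n/2$ for all large $n$.

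The hard part is the density statement for $L(d)$ across the full range $[0.03,\Lambda]$: one must check, via the three‑distance theorem and the recursion $q_{k+1}=a_{k+1}q_k+q_{k-1}$, that the chains from successive convergents overlap up to bounded gaps and collectively reach the maximal scale $\Lambda\gtrsim_m\log n$. This is exactly where both the equidistribution rate and the polynomial bound $q_{k+1}\le Cq_k^{m-1}$ are used, and it is what pins $R$ down to $\exp(c(\log n)^2)$: reaching $\log R\asymp(\log n)^2$ requires using the top gadget with multiplicity $\asymp\log n$, which forces its scale to be $\gtrsim_m\log n$ and therefore its degree $\le D\le n$ — a compatibility with the weight budget that \eqref{eq:DCm} precisely guarantees. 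The reduction to the modulus, the degree and coefficient bookkeeping, and the concluding rotation are all routine.
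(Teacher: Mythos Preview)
Your proposal is correct, and it shares the same core gadget as the paper—powers of $(1-z^d)$ together with a monomial rotation $z^{c_0}$ to fix the argument—but the route to controlling the modulus is genuinely different and considerably more elaborate.  The paper uses a \emph{single} denominator $q$: by pigeonhole on $\{z^j\}_{j\le Cn}$ and the Diophantine condition it finds $q\le Cn^{m}$ with $|z^q-1|\in[\tfrac{1}{2n},\tfrac1n]$, then sets $p(z)=\ell\, z^{j}(z^q-1)^k$ with $k=\lceil \log(0.01r)/\log|z^q-1|\rceil$, and fine‑tunes the magnitude not with a second small‑scale binomial but simply with an integer multiplier $\ell\le 200n$, which already lands $|p(z)|$ in $[r,1.1r]$.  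This yields weight $\le n^{C}$, and the $\exp(c(\log n)^2)$ bound follows after the trivial rescaling $n\mapsto n^{1/C}$.  Your construction instead builds a full ladder of scales $L(d)$ out of the continued‑fraction convergents and runs a greedy decomposition; this lets you hit weight $\le n$ directly, at the cost of invoking the three‑distance theorem and a chain‑overlap argument that the paper avoids entirely.  The conceptual content—\eqref{eq:DCm} guarantees a $d\le n^{O_m(1)}$ with $L(d)\gtrsim_m\log n$, so $O(\log n)$ copies of one binomial reach depth $(\log n)^2$ within the weight budget—is the same in both.
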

The idea of the proof of Proposition~\ref{prp:super-poly} is to use explicit test polynomials of the form
$(z^q-1)^k$ for appropriate $q$ such that $|z^q-1| < n^{-c}$.  Analytically, this corresponds to taking high order
finite differences of closely and uniformly spaced points on a circle.

The lower bound~\eqref{eq:R-lbd} is nevertheless far from the exponential upper bound which
one may expect to hold generically.  We are however able to achieve such a lower bound in the special case $\cos\theta\in\bbQ\setminus\frac12\bbZ$.  In this case, $z=e^{i\theta}$ is algebraic with minimal polynomial of the form
\begin{align}
\label{eq:minimal-poly-intro}
p_0(z) = az^2 - bz + a
\end{align}
with $\gcd(a,b)=1$ and $a>1$.
This algebraic relation is the key to the following result.
\begin{restatable}{proposition}{algebraic}
\label{prp:exp-Rbds}
Suppose $\theta\in[0,2\pi)$ with $\cos\theta\in\bbQ\setminus\frac12\bbZ$.  Then
there exists $c>0$ such that $\theta$ satisfies $\DPV(n,R)$ for all $n$ large
and $R<\exp(cn/(\log n)^2)$.  That is, for all sufficiently large $n$
\begin{align}
\label{eq:DPV-exp}
R_{\DPV}(n,\theta) &\geq \exp(cn/(\log n)^2)
\end{align}
\end{restatable}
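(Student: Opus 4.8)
My plan is to exploit the arithmetic of $z=e^{i\theta}$. Since $\cos\theta=b/(2a)\in\bbQ\setminus\tfrac12\bbZ$, Niven's theorem shows $z$ is not a root of unity, so its minimal polynomial over $\bbQ$ is $x^{2}-(b/a)x+1$, and after clearing denominators $az^{2}-bz+a=0$ with $\gcd(a,b)=1$, $a\ge 2$ and $|b|<2a$. Put $w:=az$; then $w^{2}-bw+a^{2}=0$, so $\mcal{O}:=\bbZ[w]$ is an order in the imaginary quadratic field $\bbQ(\sqrt{b^{2}-4a^{2}})$, with positive-definite integral norm form $N(x+yw)=x^{2}+bxy+a^{2}y^{2}$. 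I would first record the elementary lower bound: for nonzero $P(x)=\sum_{j=0}^{D}c_{j}x^{j}\in\bbZ[x]$ one has $a^{D}P(z)=\sum_{j}c_{j}a^{D-j}w^{j}\in\mcal{O}$, hence $|P(z)|=\sqrt{N(a^{D}P(z))}/a^{D}\ge a^{-D}$. Thus a polynomial of weight $n$ produces values only of modulus $\gtrsim a^{-n}$ (the ultimate obstruction), and, read in reverse, to place a net point near a target of modulus $\rho$ one must allow degree $\gtrsim\log_{a}(1/\rho)$.

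I would then reduce $\DPV(n,R)$ to an approximation statement: it suffices to show that for every $\rho\in[R^{-1},1]$ and every $\varphi\in[0,2\pi]$ there is $P\in\bbZ[x]$ with $\sum_{j}(1+|c_{j}|)\le n$ such that $P(z)$ lies within a factor $e^{\pm 0.1}$ of $\rho e^{i\varphi}$. The building blocks are the short identities furnished by B\'ezout: fixing $u,v\in\bbZ$ with $au+bv=1$ and $|u|,|v|=O(a)$, one has for each $k\ge 1$
\[
\frac{z^{k}}{a}=v\,z^{k+1}+u\,z^{k}+v\,z^{k-1}.
\]
Iterating (multiply by $z$, then divide by $a$) builds the element $\gamma^{m}:=(z/a)^{m}$, of modulus $a^{-m}$. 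The point is that the division by $a$ need not blow the coefficients up exponentially: at each stage one adds a suitable integer multiple of $p_{0}(x)=ax^{2}-bx+a$ to cancel carries --- replacing a representative $Q$ of $z^{m+1}/a^{m}$ by $(xQ+p_{0}S)/a$ with $S\equiv b^{-1}Q\pmod a$ having digits in $\{0,\dots,a-1\}$ --- which keeps the degree at $2m$ and the $\ell^{1}$-weight under control. Translates $\gamma^{m}z^{k}$ and short sums $\gamma^{m}\sum_{i}z^{k_{i}}$ with $O(a)$ terms and each $k_{i}\le K_{0}(\theta)$ then attain, to within $0.1$ in the logarithm, every value in the annulus $\{\,a^{-m}r\,e^{i\psi}:1\le r\le a,\ \psi\in[0,2\pi]\,\}$; here one uses only that $z$ is not a root of unity, so that $\{j\theta\bmod 2\pi\}_{j\le K}$ is $\tfrac1{20}$-dense once $K\ge K_{0}(\theta)$, and that a few unit phases sum to moduli sweeping out $[1,a]$.

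Assembling this, given a target $\rho e^{i\varphi}$ with $\rho\ge R^{-1}$ one sets $m:=\lceil\log_{a}(1/\rho)\rceil\lesssim\log_{a}R$ and approximates it by such a translate or sum built from $\gamma^{m}$; provided $\log R\lesssim n/(\log n)^{2}$, the weight of that polynomial is $O(n)$, which yields the stated bound $R_{\DPV}(n,\theta)\ge\exp(cn/(\log n)^{2})$. I expect the main obstacle to be precisely the weight bookkeeping in the second step: one must propagate the $\ell^{1}$-estimate through the $\asymp m$ successive divisions by $a$ while the degree grows to $2m$ and the digit polynomials $S$ accumulate, and it is the balance between these effects that pins down the exponent of $\log n$; a crude accounting is lossier, and extracting the exponent $2$ requires arranging the divisions so that degree and coefficient size grow on compatible schedules. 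With the representation lemma in hand, the resulting $\DPV$ bound is exactly of the form needed for the argument outlined in Section~\ref{sec:DPV-xp}.
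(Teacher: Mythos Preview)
Your approach is correct and genuinely different from the paper's. The paper argues non-constructively: pigeonholing over polynomials with coefficients in $\{0,\dots,a-1\}$ and degree at most $d$ (evaluation at $z$ is injective on this set, since the leading coefficient of any nonzero difference is not divisible by $a$ and hence $p_{\rm min}$ cannot divide it) produces some $q_d$ with $|q_d(z)| \lsim d\,a^{-d/2}$, while the nonvanishing integer resultant $\Rs(p_{\rm min},q_d)$ gives the matching lower bound $|q_d(z)|\ge a^{-d/2}$. The factor-of-$d$ slack is then whittled down by iterating three times (degrees roughly $n/(\log n)^2$, $\log n$, $\log\log n$) and finishing with Proposition~\ref{prp:super-poly}; the $(\log n)^2$ loss enters when bounding the $\ell^1$-norm of the product $q_1q_2q_3q_4$ by the product of the $\ell^1$-norms.

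Your explicit construction is in fact stronger than you claim: the weight bookkeeping you flag as the obstacle gives no logarithmic loss at all. Writing $L_m := \|Q_m\|_{\ell^1}$ for your representative of $z^{m+1}/a^m$, the step $Q_m = (xQ_{m-1}+p_0 S_{m-1})/a$ with $\|S_{m-1}\|_{\ell^1}\le (a-1)(\deg Q_{m-1}+1)$ yields $L_m \le L_{m-1}/a + O_a(m)$, and the contraction factor $1/a<1$ forces $L_m = O_a(m)$. Since also $\deg Q_m = 2m+O(1)$, you obtain $w(Q_m)=O_a(m)$. Multiplying by a bounded-weight correction (available either by your short-sum argument or by invoking Proposition~\ref{prp:super-poly} once at the fixed scale $a$) to hit the residual modulus in $(a^{-1},1]$ and the angle then gives total weight $O_a(\log R)$. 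So your route actually proves $R_{\DPV}(n,\theta)\ge\exp(c_\theta n)$, removing the $(\log n)^2$ from the paper's stated bound; the ``compatible schedules'' you worry about balance automatically because the $\ell^1$-norm contracts geometrically at each division while the degree grows only linearly.
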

The bound~\eqref{eq:DPV-exp} is used to prove Theorem~\ref{thm:main-symmetric}.  The
argument which bounds the characteristic function in terms of $R_{\DPV}$ goes through a
spectral-theoretic argument used by Varj\'u in his work on three-dimensional isometries~\cite{varju2015random}.  This argument is responsible for the appearance of the
$N^{1/3}$ in the exponential in Theorem~\ref{thm:main-symmetric}

We conjecture that a bound of the form~\eqref{eq:DPV-exp} is generic for $\theta\in[0,2\pi)$.
\begin{conjecture}
\label{conj:theta-generic}
For generic $\theta\in[0,2\pi)$, there exist $c>0$ such that
for all sufficiently large $n$
\begin{align*}
R_{\DPV}(n,\theta) &\geq \exp(cn)
\end{align*}
\end{conjecture}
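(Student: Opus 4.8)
This final statement is a conjecture, so what follows is the route I would attempt, together with the point at which I expect to get stuck; it is a plan for an attack, not a sketch of a complete argument.

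\textbf{Overall strategy.} I would first read ``generic'' in the strongest reasonable sense, Lebesgue-almost every $\theta$, and aim for a Borel--Cantelli argument (a Baire-category variant, replacing the measure estimates below by density of the relevant open sets, would give a comeager set and would also qualify). Fix a small $c>0$ and, for each large $n$, a $0.05$-net $\mathcal N_n$ of the rectangle $[-cn,0]\times[0,2\pi]\subset\bbC$; note $|\mathcal N_n|\lesssim n^2$. For $\zeta\in\mathcal N_n$ let $E_n(\zeta)$ be the set of $\theta$ for which some integer polynomial $P$ with $\sum_j(1+|c_j|)\le n$ has $|\log P(e^{i\theta})-\zeta|<0.05$. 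If one can show $|[0,2\pi]\setminus E_n(\zeta)|\lesssim n^{-10}$ for every $\zeta\in\mathcal N_n$, then $\sum_n\sum_{\zeta\in\mathcal N_n}|[0,2\pi]\setminus E_n(\zeta)|<\infty$, so for a.e. $\theta$ all but finitely many $n$ satisfy $\theta\in\bigcap_{\zeta\in\mathcal N_n}E_n(\zeta)$; that says $\theta$ satisfies $\DPV(n,e^{cn})$, which is exactly \eqref{eq:DPV-exp} with $R=e^{cn}$, i.e.\ the conjecture.

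\textbf{Reduction to covering a range of moduli.} Two elementary operations cut the problem down to a one-dimensional one. First, $z^bP$ has the same modulus as $P$ and argument rotated by $b\theta$; since for a.e.\ $\theta$ the discrepancy of $\{b\theta\}_{b\le B}$ tends to $0$, rotating by $b\le B(\theta)$ already gives a $0.1$-net of all arguments at any fixed modulus, at the price of adding $B(\theta)$ to the weight. Second, multiplying by a binomial $z^a-1$ (cost: doubling $\|\cdot\|_1$ and adding $a$ to the degree) multiplies the modulus by $2|\sin(a\theta/2)|$, and by the three-distance theorem these factors are $0.1$-dense in log on $[n^{-1-\varepsilon},2]$ for a.e.\ $\theta$ as $a$ ranges over $a\le n$, which smears any achievable modulus over a short log-interval. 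Granting these, it suffices to show that for a.e.\ $\theta$ the set of \emph{moduli} $\{|P(e^{i\theta})|:\text{weight}(P)\le n/2\}$ is $0.1$-dense in log on $[e^{-cn},1]$.

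\textbf{The two easy ends.} On $[n^{-O(1)},1]$ the binomials $z^a-1$, $a\le n/2$, already suffice via the three-distance argument above (this is Proposition~\ref{prp:super-poly} used only at its shallowest scales). At the deep end, a counting argument valid for \emph{every} $z$ with $|z|=1$ produces small values: among the $\binom{D}{D/10}$ polynomials $\sum_{j\in S}z^j$ with $S\subseteq\{0,\dots,D-1\}$, $|S|=D/10$, two agree at $z$ to within $\lesssim D\binom{D}{D/10}^{-1/2}=e^{-cD}$, and their difference has degree $\le D$ and $\ell^1$-norm $\le D/5$, hence weight $\le n/2$ once $D\asymp n$; so the modulus set reaches down to $e^{-cn}$. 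Combining this one deep value with the rotation and binomial-smearing operations covers a thin log-band just below $e^{-cn}$ and a thin band just below each polynomial scale, but leaves the whole intermediate range $[e^{-cn},n^{-O(1)}]$ uncovered.

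\textbf{The main obstacle.} The crux is precisely those intermediate scales. The pigeonhole above is scale-blind: it forces a collision of polynomial values \emph{somewhere}, not in a prescribed annulus; while the only explicit deep-value constructions available, powers $(z^q-1)^k$ of a short binomial that is small because $\|q\theta/2\pi\|$ is small, have $\ell^1$-norm $2^k$, forcing $k\le\log_2 n$ and hence moving the log-modulus in jumps of size $\asymp\log n\gg0.1$. There seems to be no weight-$\le n$ construction interpolating between polynomial and exponential scales, and an anchor of deep modulus necessarily has $\|\cdot\|_1\gtrsim n/\log n$, so it cannot be rescaled by an integer within the weight budget either. To get around this I would try a \emph{scale-resolved pigeonhole}: partition value space into $O(n)$ log-annular cells $\{|w|\in[e^{-(k+1)\delta},e^{-k\delta}]\}$, and prove that for a.e.\ $\theta$ every such cell down to $e^{-cn}$ is occupied by a weight-$\le n/2$ polynomial, via a second-moment estimate --- computing $\Expec_\theta$ and the variance of the number of $\{-1,0,1\}$-polynomials of a degree $D_k\asymp k$ that land in cell $k$, then Chebyshev and union bounds over $k$ and over $n$. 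This reduces the conjecture to a quantitative \emph{anticoncentration} statement: controlling $\Prob(|\sum_{j<D}\eta_je^{ij\theta}|\le e^{-t})$ for $t$ up to $\asymp D$ and a.e.\ $\theta$ --- a small-ball problem for the two-dimensional random walk with steps $e^{ij\theta}$, i.e.\ exactly the fine arithmetic of $\{e^{ij\theta}\}$ that the rest of the paper is designed to avoid. I expect establishing this anticoncentration for a.e.\ $\theta$ (equivalently, ruling out a positive-measure set of $\theta$ with a ``hole'' at some intermediate scale) to be the real difficulty; it likely rests on a transcendence-measure-type input for $e^{i\theta}$ combined with equidistribution, but I do not see how to close it with the present tools, which is why the statement is left as a conjecture.
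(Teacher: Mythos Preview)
The statement is a conjecture, and the paper does not prove it; it only remarks that it ``appears to be related to questions of algebraic approximations to transcendental numbers in the spirit of Sprind\v{z}uk's conjecture,'' the difference being that here one wants approximations of bounded height and growing degree rather than the reverse. So there is no paper proof to compare against, and your framing of the proposal as a plan of attack rather than a proof is appropriate.

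Your Borel--Cantelli scaffolding and the two reductions (handling the argument by multiplying by $z^b$, and local smearing in log-modulus by factors $z^a-1$) are sound; the pigeonhole producing one value at depth $e^{-cD}$ is also fine for transcendental $z$. You correctly isolate the real obstruction: the pigeonhole is scale-blind, the explicit $(z^q-1)^k$ gadgets have $\ell^1$-cost $2^k$ and so can only be taken to power $k\lesssim\log n$, and nothing in hand populates the intermediate log-range $[-cn,\,-C\log n]$.

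One remark on your proposed second-moment attack: it may be cleaner to aim instead for a \emph{lower bound} of Sprind\v{z}uk type --- that for a.e.\ $\theta$ and every nonzero $P\in\bbZ[x]$ of weight at most $n$ one has $|P(e^{i\theta})|\ge e^{-Cn}$ --- and then feed that into the iteration scheme of Proposition~\ref{prp:exp-Rbds}. In that proof the resultant lower bound (Lemma~\ref{lem:resultant-lowerbd}) is exactly what pins the pigeonhole value into a specific dyadic shell, and this is what allows one to iterate through all scales with only logarithmic overhead per step. A generic lower bound of the same shape would play the identical role; your anticoncentration estimate is essentially dual to this, but the lower-bound formulation makes the connection to the existing iteration machinery transparent and sidesteps the variance computation over pairs of polynomials. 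Either way, this is precisely the input the paper flags as missing.

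A minor point: your net $\mathcal N_n$ has size $\asymp n$, not $n^2$ (the rectangle $[-cn,0]\times[0,2\pi]$ has area $\asymp n$); this only helps the Borel--Cantelli sum.
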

Conjecture~\ref{conj:theta-generic} appears to be related to questions of algebraic approximations to transcendental numbers in
the spirit of Sprindz\v uk's conjecture \cite{MR548467,MR1652916}.  The difference is that Sprindz\v uk's conjecture concerns approximations
with increasing height and bounded degree, and we are interested in polynomials with bounded height and increasing degree.

\subsubsection{Arithmetic structure of $\cos(j\theta+\varphi)$}
\label{sec:arithmetic}
The second idea we use to handle the product~\eqref{eq:LP-FT} is based on arithmetic properties of a recurrence relation.
For any $r>0$ and $\theta,\varphi\in[0,2\pi)$, we have the following recurrence
for $y_j := r\cos(j\theta+\varphi)$:
\[
y_{j-1} + y_{j+1} = 2\cos\theta y_j.
\]
One would like to show that $y_j$ cannot be too close to an integer for many $j$.  A natural idea is to consider
the dynamical system
\[
\begin{pmatrix}
y_j \\ y_{j+1}
\end{pmatrix}
=
\begin{pmatrix}
0 & 1 \\
-1 & 2\cos\theta
\end{pmatrix}
\begin{pmatrix}
y_{j-1} \\ y_{j}
\end{pmatrix}.
\]
If the transfer matrix above were a hyperbolic element of $\SL(2,\Real)$, it would be easy to show that
$y_j$ could not be an integer too often.  Of course, since $2\cos\theta < 2$, this is never the case.  However,
the transfer matrix \textit{does} behave like a hyperbolic element in a $p$-adic sense when $2\cos\theta\in\bbQ\setminus\bbZ$.
Indeed, if $y_0\in\bbQ$, then the $p$-adic norm of $y_j$ is exponentially decreasing as $|j|\to\infty$ for primes $p$ dividing the
denominator of $2\cos\theta$.

\subsection{Related works}
\label{sec:related}
Random walks on $\Isom(\Real^2)$ were considered in the early work of Ka\v zdan~\cite{KazdanUniform},
who established an ``equidistribution'' result for $Y_N$.
This was made more quantitative and generalized by Guivarch~\cite{guivarch}, who established a central limit theorem
at macroscopic scales for $Y_N$.  Khoklov~\cite{KhokhlovLocalLimit},
then established a local limit theorem for $Y_N$ with
under the assumption that $\mu$ is absolutely continuous.
Other than these works, we are unaware of results
that establish local limit theorems for $Y_N$ directly, at least in dimension $2$.

When $d\geq 3$, much finer results are known
and we briefly mention them here. In his thesis~\cite{varju2015random},
which directly motivated this paper, Varj\'u showed that one can take
$\delta_N = \exp(-cN^{1/4})$ for generic choices
of $\mu$.  Varj\'u's argument crucially used that the rotation group $\SO(d)$
is non-commutative when $d\geq 3$.  In particular,~\cite{varju2015random}
makes crucial use of the Solovay-Kitaev algorithm~\cite{kitaev1997quantum, dawson2006solovay}
for $\SO(d)$, which is equivalent to a so called restricted spectral gap.
This was improved in a later work of Varj\'u and Lindenstrauss~\cite{lindenstrauss2016random} to the optimal exponential scale $\delta_N=\exp(-cN)$ assuming that the rotational parts of the elements of $S$ satisfy
a true \textit{spectral gap} condition.
Bourgain and Gamburd~\cite{bourgain2010spectral} verified this spectral gap condition for rotation matrices satisfying an algebraic
condition.
Establishing a spectral gap for generic rotations remains a major open problem.
By comparison, the group $\SO(2)$ is commutative, and so no set of generators
can satisfy the analogue of the spectral gap condition of~\cite{lindenstrauss2016random}.
This is why we need to use the interaction between the rotation and translation parts of group elements, which is encoded in the
``dense polynomial values'' property.

In the special case that $\supp(\mu) = \{\tau_{+1} \circ \rho_\theta, \tau_{+1} \circ \rho_\theta\}$, $Y_N$
has the same law as the random polynomial
\[
Y_N \stackrel{d}{=} p(e^{i\theta}) = \sum_{j=0}^N \sigma_j e^{ij\theta}
\]
where the $\sigma_j$ are $+1/-1$ Bernoulli random variables.  Motivated by analogy to random matrix theory, one interesting question about such polynomials is the
distribution of the roots of $p$.  Universality for the root distribution was proven by Tao and Vu in~\cite{TaoVu},
where a key ingredient was an anticoncentration bound for $\log |p(z)|$.
In~\cite{TaoVu}, such an anticoncentration result was obtained by combining
deep results of Nguyen and Vu~\cite{nguyen2011optimal}, from inverse Littlewood-Offord theory,
and Shalom and Tao ~\cite{shalom-tao} which quantitatively refines Gromov's theorem on groups of polynomial growth.  Our first result, Theorem~\ref{thm:main-diophantine}
is closely related to Lemma 14.1 from~\cite{TaoVu} in that it also proves an anticoncentration result for the values of Kac polynomials away from roots of unity.
Our proof however avoids inverse Littlewood-Offord theory or quantiative versions of Gromov's theorem and provides a sharper bound for the minimal scale
($\exp(-c(\log n)^2)$ rather than $n^{-C}$).  It it unclear to us however whether Theorem~\ref{thm:main-diophantine}
has any bearing on the universality results for the distributions of zeros.

Another deep question about random polynomials is whether they are
irreducible with high probability~\cite{OdlyzkoPoonen}.  There has been much recent progress on this question~\cite{Kozmairred,BVirred,polyirr}.  As shown by Breuillard and Varj\'u in~\cite{BVirred}, the irreducibility of the random polynomial $p$ is closely related to
a finite field analogue of the random isometries we consider~\cite{BVent,BVcut,MR4637451}.  This in turn is closely related to work
by Breuillard and Varj\'u~\cite{BVconv} on Bernoulli convolutions (see~\cite{varju2018recent} for a nice survey).  The relationship is that
Bernoulli convolutions concern \textit{infinite} sums of \textit{real} numbers of the form $\sum_{j=0}^\infty \sigma_j \lambda^j$,
whereas random polynomials are \textit{finite} sums of \textit{complex} numbers (although see~\cite{shmerkin2016absolute,shmerkin2016absolute2} for work on
complex Bernoulli convolutions).

\subsection{Notation}
We write $\tau_v \in\Isom(\Real^2)$ for translation by $v\in\Real^2$
and $\rot_\theta$ for rotation by $\theta$.  For an isometry $g\in\Isom(\Real^2)$ we write
$\theta(g)$ for the rotation part, and $v(g) = g(0)$ for the translational part, so that
\[
g = \tau_{v(g)} \circ \rot_{\theta(g)}.
\]
We use $\langle v,w\rangle$ for the standard dot product of vectors on $\bbR^2$.
When convenient, we also identify $\bbR^2$ with $\bbC$.
The inner product on complex numbers then takes the form
$\langle x,y\rangle = \Rept (xy^*)$, and we can write the action of $g$ on $\bbC$ as
\[
g(x) = e^{i\theta(g)}x + g(0).
\]

For a polynomial $p(j) = \sum_{j=0}^D c_j z^j$, we write $\|p\|_{\ell^q}$ for the $\ell^q$ norm of its coefficients,
\[
\|p\|_{\ell^q} := \Big(\sum_{j=0}^D |c_j|^q\Big)^{1/q}.
\]

We use $c, c'$ for a small constant that may change from line to line and similarly $C$ is a large constant which may
change from line to line.

\subsection{Structure of the paper}
The rest of the paper is organized as follows.  In Section~\ref{sec:low-freq} we prove an estimate valid at low
frequencies $|\xi|\lsim 1$ which establishes the local limit theorem down to unit scale.  Then in Section~\ref{sec:spectral} we explain the connection between the $\DPV$ property and the characteristic function of $Y_N$ at high frequency.  In Section~\ref{sec:poly-reduction} we prove Proposition~\ref{prp:super-poly} and deduce Theorem~\ref{thm:main-diophantine} as a consequence.  In Section~\ref{sec:symmetric} we prove Proposition~\ref{prp:exp-Rbds} and deduce Theorem~\ref{thm:main-symmetric}.  Finally, in Section~\ref{sec:aadic} we complete the proof of Theorem~\ref{thm:main-asymmetric}.

\subsection*{Acknowledgements}
We would like to thank Sebastian Hurtado Salazar for telling us about the problem,
and for pointing out to us the group-theoretic obstruction
to exponential-scale limit theorems.
FH is supported by NSF award DMS-2303094.

\section{Low Frequency Estimate}
\label{sec:low-freq}
In this section we prove the following result for the characteristic
function of $Y_N$. It is useful at low frequencies, i.e. $|\xi| \ll 1$.
\begin{proposition}
\label{prp:lowfreq}
Let $\mu$ be a measure on $\Isom(\mathbb{R}^2)$ with finite support and
$Y_N$ be as in $\eqref{def:YN}$. If $\mathbb{E}g_1(0) =0$,
then
$$\mathbb{E}e^{-i\langle\xi, Y_N\rangle} = e^{-\frac{1}{2}\sigma^2 N|\xi|^2 + O(N |\xi|^4 + N|\xi|^3)},$$
where $\sigma^2 := \frac{1}{2}\mathbb{E}|g_1(0)|^2.$
\end{proposition}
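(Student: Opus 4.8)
The plan is to reduce to a sum of conditionally independent bounded summands, pin down the first two cumulants of $\langle\xi,Y_N\rangle$, and absorb everything else into the error term. First I would record the identity
\[
Y_N=\sum_{j=1}^N u_j,\qquad u_j:=e^{i\Theta_{j-1}}v_j,\quad v_j:=g_j(0),\ \ \Theta_{j-1}:=\sum_{i<j}\theta(g_i),
\]
which comes out of repeatedly applying $\rot_\theta\tau_v=\tau_{e^{i\theta}v}\rot_\theta$ to collect all rotations to the right. Setting $\mathcal F_k:=\sigma(g_1,\dots,g_k)$, the $(u_j)$ form a bounded martingale-difference sequence: $\Expec[u_j\mid\mathcal F_{j-1}]=e^{i\Theta_{j-1}}\Expec v_j=0$ since $\Expec v_j=\Expec g_1(0)=0$, and $|u_j|\le L:=\max_{g\in\supp\mu}|g(0)|<\infty$. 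In particular $\Expec\langle\xi,Y_N\rangle=0$. I would also note that it suffices to treat $|\xi|\le\delta_0$ for a small absolute constant $\delta_0$: for $|\xi|>\delta_0$ one checks that $\phi_N(\xi):=\Expec e^{-i\langle\xi,Y_N\rangle}$ is nonzero and that both sides of the claim are the exponential of a quantity of size $\gtrsim N$, so there is nothing to prove.

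The heart of the argument is to condition on the angle vector $\vec\theta:=(\theta(g_1),\dots,\theta(g_N))$. Given $\vec\theta$ the rotations $\Theta_{j-1}$ are frozen, so $Y_N$ is a sum of \emph{independent} vectors $u_j$ each bounded by $L$, though now with conditional means $\Expec[v_j\mid\vec\theta]=m_{\theta(g_j)}$, $m_\theta:=\Expec[g_1(0)\mid\theta(g_1)=\theta]$. The textbook Taylor estimate for characteristic functions of sums of independent bounded variables then gives, uniformly in $\vec\theta$ and for $|\xi|\le\delta_0$,
\[
\log\Expec\!\big[e^{-i\langle\xi,Y_N\rangle}\,\big|\,\vec\theta\big]=-i\big\langle\xi,\Expec[Y_N\mid\vec\theta]\big\rangle-\tfrac12\,\V\!\big(\langle\xi,Y_N\rangle\,\big|\,\vec\theta\big)+O(N|\xi|^3).
\]
Taking $\Expec_{\vec\theta}\exp(\cdot)$ and running a cumulant expansion in the weakly-dependent building blocks $\langle e^{-i\Theta_{j-1}}\xi,\,m_{\theta(g_j)}\rangle$, the degree-$\le2$ part collapses, by the law of total variance, to $-\tfrac12\Expec[\V(\langle\xi,Y_N\rangle\mid\vec\theta)]-\tfrac12\V(\Expec[\langle\xi,Y_N\rangle\mid\vec\theta])=-\tfrac12\V(\langle\xi,Y_N\rangle)$, while the higher cumulants, the cross term, and the fluctuations of the conditional variance all contribute $O(N|\xi|^3+N|\xi|^4)$ — here one uses that these functionals of $\vec\theta$ mix geometrically, since $\Expec e^{2i\theta(g_1)}$ has modulus $\le1$. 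It then remains to compute $\V(\langle\xi,Y_N\rangle)=\Expec\langle\xi,Y_N\rangle^2$: the martingale property kills all off-diagonal terms, leaving $\sum_{j=1}^N\Expec\langle e^{-i\Theta_{j-1}}\xi,v_j\rangle^2$, and since $\Expec\langle\eta,v_1\rangle^2=\sigma^2|\eta|^2+\mathrm{Re}(\bar\eta^2\beta)$ with $\beta:=\tfrac12\Expec g_1(0)^2$ and $\Expec e^{2i\Theta_{j-1}}=\lambda^{j-1}$ for $\lambda:=\Expec e^{2i\theta(g_1)}$, this equals $\sigma^2N|\xi|^2+\mathrm{Re}\big(\bar\xi^2\beta\sum_{j=1}^N\lambda^{j-1}\big)=\sigma^2N|\xi|^2+O(|\xi|^2)$ (the geometric sum being $O(1)$ as $|\lambda|\le1$), which matches the claim.

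The step I expect to be the main obstacle is this cumulant expansion in $\vec\theta$: on the range $|\xi|\le\delta_0$ the quantity $N|\xi|^2$ need not be small, so one may not Taylor-expand $\phi_N(\xi)$ or $e^{-\frac12\sigma^2N|\xi|^2}$ directly, and the bookkeeping must be done multiplicatively — e.g.\ by factoring the common Gaussian weight out of the recursion $\phi_k(\xi)=\Expec_g\big[e^{-i\langle\xi,g(0)\rangle}\phi_{k-1}(e^{-i\theta(g)}\xi)\big]$ and controlling only the ratio $e^{\frac12\sigma^2k|\xi|^2}\phi_k(\xi)$, which stays within $\exp(O(|\xi|^2)+O(k|\xi|^3))$ of $1$. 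The geometric series with ratio $\lambda$ is precisely what keeps the traceless part of the covariance — and the resulting fluctuations of $\langle\xi,\Expec[Y_N\mid\vec\theta]\rangle$ and of $\V(\langle\xi,Y_N\rangle\mid\vec\theta)$ — from building up to size $N|\xi|^2$; making this quantitative needs sub-Gaussian tail bounds for those $\vec\theta$-functionals, which again follow from the geometric mixing. (In the one degenerate case $\lambda=1$, i.e.\ $\theta(g)\in\{0,\pi\}$ almost surely, the geometric sum becomes $N$; this is harmless if $\Expec g_1(0)^2=0$, and otherwise costs only an extra $O(N|\xi|^2)$, negligible in every place the estimate is used.) Beyond that, the uniform $O(N|\xi|^3)$ tail is routine, coming entirely from the increments of $Y_N$ being bounded because $\supp\mu$ is finite.
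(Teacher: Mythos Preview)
Your plan and the paper's proof share the same skeleton --- the martingale decomposition $Y_N=\sum_j u_j$, the geometric decay of the traceless covariance via $\Expec e^{2i\theta(g_1)}$, and sub-Gaussian concentration for the quadratic part --- but the paper makes one choice that sidesteps exactly the obstacle you flag.

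Rather than conditioning on the angle vector $\vec\theta$, the paper conditions iteratively on the \emph{full} filtration $\mcal{F}_{j-1}=\sigma(g_1,\dots,g_{j-1})$.  Because $\Expec[u_j\mid\mcal{F}_{j-1}]=0$ exactly, each step of the tower gives
\[
\Expec\big[e^{-i\langle\xi,u_j\rangle}\,\big|\,\mcal{F}_{j-1}\big]=\exp\Big(-\tfrac12\,\Expec[\langle\xi,u_j\rangle^2\mid\mcal{F}_{j-1}]+O(|\xi|^3)\Big),
\]
so after $N$ iterations one lands on
\[
\Expec e^{-i\langle\xi,Y_N\rangle}=\Expec\exp\Big(-\tfrac12 S_N+O(N|\xi|^3)\Big),\qquad S_N:=\sum_{j=1}^N \Expec[\langle\xi,u_j\rangle^2\mid\mcal{F}_{j-1}],
\]
with a \emph{real, nonnegative} exponent and no linear term $-iA$ at all.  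The rest is two straight applications of Azuma: first $S_N$ is compared to the Birkhoff sum $\sum_j F(\rho_{<j}^{-1}\xi)$ with $F(\eta)=\Expec\langle\eta,v(g_1)\rangle^2$ (bounded martingale differences of size $O(|\xi|^2)$), and then that Birkhoff sum is shown to concentrate around $\sigma^2N|\xi|^2$ by building a martingale out of $\sum_j\rho_{<j}^{-2}\xi$ and using the geometric decay $|\Expec e^{i\theta(g_1)}|<1$.  Both deviations are sub-Gaussian at scale $N|\xi|^4$, so the moment generating function at $t=-\tfrac12$ is $e^{O(N|\xi|^4)}$ and the proposition follows.

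By contrast, your conditioning on $\vec\theta$ reintroduces a nonzero conditional mean (since $v_j$ and $\theta(g_j)$ are coupled), and you are left averaging an oscillatory factor $e^{-iA}$ with $A$ of typical size $|\xi|\sqrt{N}$ against a fluctuating real weight.  Carrying that out rigorously would essentially require a second martingale-CLT estimate for $A$ itself, which duplicates the work.  The recursion you sketch at the end --- peeling off one $g$ at a time and tracking the ratio $e^{\frac12\sigma^2k|\xi|^2}\phi_k(\xi)$ --- is much closer to the paper's argument; unrolling it with the full filtration is precisely how the linear term is made to vanish identically rather than only in expectation.
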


The proof of this exploits the martingale structure of $Y_N$.
Indeed, if for $j=1$ we define $\rho_{<1} := \Id$, and for $j\in [2,N]$ we define
\[
\rho_{<j}:= \rho_{\theta(g_1)}\rho_{\theta(g_2)}\cdots\rho_{\theta(g_{j-1})}.
\]
Then
\begin{equation}\label{eq:sum-expression}
    Y_N =  \sum_{j=1}^N \rho_{<j}v(g_j) = \sum_{j=1}^N w_j,
\end{equation}
where
$$w_j:= \rho_{<j}v(g_j),$$
for $j\in [1,N]$. Note this follows
since each $g_i$ acts by $g_i(x) = \rho_{\theta(g_i)}x + v(g_i).$

This has a martingale structure since $\mathbb{E}g_1(0) = \mathbb{E}v(g_1) = 0$, so that
\begin{equation}\label{eq:martingale-property}
\mathbb{E}\left[w_{j}\mid g_1,...,g_{j-1}\right] = \rho_{<j}\mathbb{E}\left[ v(g_j)\mid g_1,...,g_{j-1}\right]  = 0.
\end{equation}
The following is a standard calculation for the characteristic function of
martingales.
\begin{lemma}\label{lem:martingale-char-func-comp}
For any $j\in [1,N]$, define $\mathcal{F}_j := \sigma(g_1,...,g_j).$
For any $\xi\in \mathbb{R}^2$ we have
$$\mathbb{E}\exp\left(i\langle\xi, Y_N\rangle\right) = \mathbb{E}\exp\left(-\frac{1}{2}\sum_{j=1}^{N}\mathbb{E}\left[\langle \xi, w_j\rangle^2\ | \mathcal{F}_{j-1}\right] + O(N|\xi|^3)\right).$$
\end{lemma}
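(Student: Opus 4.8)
The plan is the standard martingale--cumulant computation: Taylor-expand the conditional characteristic function of each increment $w_j$ to second order, use the martingale identity \eqref{eq:martingale-property} to cancel the first-order term, and reassemble over $j=1,\dots,N$ by iterating the tower property while tracking the errors. Throughout I use that $Y_N=\sum_{j=1}^N w_j$ (from \eqref{eq:sum-expression}), with each $w_j$ being $\mathcal F_j$-measurable, $\mathbb E[w_j\mid\mathcal F_{j-1}]=0$, and $|w_j|=|v(g_j)|\le M:=\max_{g\in\supp\mu}|v(g)|<\infty$ by finiteness of $\supp\mu$; we may and do assume $|\xi|\le c_0$ for a constant $c_0=c_0(\supp\mu)$, since this is the only range in which the estimate is used and where the reassembly below is valid.

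For a single increment, apply $|e^{it}-1-it+\tfrac12 t^2|\le\tfrac16|t|^3$ with $t=\langle\xi,w_j\rangle$ (so $|t|\le M|\xi|$) and take $\mathbb E[\,\cdot\mid\mathcal F_{j-1}]$:
$$\mathbb E\big[e^{i\langle\xi,w_j\rangle}\mid\mathcal F_{j-1}\big]=1+i\big\langle\xi,\mathbb E[w_j\mid\mathcal F_{j-1}]\big\rangle-\tfrac12\,\mathbb E\big[\langle\xi,w_j\rangle^2\mid\mathcal F_{j-1}\big]+O(M^3|\xi|^3).$$
The middle term drops out by \eqref{eq:martingale-property}. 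With $A_j:=\mathbb E[\langle\xi,w_j\rangle^2\mid\mathcal F_{j-1}]$, which is $\mathcal F_{j-1}$-measurable and satisfies $0\le A_j\le M^2|\xi|^2$, this reads $\mathbb E[e^{i\langle\xi,w_j\rangle}\mid\mathcal F_{j-1}]=1-\tfrac12 A_j+O(|\xi|^3)$; shrinking $c_0$ so the right-hand side stays within $\tfrac12$ of $1$, we take logarithms and obtain $\mathbb E[e^{i\langle\xi,w_j\rangle}\mid\mathcal F_{j-1}]=\exp(-\tfrac12 A_j+r_j)$ with $r_j$ being $\mathcal F_{j-1}$-measurable and $|r_j|\le C|\xi|^3$.

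To reassemble, set $\beta_j:=\exp(-\tfrac12 A_j+r_j)=\mathbb E[e^{i\langle\xi,w_j\rangle}\mid\mathcal F_{j-1}]$, so that $\prod_j\beta_j=\exp(-\tfrac12\sum_j A_j+O(N|\xi|^3))$; it therefore suffices to show $\mathbb E[\prod_j e^{i\langle\xi,w_j\rangle}]=\mathbb E[\prod_j\beta_j]+O(N|\xi|^3)$. Use the product-telescoping identity $\prod_j x_j-\prod_j y_j=\sum_k(\prod_{j<k}x_j)(x_k-y_k)(\prod_{j>k}y_j)$ with $x_j=e^{i\langle\xi,w_j\rangle}$, $y_j=\beta_j$, and take expectations. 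Conditioning the $k$-th summand on $\mathcal F_{k-1}$ pulls out $\prod_{j<k}x_j$, and $\mathbb E[x_k-\beta_k\mid\mathcal F_{k-1}]=0$; the summand would vanish outright were $\prod_{j>k}\beta_j$ also $\mathcal F_{k-1}$-measurable. It is not, but it depends on $g_k$ only through $\theta(g_k)$, with each factor varying in $\theta(g_k)$ at scale $O(|\xi|^2)$. Splitting $\prod_{j>k}\beta_j$ into its $\theta(g_k)$-average (for which the $k$-th term does vanish) plus a controlled remainder, expanding $x_k-\beta_k=i\langle\xi,w_k\rangle+O(|\xi|^2)$, and using the martingale cancellation together with a careful accounting of the leftover first-order term (which, for instance, vanishes identically when $\mathbb E[v(g_j)\mid\theta(g_j)]=0$), one checks that each $k$-th term is $O(|\xi|^3)$; summing over $k$ yields the claim.

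The main obstacle is precisely this reassembly. Unlike the i.i.d.\ case, the conditional variances $A_j$---hence the $\beta_j$---are genuinely random, because $A_j=\mathbb E[\langle\rho_{<j}^{-1}\xi,v(g_j)\rangle^2\mid\mathcal F_{j-1}]$ depends on the accumulated rotation $\rho_{<j}$; one cannot simply pull $\prod_j\mathbb E[e^{i\langle\xi,w_j\rangle}\mid\mathcal F_{j-1}]$ out of the expectation. The work is to quantify the weak coupling of each increment $w_k$ with the subsequent conditional characteristic functions, to confirm that the martingale property improves the naive $O(|\xi|)$ bound on each telescoping term to $O(|\xi|^3)$---in particular that the residual linear contribution, schematically $\mathbb E\big[\langle\rho_{<k}^{-1}\xi,\mathbb E[v(g_k)\mid\theta(g_k)]\rangle\cdot(\text{oscillating factor})\mid\mathcal F_{k-1}\big]$, is absent or higher order---and to verify that these couplings sum to only $O(N|\xi|^3)$. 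The per-increment Taylor estimate is, by contrast, entirely routine.
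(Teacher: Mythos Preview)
Your per-increment computation---Taylor expanding $\mathbb{E}[e^{i\langle\xi,w_j\rangle}\mid\mathcal F_{j-1}]$ to get $\beta_j=e^{-\frac12 A_j+r_j}$ with $|r_j|\le C|\xi|^3$---is exactly what the paper does, and you are right that the reassembly into $\mathbb{E}[\prod_j\beta_j]$ is where the content lies; the paper itself writes only ``iterating'' at that point.

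But your telescoping argument has a genuine gap. You claim each $k$-th term
\[
\mathbb{E}\Big[\Big(\prod_{j<k}e^{i\langle\xi,w_j\rangle}\Big)(e^{i\langle\xi,w_k\rangle}-\beta_k)\Big(\prod_{j>k}\beta_j\Big)\Big]
\]
is $O(|\xi|^3)$, based on splitting $\prod_{j>k}\beta_j$ into its $\theta(g_k)$-average plus a ``controlled remainder''. The problem is the size of that remainder. Each factor $\beta_j=\psi(\rho_{<j}^{-1}\xi)$ does vary in $\theta(g_k)$ by $O(|\xi|^2)$, but there are $N-k$ such factors, so the product varies by $O((N-k)|\xi|^2)$, not $O(|\xi|^2)$. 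Pairing this against $e^{i\langle\xi,w_k\rangle}-\beta_k=O(|\xi|)$ gives only $O((N-k)|\xi|^3)$ for the $k$-th term, and summing yields $O(N^2|\xi|^3)$. Your ``careful accounting of the leftover first-order term'' does not rescue this: even if the $i\langle\xi,w_k\rangle$ contribution against the remainder vanishes (which already needs the unassumed hypothesis $\mathbb{E}[v(g)\mid\theta(g)]=0$), the second-order piece of $x_k-\beta_k$ against the $O((N-k)|\xi|^2)$ remainder still produces $O((N-k)|\xi|^4)$ per term and $O(N^2|\xi|^4)$ in total. Either error is too weak: the downstream application in Proposition~\ref{prp:lowfreq} needs the exponent error to be $o(N|\xi|^2)$ uniformly for $|\xi|\le c$, and neither $N^2|\xi|^3$ nor $N^2|\xi|^4$ satisfies this.

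To get the stated $O(N|\xi|^3)$ one must use more than the crude per-factor variation bound---essentially, that the $\theta(g_k)$-dependence of $\prod_{j>k}\beta_j$ enters through the trace-free part of $\sum_{j>k}F(\rho_{<j}^{-1}\xi)$, whose oscillation is much smaller than the trivial $(N-k)|\xi|^2$. This is the same mechanism underlying Lemmas~\ref{lem:reduction-to-markov-chain}--\ref{lem:markov-chain-estimate}, and the paper's three-line proof simply does not address it.
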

\begin{proof}
Since $Y_j$ is $\mathcal{F}_j$-measurable,
iterating equation \eqref{eq:martingale-property} implies
\begin{align*}
\mathbb{E}e^{i\langle\xi, Y_N\rangle}
& = \mathbb{E}\left[e^{i\langle \xi, Y_{N-1}\rangle} \mathbb{E}\left[e^{-i\langle\xi, w_{N}\rangle}\ | \mathcal{F}_{N-1}\right]\right]\\
& = \mathbb{E}\left[e^{i\langle \xi, Y_{N-1}\rangle} e^{-\frac{1}{2}\mathbb{E}\left[\langle \xi, w_{N}\rangle^2\ | \mathcal{F}_{N-1}\right] + O(|\xi|^3)}\right]\\
& = \mathbb{E}e^{-\frac{1}{2}\sum_{j=1}^{N}\mathbb{E}\left[\langle \xi, w_j\rangle^2\ | \mathcal{F}_{j-1}\right] + O(N|\xi|^3)}.
\end{align*}
\end{proof}

We estimate the first term by reducing it to the
Birkhoff sum of a smooth function over the Markov chain
generated by the rotation parts of the $g_i$.
Indeed, define the function $F:\mathbb{R}^2\to \mathbb{R}^2$ by
\begin{equation}\label{def:F}
    F(\xi) := \mathbb{E}_{g_1} \langle \xi, v(g_1)\rangle^2
\end{equation}
and define
\[
S_N:= \sum_{j=1}^{N}\mathbb{E}\left[\langle \xi, w_j\rangle^2| \mathcal{F}_{j-1}\right].
\]

\begin{lemma}\label{lem:reduction-to-markov-chain}
For any $\lambda>0$ we have
$$\mathbb{P}\left(\left|S_N-\sum_{j=1}^N F(\rho_{<j}^{-1}\xi)\right|\geq \lambda \right)\leq 4e^{-c\frac{\lambda^2}{N|\xi|^4}}.$$
\end{lemma}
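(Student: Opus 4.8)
The plan is to recognize the ``Birkhoff sum'' $\sum_{j=1}^N F(\rho_{<j}^{-1}\xi)$ as the predictable compensator of the raw quadratic sum $\sum_{j=1}^N\langle\xi,w_j\rangle^2$, so that the difference $D_N:=S_N-\sum_{j=1}^N F(\rho_{<j}^{-1}\xi)$ is a martingale with uniformly bounded increments, and then to conclude with the Hoeffding--Azuma inequality.

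First I would pin down the compensator. Fix $j\in[1,N]$. The rotation $\rho_{<j}$ is $\mathcal F_{j-1}$-measurable, while $g_j$ is independent of $\mathcal F_{j-1}$ with law $\mu$; and since $\rho_{<j}\in\SO(2)$ is orthogonal we have the adjunction $\langle\xi,\rho_{<j}v\rangle=\langle\rho_{<j}^{-1}\xi,v\rangle$. Combining these three facts with the definition \eqref{def:F} of $F$ yields
\[
\mathbb E\big[\langle\xi,w_j\rangle^2\mid\mathcal F_{j-1}\big]
=\mathbb E_{g}\big[\langle\rho_{<j}^{-1}\xi,v(g)\rangle^2\big]=F(\rho_{<j}^{-1}\xi).
\]
This identity is exactly what makes $\sum_j F(\rho_{<j}^{-1}\xi)$, a functional of the rotation Markov chain $j\mapsto\rho_{<j}^{-1}\xi$ on the circle alone, the right object to compare $S_N$ against. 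Hence the increments $X_j:=\langle\xi,w_j\rangle^2-F(\rho_{<j}^{-1}\xi)$ satisfy $\mathbb E[X_j\mid\mathcal F_{j-1}]=0$, and $D_n:=\sum_{j=1}^n X_j$ is an $(\mathcal F_n)_{n\le N}$-martingale with $D_0=0$; the quantity to bound is $D_N$.

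Next I would verify the increments are uniformly bounded and apply Azuma. Set $V:=\max_{g\in\supp\mu}|v(g)|$, which is finite since $\mu$ has finite support. As $\rho_{<j}$ is an isometry, $|\langle\xi,w_j\rangle|=|\langle\xi,\rho_{<j}v(g_j)\rangle|\le|\xi|\,|v(g_j)|\le V|\xi|$, so $\langle\xi,w_j\rangle^2\in[0,V^2|\xi|^2]$, and averaging gives $F(\rho_{<j}^{-1}\xi)\in[0,V^2|\xi|^2]$ as well; thus, conditionally on $\mathcal F_{j-1}$, the increment $X_j$ has mean zero and lies in an interval of length at most $V^2|\xi|^2$. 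By the conditional Hoeffding bound $\mathbb E[e^{sX_j}\mid\mathcal F_{j-1}]\le e^{s^2V^4|\xi|^4/8}$ and the usual Chernoff/martingale argument, for every $\lambda>0$,
\[
\Prob\big(|D_N|\ge\lambda\big)\le 2\exp\!\Big(-\frac{2\lambda^2}{N\,V^4|\xi|^4}\Big),
\]
which is the asserted bound with $c=2/V^4$ (and $2\le 4$).

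I do not anticipate a genuine obstacle here: the argument is routine once the compensator identity is in hand. The only points needing care are that identity itself — orthogonality of $\rho_{<j}$ together with independence of the fresh step $g_j$ from the rotation history — and the uniform boundedness of $|v(g)|$ over the finite set $\supp\mu$, which is precisely what lets plain bounded-increment Azuma, rather than a variance-sensitive refinement, produce the stated Gaussian tail.
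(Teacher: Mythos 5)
Your proof is correct and takes essentially the same route as the paper: establish the compensator identity $\mathbb{E}[\langle\xi,w_j\rangle^2\mid\mathcal{F}_{j-1}]=F(\rho_{<j}^{-1}\xi)$ from the orthogonality of $\rho_{<j}$ and the independence of $g_j$ from $\mathcal{F}_{j-1}$, then apply bounded-increment Azuma using the finite support of $\mu$. One remark worth recording: with $S_N$ literally defined as $\sum_{j}\mathbb{E}[\langle\xi,w_j\rangle^2\mid\mathcal{F}_{j-1}]$, your first displayed identity already gives $S_N-\sum_{j}F(\rho_{<j}^{-1}\xi)=0$ almost surely, so under that reading the lemma is vacuous; the Azuma step (in your write-up and in the paper's own proof) is really establishing concentration of the raw sum $\sum_{j}\langle\xi,w_j\rangle^2$ about the Birkhoff sum, which is the non-trivial reading you correctly adopt and prove.
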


\begin{proof}
Averaging the $j$-th term over $g_j$ gives $F(\rho_{<j}^{-1}\xi)$.
Hence subtracting $F(\rho_{<j}^{-1}\xi)$ from each term gives a
martingale with bounded increments, and the estimate follows by Azuma's inequality. Indeed, since $w_j = \rho_{<j}v(g_j)$ we have
$$\mathbb{E}_{g_j}\left[\mathbb{E}\left[\langle \xi, w_j\rangle^2| \mathcal{F}_{j-1}\right]\right]
= \mathbb{E}_{g_j}\left[\mathbb{E}\left[\langle \rho_{<j}^{-1}\xi, v(g_j)\rangle^2| \mathcal{F}_{j-1}\right]\right]
= F(\rho_{<j}^{-1}\xi)$$
for each $j\in [1,N]$. Thus
\begin{align*}
\mathbb{P}\left(\left|S_N-\sum_{j=1}^N F(\rho_{<j}^{-1}\xi)\right|\geq \lambda \right)
& = \mathbb{P}\left(\left|\sum_{j=1}^{N}\mathbb{E}\left[\langle \xi, w_j\rangle^2| \mathcal{F}_{j-1}\right] - \mathbb{E}_{g_j}\mathbb{E}\left[\langle \xi, w_j\rangle^2| \mathcal{F}_{j-1}\right]\right|\geq\lambda\right)
\end{align*}
The right hand side can now be estimated by Azuma's inequality.
It is by definition a martingale and each
difference is bounded by $C|\xi|^2$ since $\mu$ has finite support.
Thus the lemma follows by Azuma's inequality.
\end{proof}

Now we show the Birkhoff type sum $\sum_{j=1}^N F(\rho_{<j}^{-1}\xi)$ is concentrated
around $\sigma^2 N|\xi|^2$.
\begin{lemma}\label{lem:markov-chain-estimate}
For any $\lambda>0$ we have
\[
\mathbb{P}\biggl(\Big|\sum_{j=1}^{N}F(\rho_{<j}^{-1}\xi) - \sigma^2 |\xi|^2N\Big|\geq \lambda \biggr)\leq \exp\left(-c \frac{\lambda^2}{N|\xi|^4}\right)
\]
\end{lemma}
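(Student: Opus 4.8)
The plan is to convert the Birkhoff-type sum into an explicit geometric-type sum along the \emph{multiplicative} random walk formed by the rotation angles, and then extract from that sum a bounded-increment martingale so that Azuma's inequality delivers the stated sub-Gaussian tail. First I would perform a Fourier reduction of $F$. Identifying $\Real^2$ with $\bbC$ one has $\langle\xi,v\rangle^2=\tfrac12|\xi|^2|v|^2+\tfrac12\Rept(\xi^2\bar v^2)$, so averaging over $g_1$ gives
\[
F(\eta)=\sigma^2|\eta|^2+\Rept\bigl(\kappa(\eta)\bigr),\qquad \kappa(\eta):=\tfrac12\,\eta^2\,\overline{\Expec v(g_1)^2},
\]
valid for every $\eta\in\bbC$, with $|\kappa(\eta)|\le\sigma^2|\eta|^2\le C|\eta|^2$ (here and below $C,c$ depend only on $\mu$, which has finite support). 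Since $\rho_{<j}$ is the rotation by $\Theta_{j-1}:=\sum_{k<j}\theta(g_k)$, is an isometry, and satisfies $\kappa(\rho_{<j}^{-1}\xi)=e^{-2i\Theta_{j-1}}\kappa(\xi)$, applying the displayed identity with $\eta=\rho_{<j}^{-1}\xi$ yields
\[
F(\rho_{<j}^{-1}\xi)=\sigma^2|\xi|^2+\Rept\bigl(\kappa(\xi)\,e^{-2i\Theta_{j-1}}\bigr).
\]
Summing in $j$ reduces the lemma to the bound $\Prob(|M_N|\ge s)\le C\exp(-cs^2/N)$ for $M_N:=\sum_{j=1}^N e^{-2i\Theta_{j-1}}$.

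The point I expect to be the crux is that the summands $F(\rho_{<j}^{-1}\xi)$ are $\mcal F_{j-1}$-measurable, so $\sum_j F(\rho_{<j}^{-1}\xi)$ is a \emph{predictable} sequence, not a martingale, and Azuma cannot be applied to it directly; the resolution is a summation-by-parts identity using the contraction of the multiplicative walk. Writing $\omega_k:=e^{-2i\theta(g_k)}$ (i.i.d., unit modulus, mean $\beta:=\Expec\omega_1$) and $P_k:=\omega_1\cdots\omega_k$ (so $M_N=\sum_{k=0}^{N-1}P_k$), one has $P_k-\beta P_{k-1}=\varepsilon_k$ with $\varepsilon_k:=(\omega_k-\beta)P_{k-1}$ a martingale difference, $|\varepsilon_k|\le2$. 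Summing this over $k=1,\dots,N-1$ and rearranging gives
\[
(1-\beta)M_N=1-\beta P_{N-1}+E_{N-1},\qquad E_m:=\sum_{k=1}^m\varepsilon_k,
\]
so $|M_N|\le(2+|E_{N-1}|)/|1-\beta|$ with $E_{N-1}$ a genuine martingale. Here I use $\beta\neq1$ — equivalently that $\supp\mu$ contains an element whose rotation angle is not a multiple of $\pi$ — which one checks holds under every hypothesis of Theorems~\ref{thm:main-diophantine}, \ref{thm:main-symmetric}, \ref{thm:main-asymmetric}; this is exactly the non-degeneracy that makes the factor $(1-\beta)^{-1}$ harmless.

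To finish, I would apply Azuma's inequality to $E_{N-1}$ (splitting into real and imaginary parts, each increment bounded by $2$) to get $\Prob(|E_{N-1}|\ge t)\le C\exp(-ct^2/N)$, hence $\Prob(|M_N|\ge s)\le C\exp(-c\,|1-\beta|^2 s^2/N)$ once $s\gsim|1-\beta|^{-1}$. Combined with the first step, for $\lambda$ above a fixed multiple of $|\xi|^2$,
\[
\Prob\Bigl(\bigl|\textstyle\sum_{j=1}^N F(\rho_{<j}^{-1}\xi)-\sigma^2 N|\xi|^2\bigr|\ge\lambda\Bigr)\le\Prob\bigl(|M_N|\ge\lambda/(C|\xi|^2)\bigr)\le C\exp\bigl(-c\lambda^2/(N|\xi|^4)\bigr);
\]
for smaller $\lambda$ the right-hand side is bounded below by a constant and the estimate is immediate, and absorbing the prefactor $C$ into the exponent in the standard way gives the lemma. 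I expect the substantive points to be the Fourier identity that pins the mean of each term to $\sigma^2|\xi|^2$ and the summation-by-parts step; the Azuma application itself is routine.
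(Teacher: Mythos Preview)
Your argument is correct and follows the same overall arc as the paper's: both reduce the deviation $\sum_j F(\rho_{<j}^{-1}\xi)-\sigma^2N|\xi|^2$ to a bound on $M_N=\sum_{j}e^{-2i\Theta_{j-1}}$ (you via the complex identity $\langle\xi,v\rangle^2=\tfrac12|\xi|^2|v|^2+\tfrac12\Rept(\xi^2\bar v^2)$, the paper via the trace formula and the double-angle identities --- these are the same computation in different notation), and then control $M_N$ by building a martingale and invoking Azuma. The genuine difference is in the martingale step. The paper forms the Doob martingale $Z_k=\Expec(M_N\mid\mcal F_k)$ and uses the contraction $|\Expec e^{-2i\theta(g_1)}|<1$ to bound the increments via the exponentially decaying tail; the degenerate case where all rotation angles coincide is treated separately as a bounded geometric series. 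Your summation-by-parts identity $(1-\beta)M_N=1-\beta P_{N-1}+E_{N-1}$ extracts the martingale $E_{N-1}$ algebraically and needs only $\beta\neq 1$, so it handles both cases uniformly (when all angles equal $\theta_0$ one has $E_{N-1}=0$ and $|M_N|\le 2/|1-\beta|$ directly). This is a slightly cleaner route, though the two arguments are close cousins and yield the same bound under the same non-degeneracy.
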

\begin{proof}
By the definition of $F(\xi)$, see \eqref{def:F},
we have
$$F(\xi) = \mathbb{E}_{g_1}\langle\xi, \left(\mathbb{E}v(g_1)v(g_1)^\intercal\right)\xi\rangle = \Tr\left((\xi\xi^{T})( \Expec_{g_1} v(g_1) v(g_1)^\intercal)\right).$$
Since $\Tr(\Expec_{g_1} v(g_1) v(g_1)^\intercal) = \mathbb{E}|g_1(0)|^2 = 2\sigma^2$ it suffices to prove
\begin{equation}\label{eq:reduction-1}
\mathbb{P}\left(\left| \sum_{j=1}^N (\rho_{<j}^{-1}\xi)(\rho_{<j}^{-1} \xi)^\intercal - \frac12 N|\xi|^2\Id\right|\geq \lambda\right)\leq Ce^{-c\frac{\lambda^2}{N|\xi|^4}}.
\end{equation}
One can take the $\ell^{\infty}$ norm for the difference of matrices above.

By scaling and conjugating with a rotation, it suffices establish
\eqref{eq:reduction-1}
for $\xi = e_1$.
In this case, we use the angle addition formulas for $\cos(\theta)$ and $\sin(\theta)$,
to write
\begin{align*}
\begin{pmatrix}
\cos(\theta)^2 & \cos(\theta)\sin(\theta) \\
\cos(\theta)\sin(\theta) & \cos(\theta)^2
\end{pmatrix}
& =\frac12 \Id +
\frac12
\begin{pmatrix}
\cos(2\theta) & \sin(2\theta) \\
\sin(2\theta) & \cos(2\theta)
\end{pmatrix},
\end{align*}
which implies
\begin{equation}\label{eq:half-angle-reductio}
\left| \sum_{j=1}^N (\rho_{<j}^{-1}\xi)(\rho_{<j}^{-1} \xi)^\intercal - \frac12 N\Id\right|\leq C\left|\sum_{j=1}^N \rho_{<j}^{-2} \xi\right|^2
\end{equation}
To estimate the right hand side we consider the martingale $(Z_k)_{k\in [0,N]}$, given by
\[
Z_k := \Expec \left(\sum_{j=1}^N \rho_{<j}^{-2} \xi\  \Big| \mcal{F}_k\right).
\]
The key observation is that if
the support of $\mu$ has two elements with
different rotation parts, there exists some $c>0$,
such for any $\xi\in \mathbb{R}^2$,
with $|\xi| = 1$
\begin{equation}\label{eq:exp-decay}
|\Expec \rho_{\theta(g_1)}^{-1}\xi| < 1-c.
\end{equation}
We assume this is the case since otherwise the lemma
follows immediately as $\sum_{j=1}^N \rho_{<j}^{-2}\xi$ is
simply a geometric
series bounded and has magnitude at most $C$.
Iterating \eqref{eq:exp-decay} implies
\[
\left|Z_0\right| = \left|\Expec \sum_{j=1}^N \rho_{<j}^{-2} \xi\right| \leq
\sum_{j=1}^N \left|\Expec (\rho_{<j}^{-2} \xi)\right| \leq \sum_{j=1}^N e^{-cj} < C.
\]
Furthermore, \eqref{eq:exp-decay}  also implies that $(Z_i)_{i\in [0,N]}$ has bounded increments.
Indeed, we can estimate
\begin{align*}
|Z_{k+1} - Z_k|
&\leq
\sum_{j=1}^N
|\Expec( \rho_{<j}^{-2} \xi \mid \mcal{F}_{k+1}) - \Expec(\rho_{<j}^{-2} \xi | \mcal{F}_k)| \\
&\leq
\sum_{j=k}^N
|\Expec( \rho_{<j}^{-2} \xi \mid \mcal{F}_{k+1})| + |\Expec(\rho_{<j}^{-2} \xi | \mcal{F}_k)| \\
&\leq 2\sum_{j=k}^N e^{-c(j-k)} < 2C.
\end{align*}
Thus Azuma's inequality implies
$$\mathbb{P}(|\sum_{j=1}^N \rho_{<j}^{-2}\xi|\geq \lambda ) = \mathbb{P}(|Z_N|\geq \lambda )\leq Ce^{-\frac{\lambda^2}{N}},$$
which implies Lemma \ref{lem:markov-chain-estimate} via \eqref{eq:half-angle-reductio}
\end{proof}

Now we use Lemma \ref{lem:martingale-char-func-comp},
and then Lemmas \ref{lem:reduction-to-markov-chain} and Lemma \ref{lem:markov-chain-estimate},
to compute
\begin{align*}
\mathbb{E}e^{-i\langle Y_N, \xi\rangle}
& = e^{-\frac{1}{2}N\sigma^2 |\xi|^2 + O(N|\xi|^3)} \mathbb{E}e^{\frac{1}{2}N\sigma^2|\xi|^2 -\frac{1}{2}\sum_{j=1}^{N}\mathbb{E}\left[\langle \xi, w_j\rangle^2\ | \mathcal{F}_{j-1}\right]}\\
& =  e^{-\frac{1}{2}N\sigma^2 |\xi|^2 + O(N|\xi|^3+ N|\xi|^4)},
\end{align*}
which proves the proposition.

\section{Reduction to dense polynomial value property}
\label{sec:spectral}

In this section we explain how the ``dense polynomial value'' property
yields high frequency estimates on the characteristic function of $Y_N$.

Recall from  Definition~\ref{def:DPV} that we say $\theta\in \mathbb{R}$ satisfies the dense polynomial
value property $\DPV(n,R)$ if the set
\[
\left\{\log p(e^{i\theta}) \Big |\, p(z) = \sum_{j=0}^D c_j z^j \in \mathbb{Z}[z] \text{ with } \sum_{j=0}^D (1+|c_j|) < n\right\}
\]
is a $\frac{1}{10}$-net of $[-\log R, 0]\times [0,2\pi)$.
The following result bounds the decay of the characteristic function of $Y_N$
in terms of the $\DPV$ property.

\begin{proposition}
\label{prp:DPV-to-hf}
Let $\mu$ be a probability measure on $\Isom(\Real^2)$ with finite support
and $c>0$. If there exists $g_1,g_2\in \supp\mu$,  such
that $\theta(g_1g_2^{-1})$ satisfies $\DPV(n,R)$,
then for all $c<r<R$
the following estimate holds:
\[
\int_{\Sphere^1_r} |\Expec e^{i\langle \xi, Y_N\rangle}|^2 \diff \xi < \exp(-c'n^{-2}N).
\]
The constant $c'$ depends on $c$ and $\mu$.
\end{proposition}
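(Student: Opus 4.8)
The plan is to realize the characteristic function as an orbit of a transfer operator on a circle and then iterate a one‑block contraction. Identify $\Real^2\cong\bbC$, write $\xi=re^{i\psi}$, and use $g(x)=e^{i\theta(g)}x+v(g)$. Conditioning on $g_1$ gives the recursion $\Expec e^{i\langle\xi,Y_N\rangle}=\Expec_g\big[e^{i\langle\xi,v(g)\rangle}\Expec e^{i\langle e^{-i\theta(g)}\xi,Y_{N-1}\rangle}\big]$, so $\Phi_N(\psi):=\Expec e^{i\langle re^{i\psi},Y_N\rangle}$, viewed as a function on $\Sphere^1_r$, satisfies $\Phi_N=T\Phi_{N-1}$ with $(Tf)(\psi):=\Expec_{g\sim\mu}\big[e^{i\langle re^{i\psi},v(g)\rangle}f(\psi-\theta(g))\big]=\Expec_{g\sim\mu}U_gf$, where $U_gf(\psi):=e^{i\langle re^{i\psi},v(g)\rangle}f(\psi-\theta(g))$ is the unitary action of $g$ on $L^2(\Sphere^1_r)$. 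As an average of unitaries $\|T\|_{L^2\to L^2}\le1$, and since $\Phi_0\equiv\mathbf{1}$ we have $\int_{\Sphere^1_r}|\Expec e^{i\langle\xi,Y_N\rangle}|^2\,d\xi=\|T^N\mathbf{1}\|_{L^2(\Sphere^1_r)}^2$. So it suffices to prove, for $c<r<R$, the per‑block bound $\int_{\Sphere^1_r}|\Phi_N|^2\le(1-c''/n)\int_{\Sphere^1_r}|\Phi_{N-n}|^2$; iterating $\lfloor N/n\rfloor$ times yields $\|T^N\mathbf{1}\|^2\le(1-c''/n)^{\lfloor N/n\rfloor}\,2\pi r\le\exp(-c'n^{-2}N)$, with the polynomial factor $2\pi r<2\pi R$ absorbed into the exponent — legitimate because in every regime where the Proposition is applied $n^{-2}N\gg\log R$.

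To obtain the per‑block bound I would iterate the recursion $n$ times: $\Phi_N=T^n\Phi_{N-n}$ with $(T^nf)(\psi)=\Expec_{\vec g}\big[e^{i\langle re^{i\psi},\widetilde Y_n\rangle}f(\psi-\Theta_n)\big]$, where $\Theta_n:=\sum_{j=1}^n\theta(g_j)$ and $\widetilde Y_n$ has the law of $Y_n$. Since $f(\psi-\Theta_n)$ depends on $\vec g$ only through $\Theta_n$, conditioning on $\Theta_n$ and applying Cauchy--Schwarz to the $\Theta_n$‑average gives, with $\chi_\Theta(\xi):=\Expec[e^{i\langle\xi,Y_n\rangle}\mid\Theta_n=\Theta]$, the bound $|\Phi_N(\psi)|^2\le\big(\Expec_{\Theta_n}|\chi_{\Theta_n}(re^{i\psi})|^2\big)\,\Expec_{\Theta_n}|\Phi_{N-n}(\psi-\Theta_n)|^2$. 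If the first factor is $\le1-c''/n$ uniformly in $\psi$, then integrating over $\psi$ and using Fubini together with the rotation‑invariance of arclength on $\Sphere^1_r$ turns $\Expec_{\Theta_n}\int|\Phi_{N-n}(\psi-\Theta_n)|^2\,d\psi$ back into $\int_{\Sphere^1_r}|\Phi_{N-n}|^2$, which is exactly the per‑block bound. So everything reduces to the \emph{key estimate}: for $c<|\xi|<R$,
\[
\Expec_{\Theta_n}\big|\Expec[e^{i\langle\xi,Y_n\rangle}\mid\Theta_n]\big|^2\;=\;\Expec\!\big[\cos\langle\xi,\,Y_n-Y_n'\rangle\big]\;\le\;1-\tfrac{c''}{n},
\]
where $Y_n,Y_n'$ are $n$‑step walks coupled to share the same total rotation $\Theta_n$.

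The Diophantine/$\DPV$ input enters only in the key estimate, and proving it is the step I expect to be the main obstacle. When the left side above is $\ge1-c''/n$, for a dominant portion of the coupled walks the phase $\langle\xi,Y_n\rangle$ is $\Theta_n$‑almost‑deterministic, which — after unwinding the walk and using $Y_n=\sum_j\rho_{<j}v(g_j)$ as in Section~\ref{sec:low-freq} — pins enough integer‑linear combinations of the translation increments $\langle\rho_{<j}^{-1}\xi,v(g_j)\rangle$ close to $2\pi\bbZ$. Using $g_1,g_2\in\supp\mu$ with $\theta:=\theta(g_1g_2^{-1})$, the commutator identity $[(g_1g_2^{-1})^q,\tau_w]=\tau_{(e^{iq\theta}-1)w}$ together with a Horner‑type iteration produces words of controlled length and \emph{trivial total rotation} representing the translations $\tau_{p(e^{i\theta})v}$ for every integer polynomial $p$ of weight $\le n$, where $v$ is a fixed nonzero translation in the group (if none exists the group fixes a point, $\sigma=0$, and the statement is vacuous). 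Since $U_{\tau_u}$ is multiplication by the unimodular function $\psi\mapsto e^{i\langle re^{i\psi},u\rangle}$, the pinning forces $\langle re^{i\psi},p(e^{i\theta})v\rangle$ to lie near $2\pi\bbZ$ for all such $p$ at once; but for $c<r<R$, $\DPV(n,R)$ makes $\{p(e^{i\theta})v\}$ a net of an annulus of inner radius $\asymp1/r$, which cannot be contained in $\{w:\Rept(e^{i\psi}w)\in\tfrac{2\pi}{r}\bbZ+\text{(small)}\}$ — a contradiction. The genuinely delicate part — and the reason this amounts to adapting Varj\'u's spectral method rather than a soft computation — is (i) tracking how the pinning error is \emph{amplified} upon forming the integer combinations that realize $p$, so the contradiction still operates at the scale made available by $\DPV(n,R)$ (this amplification is what produces the loss $c\mapsto c''/n$), and (ii) carrying out the conditioning on $\Theta_n$ uniformly over its law, which is necessary because for atypical $\Theta_n$ the conditional walk really can be deterministic.
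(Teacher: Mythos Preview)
Your transfer-operator setup and the identity $\int_{\Sphere^1_r}|\Expec e^{i\langle\xi,Y_N\rangle}|^2=\|T^N\mathbf{1}\|_{L^2}^2$ match the paper exactly (its Claim and~\eqref{eq:ltwo-bd}), as does the commutator/Horner construction of pure-translation words $\tau_{p(e^{i\theta})v}$ from $g_1,g_2$ (its Lemma~\ref{lem:poly-to-trans}). Where you diverge is in how you extract decay. The paper does not go through a per-$n$-block Cauchy--Schwarz and a pointwise ``key estimate'' on $M(\psi)=\Expec_{\Theta_n}|\chi_{\Theta_n}(re^{i\psi})|^2$; it bounds the \emph{single-step} operator norm $\|T\|_{L^2\to L^2}\le 1-c'/n^2$ directly (Lemma~\ref{lem:translations-imply-decay}). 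The mechanism is an almost-invariant-vector argument: if $\|T\|=1-\delta$, pick $f$ with $\|Tf-f\|\lesssim\delta$; since $T=\Expec_\mu U_g$ is a finite average of unitaries, uniform convexity forces $\|U_gf-f\|\lesssim\delta^{1/2}$ for \emph{every} $g\in\supp\mu$, hence $\|U_wf-f\|\lesssim n\delta^{1/2}$ for \emph{every} word $w$ of length $\le n$. Specializing to the translation words $\tau_{p(e^{i\theta})v}$ and using $\DPV(n,R)$ then gives the contradiction unless $\delta\gtrsim n^{-2}$.

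Your route has a real gap at the sentence ``the pinning forces $\langle re^{i\psi},p(e^{i\theta})v\rangle$ to lie near $2\pi\bbZ$ for all such $p$ at once.'' The only pinning you have produced is \emph{probabilistic}: $\Expec[\cos\langle\xi,Y_n-Y_n'\rangle]\ge 1-c''/n$ says that most coupled pairs $(Y_n,Y_n')$, under the walk measure, land near the lattice. But each specific translation word $\tau_{p(e^{i\theta})v}$ has probability $\le |\supp\mu|^{-cn}$ under that measure, so a Markov-type argument cannot isolate any one of them, let alone all simultaneously. When you then write ``since $U_{\tau_u}$ is multiplication by a unimodular function\ldots,'' you are implicitly applying $U_{\tau_u}$ to a near-invariant $f$ that your argument never constructed --- that object exists in the paper's argument, not in yours. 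The ``amplification'' you flag as delicate is precisely what the almost-invariant-vector trick supplies for free: it upgrades the averaged statement $\|Tf-f\|\lesssim\delta$ to a statement about every generator and hence every word, with the amplification appearing transparently as the factor $n$ in $\|U_wf-f\|\lesssim n\delta^{1/2}$. The clean fix is to abandon the pointwise bound on $M(\psi)$ and bound $\|T\|$ (or $\|T^*T\|$ in the non-symmetric case) instead --- which is exactly Lemma~\ref{lem:translations-imply-decay}.
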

There are two steps in the proof of Proposition~\ref{prp:DPV-to-hf}.  The first is based on an argument of
Varj\'u ~\cite{varju2015random} and Bourgain~\cite{bourgain2015}, which connects a spectral gap bound at high frequencies
to generating translations at many scales using short words.   The second, which is unique to the two-dimensional setting,
is to relate the translation elements in $\supp \mu^{\ast n}$ to polynomial values of $e^{i\theta}$.

First we explain the spectral gap in more detail.
Define the operator $T:\mcal{S}(\bbR^2)\to\mcal{S}(\bbR^2)$
by $$Tf(x) = \mathbb{E}_{g\sim \mu} f(gx).$$
Then for any $f\in \mathcal{S}(\mathbb{R}^2),$ and
$\xi\in \mathbb{R}^2$ with $|\xi| = r$ one has
\begin{equation}\label{eq:disintegration-along-spheres}
\begin{aligned}
\widehat{Tf}(\xi)
& = \mathbb{E}_{g\sim \mu}\int_{\mathbb{R}^2}f(g(x))e^{-i\langle x,\xi\rangle}dx\\
& = \mathbb{E}_{g\sim \mu} \int_{\mathbb{R}^2}f(x)e^{-i\langle g^{-1}(x),\xi\rangle}dx\\
& = \mathbb{E}_{g\sim \mu}\exp(i\langle \xi, v(g)\rangle) \hat{f}(\rho_{\theta(g)}(\xi)).
\end{aligned}
\end{equation}
Hence the decay of the fourier transform of $T^nf$ is related to the
spectral gap of the family of operators $(\hat{T}_r)_{r>0}$
acting on the spaces $(L^2(\mathbb{S}_r^1))_{r>0}$ via
\[
\hat{T}_r f := \Expec_{g\sim \mu} \hat{u}_r(g) f,
\]
with $\hat{u}_r$ being a unitary representation of $\Isom(\mathbb{R}^2)$ on $L^{2}(\mathbb{S}^1_r)$ given by
\begin{align}
\label{eq:ug-def}
\left(\hat{u}_r(g)f\right)(\xi) := \exp(i\langle \xi, v(g)\rangle) f(e^{i\theta(g)}\xi).
\end{align}
\begin{claim}
For any $r>0$
\begin{align}
\label{eq:ltwo-bd}
\int_{\Sphere^1_r} |\Expec e^{-i\langle \xi, Y_N\rangle}|^2 \diff \xi
= \|(\hat{T}_r)^N 1\|_{L^2(\Sphere^1_r)\to L^2(\Sphere^1_r)}^2.
\end{align}
\end{claim}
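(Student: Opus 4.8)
I expect this to present no genuine difficulty: the identity is obtained by unwinding the definitions of $\hat{u}_r$ and $\hat{T}_r$, and the only step needing any care is bookkeeping the semidirect-product structure of $\Isom(\Real^2)$ that makes $\hat{u}_r$ a representation. The plan is as follows. First I would iterate the defining relation $\hat{T}_r f=\Expec_{g\sim\mu}\hat{u}_r(g)f$: since operator composition is bilinear and the samples may be taken to be independent, for every $N$
\[
(\hat{T}_r)^N=\Expec_{g_1,\dots,g_N}\bigl[\hat{u}_r(g_1)\hat{u}_r(g_2)\cdots\hat{u}_r(g_N)\bigr],
\]
with $g_1,\dots,g_N$ i.i.d.\ of law $\mu$; as $\supp\mu$ is finite this is a finite average of unitary operators, so there is nothing analytic to justify. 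Because $\hat{u}_r$ is a unitary representation, the product equals $\hat{u}_r(g_1g_2\cdots g_N)$. (Under the opposite convention, making $\hat{u}_r$ an anti-representation, one would get $\hat{u}_r(g_N\cdots g_1)$ instead; this is immaterial below, since $(g_N\cdots g_1)(0)$ and $Y_N=(g_1\cdots g_N)(0)$ have the same law by exchangeability of the $g_j$.)

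Next I would evaluate at the constant function $\One$ on $\Sphere^1_r$. By \eqref{eq:ug-def}, the rotation part of any $h\in\Isom(\Real^2)$ fixes constants, so $(\hat{u}_r(h)\One)(\xi)=e^{i\langle\xi,v(h)\rangle}$; hence, using $v(g_1\cdots g_N)=(g_1\cdots g_N)(0)=Y_N$ from \eqref{def:YN},
\[
\bigl((\hat{T}_r)^N\One\bigr)(\xi)=\Expec\, e^{i\langle\xi,\,v(g_1\cdots g_N)\rangle}=\Expec\, e^{i\langle\xi,Y_N\rangle}.
\]
Finally, taking squared $L^2(\Sphere^1_r)$-norms and using that $Y_N$ is real-valued, so that $\bigl|\Expec e^{i\langle\xi,Y_N\rangle}\bigr|=\bigl|\Expec e^{-i\langle\xi,Y_N\rangle}\bigr|$ for every $\xi$, I would conclude
\[
\|(\hat{T}_r)^N\One\|_{L^2(\Sphere^1_r)}^2=\int_{\Sphere^1_r}\bigl|\Expec e^{i\langle\xi,Y_N\rangle}\bigr|^2\diff\xi=\int_{\Sphere^1_r}\bigl|\Expec e^{-i\langle\xi,Y_N\rangle}\bigr|^2\diff\xi,
\]
which is the claimed identity.

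The single point worth checking is the representation property used in the first step. This is routine: $\hat{u}_r(g)$ is multiplication by the unimodular function $\xi\mapsto e^{i\langle\xi,v(g)\rangle}$ followed by precomposition with a rotation of $\Sphere^1_r$, hence unitary, while the homomorphism identity $\hat{u}_r(g)\hat{u}_r(h)=\hat{u}_r(gh)$ reduces to $\theta(gh)=\theta(g)+\theta(h)$, $v(gh)=v(g)+\rho_{\theta(g)}v(h)$, and the orthogonality relation $\langle\rho\xi,\eta\rangle=\langle\xi,\rho^{-1}\eta\rangle$ for rotations $\rho$. This verification is the only point where the semidirect-product structure enters; it is routine, and is the closest thing here to a genuine obstacle.
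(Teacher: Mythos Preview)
Your proof is correct and follows essentially the same approach as the paper's. The paper's own proof is a two-line appeal to the identity $\Expec f(Y_N)=T^Nf(0)$ together with the earlier Fourier computation~\eqref{eq:disintegration-along-spheres}; you instead work directly in Fourier space by iterating $\hat{T}_r=\Expec\hat{u}_r(g)$ and invoking the (anti-)representation property of $\hat{u}_r$, which is the same argument made explicit. Your remark that the representation/anti-representation ambiguity is harmless by exchangeability of the $g_j$ is exactly the right observation.
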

\begin{proof}
Since $\mathbb{E}f(Y_N) = T^Nf(0),$
the claim follows immediately from \eqref{eq:disintegration-along-spheres}.
\end{proof}

We will relate the spectral gap of $\hat{T}_r$ to our question about polynomials in two steps.
The first step
is to show that $1-\|\hat{T}_r\|$ is large if $\supp \mu^{\ast k}$ contains group elements that
perform pure translations at scale $r$ for some relatively small $k$.
The second step is to relate this set of pure translation elements to the values of polynomials.

\subsection{Bounding the spectral gap from ample translations}

We define $\trans_n(\mu)$ to be the
set of pure translations in $\supp \mu^{\ast n}$, i.e.
\[
\trans_n(\mu) := \{v\in\Real^2 \mid \tau_v \in \supp \mu^{\ast n}\}.
\]
When $\mu$ is not symmetric, it is possible that $\trans_n=\emptyset$.  In the nonsymmetric
case we will use instead $\trans_n(\mu\ast \mu^{-1})$.
We will need to show that the vectors in $\trans_n(\mu)$ (or $\trans_n(\mu\ast \mu^{-1})$)
are sufficiently dense at small scales.
The following definition encapsulates the notion of density we need.
\begin{definition}
A set $A\subset \Real^2$ is $\delta$-\emph{ample} for some $\delta>0$ if
the set $\{\log z \mid z\in A\}$ is an $0.1$-net in $[\log \delta, 0]\times [0,2\pi]$.
\end{definition}
Note that if for each $\delta < r < 1$
and $\theta \in [0,2\pi]$, there exists $v = \rho e^{i\phi} \in T$ such that
$r < \rho < 1.1r$ and $|\phi-\theta| < 0.1$, then $T$ is
$\delta$-\emph{ample}. When $\trans_n(\mu)$ is $\delta$-ample,
we have the spectral gap
$1-\|\hat{T}_r\|\gtrsim n^{-2}$ for all $r < \delta^{-1}$.

\begin{lemma}\label{lem:translations-imply-decay}
Fix any $c>0$. If $\mu=\mu^{-1}$ and
$\trans_n(\mu)$ is $\delta$-ample then for any
$c\leq r \leq \delta^{-1}$
we have
\[
\|\hat{T}_r\|_{L^2(\Sphere^1_r)\to L^2(\Sphere^1_r)}
\leq 1-\frac{c'}{n^2}.
\]
The constant $c'$ depends on $c$ and $\mu$. Alternatively if $\mu\not=\mu^{-1}$ and $\trans_n(\mu\ast \mu^{-1})$ is $\delta$-ample,
then for any $c\leq r \leq \delta^{-1}$ we have
\[
\|\hat{T}_r\|_{L^2(\Sphere^1_r)\to L^2(\Sphere^1_r)}
\leq \sqrt{1-\frac{c'}{n^2}}.
\]
The constant $c'$ depends on $c$ and $\mu$.
\end{lemma}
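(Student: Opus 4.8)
The plan is to bound the operator norm $\|\hat{T}_r\|$ by exhibiting many ways in which $\hat{u}_r(g)$, for $g$ ranging over words of length $n$, acts like multiplication by an oscillating phase on $L^2(\Sphere^1_r)$, and to combine these via a standard $L^2$-flattening / almost-orthogonality argument. Concretely, $\hat{T}_r^n = \Expec_{g \sim \mu^{\ast n}} \hat{u}_r(g)$, and since $\mu^{\ast n}$ assigns positive mass to each pure translation $\tau_v$ with $v \in \trans_n(\mu)$, we can write $\hat{T}_r^n = p\,\Expec_{v} \hat{u}_r(\tau_v) + (1-p) E$ where the first average is over those translations (with weights coming from $\mu^{\ast n}$), $\|E\| \le 1$, and $p \gtrsim 1$ depends only on $\mu$ and the word length bound. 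Here $\hat{u}_r(\tau_v) f(\xi) = e^{i\langle \xi, v\rangle} f(\xi)$ is genuinely a multiplication operator. So the whole problem reduces to: the averaging operator $M := \Expec_v e^{i\langle \xi, v\rangle}$, a multiplication operator on $L^2(\Sphere^1_r)$ by the function $\xi \mapsto \Expec_v e^{i\langle \xi, v\rangle}$, has sup-norm bounded away from $1$ uniformly on $\Sphere^1_r$ when $\{v\}$ is $\delta$-ample and $r \le \delta^{-1}$. Indeed, $\delta$-ampleness guarantees that for the frequency $\xi = r e^{i\varphi}$ there is some $v = \rho e^{i\phi}$ in the set with $\langle \xi, v\rangle = r\rho\cos(\varphi - \phi)$, and one can arrange $r\rho$ to lie in, say, $[1, 10]$ and $\cos(\varphi-\phi)$ bounded away from the zeros of the relevant trigonometric expression, so that $\Expec_v e^{i\langle \xi, v\rangle}$ is not of modulus one — it averages a phase $e^{i\langle \xi, v\rangle}$ that is bounded away from $1$ against a phase of modulus $1$, losing a definite amount. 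The upshot is $\|M\|_{L^\infty(\Sphere^1_r)} \le 1 - c''$, hence $\|\hat{T}_r^n\| \le p(1-c'') + (1-p) = 1 - pc''$, and taking $n$-th roots (using $\|\hat{T}_r\| \le 1$ always) gives $\|\hat{T}_r\| \le (1 - pc'')^{1/n} \le 1 - c'/n$. To recover the stated $1 - c'/n^2$ one should be slightly more careful: the phase $e^{i\langle \xi, v\rangle}$ is only guaranteed to differ from $1$ by something like $n^{-1}$ in the worst case (when $\delta$-ampleness forces small available $|v|$ relative to $r$ near $r = \delta^{-1}$), which degrades the per-step gain to $\sim n^{-2}$ and then $\|\hat T_r\|\le (1-c n^{-2})^{1/n}$... — actually the cleaner route, matching the paper's exponent, is to not take $n$-th roots at all in the first branch but to apply the averaging identity once at level $n$ and read off $\|\hat T_r^{\,n}\|\le 1-c'n^{-2}$ directly, then use $\|\hat T_r\|^n \le \|\hat T_r^{\,n}\|$ to get $\|\hat T_r\| \le (1-c'n^{-2})^{1/n} \le 1-c'n^{-2}$, absorbing constants; I would present it this way.

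For the asymmetric case $\mu \ne \mu^{-1}$, one cannot work with $\hat{T}_r$ directly since $\supp\mu^{\ast n}$ may contain no pure translations. Instead I would consider $\hat{T}_r^{\ast}\hat{T}_r$, which is the operator associated to the symmetric measure $\mu^{-1} \ast \mu$ (up to checking that $\hat u_r(g)^\ast = \hat u_r(g^{-1})$, which is immediate from \eqref{eq:ug-def} since the phase is unitary and $e^{i\theta(g)}\xi \mapsto e^{-i\theta(g)}\xi$ is a measure-preserving rotation of $\Sphere^1_r$). Then $\|\hat{T}_r\|^2 = \|\hat{T}_r^\ast \hat{T}_r\| = \|\widehat{(T')_r}\|$ where $T'$ is built from $\mu^{-1}\ast\mu$, wait — one must be careful about the order, $\mu\ast\mu^{-1}$ versus $\mu^{-1}\ast\mu$; these have the same set of pure-translation elements up to sign and the same behavior, so $\delta$-ampleness of $\trans_n(\mu\ast\mu^{-1})$ transfers to what is needed. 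Applying the symmetric case to this new symmetric measure with word-length parameter $n$ gives $\|\hat{T}_r^\ast\hat{T}_r\| \le 1 - c'/n^2$, hence $\|\hat{T}_r\| \le \sqrt{1 - c'/n^2}$, which is exactly the claimed bound.

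The main obstacle is the quantitative extraction of the gap from $\delta$-ampleness uniformly down to $r = \delta^{-1}$: near that endpoint the only translations $v$ in the ample set with $|\xi||v| = O(1)$ may have $\langle \xi, v\rangle$ as small as $\sim n^{-1}$ (reflecting that producing a small translation at scale $\delta \sim r^{-1}$ may require a word of length $\sim n$ whose net displacement is a vector of length $\sim r^{-1}$ but whose inner product with $\xi$ — after optimizing the angle within the $0.1$-net tolerance — is only controlled to within the net scale), so the per-application loss in $\|\hat T_r^{\,n}\|$ is of size $n^{-2}$ rather than a constant. I would need to track this carefully: pick the witness $v$ from $\delta$-ampleness so that $\log v$ is within $0.1$ of $\log(\xi^{-1} \cdot t)$ for a well-chosen $t$ of modulus in $[1,2]$ and argument chosen so that $\Rept(\bar\xi v) = \Rept(t)$ is bounded away from $2\pi\bbZ$; then $|1 - e^{i\langle\xi,v\rangle}| \gtrsim 1$, giving $|\Expec_v e^{i\langle\xi,v\rangle}|$ — here the averaging is the subtle point, since we average over \emph{all} $v\in\trans_n$ not just the witness, so we only get $|\Expec_v e^{i\langle\xi,v\rangle}| \le 1 - c\,\mu^{\ast n}(\tau_{v_{\mathrm{wit}}})(1-\cos(\langle\xi,v_{\mathrm{wit}}\rangle))$ roughly, and $\mu^{\ast n}(\tau_{v_{\mathrm{wit}}}) \ge e^{-Cn}$... — so in fact one does \emph{not} want to pass to $\mu^{\ast n}$ with $p\gtrsim 1$; rather one writes $\hat T_r = \sum_g \mu(g)\hat u_r(g)$ and uses positivity of the measure together with the presence of \emph{one} short word generating a near-identity-modulo-translation element at scale $r$. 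This is exactly the Bourgain–Varjú mechanism cited in the text; I would follow that reference for the precise bookkeeping, as the $n^{-2}$ (rather than $n^{-1}$ or constant) is structural and matches~\eqref{eq:ltwo-bd} feeding into Proposition~\ref{prp:DPV-to-hf}.
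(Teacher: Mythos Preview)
Your proposal has a genuine gap, and you recognize it yourself toward the end: the decomposition
\[
\hat{T}_r^{\,n} = p\,\Expec_{v}\hat{u}_r(\tau_v) + (1-p)E
\]
is useless because the total mass $p$ of pure translations under $\mu^{\ast n}$ is only bounded below by something like $(\min_g \mu(g))^n$, which is exponentially small in $n$. Even the weaker route you try next --- bounding the multiplication operator $M(\xi)=\Expec_v e^{i\langle\xi,v\rangle}$ away from $1$ --- fails for the same reason: $\delta$-ampleness furnishes, for each $\xi$, \emph{one} witness $v_{\rm wit}$ with $\langle\xi,v_{\rm wit}\rangle$ bounded away from $2\pi\bbZ$, but the conditional weight of that single witness inside $\mu^{\ast n}$ is again exponentially small, so you only get $|M(\xi)|\le 1-e^{-Cn}$. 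Your final paragraph acknowledges this and defers to ``the Bourgain--Varj\'u mechanism'' without saying what it is; that is the missing idea.

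The paper's argument avoids averaging over $\mu^{\ast n}$ entirely and instead runs an \emph{almost-invariant vector} argument. Suppose $\|\hat{T}_r\|=1-\delta$; since $\hat{T}_r$ is self-adjoint there is a unit vector $f$ with $\|\hat{T}_rf-f\|\lsim\delta$. Because $\hat{T}_r=\Expec_{g\sim\mu}\hat{u}_r(g)$ is an average of unitaries with individual weights $\mu(g)\gsim_\mu 1$, one gets $\|\hat{u}_r(g)f-f\|\lsim\delta^{1/2}$ for \emph{every} generator $g\in\supp\mu$ (this is where the square root, and hence ultimately the $n^{-2}$, enters). Iterating over a word of length $n$ gives $\|\hat{u}_r(g_n)f-f\|\lsim n\delta^{1/2}$ for every $g_n\in\supp\mu^{\ast n}$ --- no probability weights appear. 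Now locate a point $\xi_0\in\Sphere^1_r$ where $|f|^2$ has mass $\gsim 1$, use $\delta$-ampleness to pick $\tau_v\in\supp\mu^{\ast n}$ with $\langle\xi_0,v\rangle$ in, say, $[c/2,1/2]$, and compute directly that $\|\hat{u}_r(\tau_v)f-f\|=\|(e^{i\langle\cdot,v\rangle}-1)f\|\gsim 1$. Comparing gives $n\delta^{1/2}\gsim 1$, i.e.\ $\delta\gsim n^{-2}$. Your treatment of the asymmetric case via $\hat{T}_r^\ast\hat{T}_r$ is correct and matches the paper.
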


\begin{proof}
Suppose first that $\mu=\mu^{-1}$, so that $\hat{T}_r=\hat{T}_r^*$, and suppose that
\[
\|\hat{T}_r\|_{L^2(\Sphere^1_r)\to L^2(\Sphere^1_r)}= 1-\delta.
\]
Since $\hat{T}_r$ is self-adjoint, there exists $f\in L^2(\mathbb{S}^1_r)$, such that
\[
\|\hat{T}_r(f)-f\|_2\lsim \delta.
\]
We can write $\hat{T}_r$ as an average
over $\mu$ of the the action of the unitary representations $\hat{u}_r(g)$,
defined in \eqref{eq:ug-def}.
Since $\hat{T}_rf = \Expec_\mu \hat{u}_r(g) f$, $\mu$ has finite support, and $\|\hat{T}_r(f)-f\|_2\lsim \delta$,
it follows that
\[
\|\hat{u}_r(g) f-f\|_2 \lsim \delta^{1/2}
\]
for all $g\in \supp\mu$.
Again using unitarity, this implies that for any $g_n\in \supp(\mu^{\ast n})$, we have
\begin{equation}
\label{eq:iterated}
\|\hat{u}_{r}(g_n)f-f\|_2 \lsim n\delta^{1/2}.
\end{equation}

Since $\|f\|_2 = 1$, there exists $\xi_0\in \mathbb{S}^1_r$, such that
\[
\int_{\Sphere^1_r}\One_{|\xi-\xi_0|<r/10} |f(\xi)|^2 d\xi \gsim 1.
\]
On the other hand, since $\trans_n(\mu)$ is $\delta$-ample, and  $c\leq r \leq \delta^{-1}$,
there is some $v$ with $\tau_v\in\supp(\mu^{\ast n})$ such that
\[
\frac{c}{2}\leq \langle \xi_0, v\rangle \leq \frac12.
\]
But then using the definition~\eqref{eq:ug-def} of $\hat{u}_r(g)$ we have
\[
\|\hat{u}_r(g) f-f\| = \|e^{i\langle \cdot, v\rangle}f -f\|_2 \geq c'
\]
which combined with \eqref{eq:iterated} yields $n\delta^{1/2} \geq c'$, implying the claim.

If $\mu\not=\mu^{-1}$ we consider instead the operator $\hat{T}_r^*\hat{T}_r$ which is
$\hat{T}_r(\mu\ast \mu^{-1})$.  Applying the result from the symmetric case yields
\begin{align*}
\|\hat{T}_r f\|_{L^2}^2
\leq \|f\|_{L^2} \|\hat{T}_r^*\hat{T}_rf\|_{L^2}
\leq (1-\frac{c'}{n^2}) \|f\|_{L^2}^2.
\end{align*}
\end{proof}

\subsection{Relation to polynomial values}
For a polynomial $p(z)=\sum_{j=0}^d c_j z^j$ with integer coefficients we define the weight $w(p)$ to be
\[
w(p) := \sum_{j=0}^d (|c_j|+1) = \deg p + \|p\|_{\ell^1} + 1.
\]
Further for any $z\in \mathbb{C}$, we define the set
of polynomial values
$$V_n(z) := \left\{p(z) \mid w(p) \leq n\right\}.$$
The following lemma connects $V_n(z)$ to $\trans_n(\mu)$.

\begin{lemma}
\label{lem:poly-to-trans}
Suppose $\mu=\mu^{-1}$ is a probability measure with finite support on $\Isom(\Real^2)$ and let $g_1=\tau_{v_1} \circ \rho_{\theta_1} \in \supp \mu$ and
$g_2=\tau_{v_2} \circ \rho_{\theta_2}\in\supp \mu$ with $g_1g_2\not=g_2g_1$.
Then for some $a\in \bbC\setminus\{0\}$,
\[
a V_n(e^{i\theta_1})\subset \trans_{4n}(\mu).
\]
\end{lemma}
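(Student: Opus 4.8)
The goal is to realize arbitrary low-weight polynomial values of $z_1 := e^{i\theta_1}$, scaled by a fixed nonzero constant $a$, as translation parts of words of length at most $4n$ in $\supp\mu$. The starting point is the observation that since $g_1g_2\neq g_2g_1$ and both are planar isometries, the commutator $[g_1,g_2] = g_1g_2g_1^{-1}g_2^{-1}$ is a nontrivial pure translation; call it $\tau_a$ with $a\in\bbC\setminus\{0\}$. (Here we use that the commutator subgroup of $\Isom(\Real^2)$ consists of translations, together with the fact that non-commuting elements produce a nontrivial commutator.) Since $\mu=\mu^{-1}$, both $g_1^{-1}$ and $g_2^{-1}$ lie in $\supp\mu$, so $\tau_a\in\supp\mu^{\ast 4}$; this is the base case $V_1(z_1)\ni\{0,\pm 1,\dots\}$... more precisely it handles the generator needed for the construction below.

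Next I would exploit conjugation to move this translation around by powers of $z_1$. Conjugating a translation $\tau_v$ by $\rho_{\theta_1}$ (i.e. by the rotational part of $g_1$) gives $\rho_{\theta_1}\tau_v\rho_{\theta_1}^{-1} = \tau_{z_1 v}$. But $\rho_{\theta_1}$ itself may not be in the group; what \emph{is} available is $g_1 = \tau_{v_1}\circ\rho_{\theta_1}$. Conjugating $\tau_v$ by $g_1$ gives $g_1\tau_v g_1^{-1} = \tau_{z_1 v}$ as well, since the translation part $v_1$ of $g_1$ commutes with all translations — conjugation by $g_1$ acts on the translation $\tau_v$ purely through the rotational part. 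So $g_1^j \tau_v g_1^{-j} = \tau_{z_1^j v}$ for every $j\in\bbZ$. Combined with the fact that products of translations add their vectors, we get: for any integers $c_0,\dots,c_D$,
\[
\prod_{j=0}^D g_1^j\, \tau_a^{c_j}\, g_1^{-j} \;=\; \tau_w, \qquad w = a\sum_{j=0}^D c_j z_1^j = a\, p(z_1),
\]
where $p(z) = \sum c_j z^j$. This exhibits $a\,p(z_1)$ as a translation part of a word in $\{g_1, g_1^{-1}, \tau_a, \tau_a^{-1}\}\subset\supp\mu\cup\supp\mu^{\ast 4}$; the point is to bound the word length by $4n$ when $w(p)\le n$.

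The remaining step is the bookkeeping on word lengths. Writing $\tau_a$ itself costs $4$ letters (it is $g_1g_2g_1^{-1}g_2^{-1}$ or similar), and $\tau_a^{c_j}$ costs $4|c_j|$ letters; the conjugating prefix/suffix $g_1^{\pm j}$ contribute but telescope — rather than writing $g_1^j \tau_a^{c_j} g_1^{-j}$ separately for each $j$ and paying $\sum 2j$, one should write the whole product left-to-right and cancel: $g_1^{-j}\cdot g_1^{j+1} = g_1$, so the total cost of all the $g_1^{\pm}$ letters across the whole product is $O(D)$, in fact exactly $D$ plus a constant once one writes it as $g_1^{0}\tau_a^{c_0}g_1\tau_a^{c_1}g_1\cdots$. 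So the total word length is on the order of $\sum_j 4|c_j| + D \le 4\sum_j(|c_j|+1) = 4w(p) \le 4n$, which is exactly the claimed bound (one should be slightly careful to absorb the extra constant from writing $\tau_a$ as a commutator into the $4$, or to note that we only need $\le 4n$ for $n$ large, or adjust constants — this is routine).

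\textbf{Main obstacle.} The one genuinely substantive point — as opposed to bookkeeping — is verifying that the commutator $[g_1,g_2]$ is a \emph{nontrivial} translation, i.e.\ that $a\neq 0$, using only the hypothesis $g_1g_2\neq g_2g_1$; and then confirming that conjugation by $g_1$ really does act on translations exactly as multiplication of the translation vector by $z_1=e^{i\theta_1}$, independent of $v_1$. Both are short computations in $\Isom(\Real^2)$ (using $g(x)=e^{i\theta(g)}x+v(g)$), but they are where all the structure is used. The telescoping word-length estimate is the other place to be careful, but it is purely combinatorial. Everything else follows formally.
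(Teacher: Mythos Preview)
Your proposal is correct and follows essentially the same approach as the paper: both use the commutator $[g_1,g_2]=\tau_a$ to produce a nonzero seed translation and the conjugation identity $g_1\tau_v g_1^{-1}=\tau_{z_1 v}$ to build $\tau_{a\,p(z_1)}$ out of these two moves. The paper packages the construction as an induction on $w(p)$ (either $p=q\pm1$, costing $4$ letters, or $p=zq$, costing $2$) rather than writing the telescoped word at once; one small slip in your bookkeeping is that the telescoped word $\tau_a^{c_0}g_1\tau_a^{c_1}g_1\cdots g_1\tau_a^{c_D}$ needs a trailing $g_1^{-D}$ to be a pure translation, so the $g_1^{\pm}$-cost is $2D$ rather than $D$, but $2D+4\|p\|_{\ell^1}\le 4(D+1+\|p\|_{\ell^1})=4w(p)$ so the bound $\le 4n$ survives.
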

\begin{proof}
We argue by induction.  First observe that the commutator of $g_1$ and $g_2$ generates a pure translation:
\begin{equation}
\label{eq:gcommutator}
g_1 g_2 g_1^{-1} g_2^{-1} = \tau_a
\end{equation}
for some $a\in\bbC\setminus \{0\}$ (by hypothesis).
Also note that for any $u\in\bbC$,
\begin{equation}
\label{eq:u-conjugation}
g_1 \tau_u g_1^{-1} = \tau_{e^{i\theta_1} u}.
\end{equation}

In particular, the commutator identity~\eqref{eq:gcommutator} implies
\[
a V_2 \subset \trans_8,
\]
as $\pm1$ are the only two polynomials with $w(p) \leq 2$.
This is the start of our induction.  Suppose for some $k \geq 2$, we have
$a V_k \subset \trans_{4k}(\mu)$, and let $w(p) = k+1$.  Then for some $q$ with
$w(q) \leq k$ we have either $p=q\pm 1$ or else $p(x)=xq$.
In the former case, we have
\[
a p(z) = a q(z) + a,
\]
and we can express translation by $ap(z)$ by a word of length $4k+4$ using~\eqref{eq:gcommutator},
\[
\tau_{ap(z)} = \tau_{aq(z)} \tau_a
\]
In the other case $p(z)=zq(z)$ we can express $p(z)$ using a word of length $4k+2$
using~\eqref{eq:u-conjugation},
\[
\tau_{ap(z)} = \tau_{azp(z)} = g_1 \tau_{ap(z)} g_1^{-1}.
\]
\end{proof}

\subsection{Proof of Proposition~\ref{prp:DPV-to-hf}}
If $\theta$ satisfies $\DPV(n,R)$, then $V_n(e^{i\theta})$ is $R^{-1}$-ample.
Lemma~\ref{lem:poly-to-trans} then implies that $\trans_{4n}(\mu)$ is $R^{-1}$-ample,
so by Lemma~\ref{lem:translations-imply-decay} we have
\[
\|\hat{T}_r\|_{L^2(\Sphere^1_r)\to L^2(\Sphere^1_r)} \leq 1 - \frac{c'}{n^2}
\]
for all $c<r<R$. Then the conclusion of Proposition~\ref{prp:DPV-to-hf} follows from~\eqref{eq:ltwo-bd}.

\section{Random isometries with irrational rotations}
\label{sec:poly-reduction}
In this section we prove Proposition~\ref{prp:super-poly}, which states that the dense polynomial value property for irrational $\theta$
holds down to superpolynomial scales.  We then show how this implies Theorem~\ref{thm:main-diophantine}.

\subsection{Proof of Proposition~\ref{prp:super-poly}}
\label{sec:super-pf}
First we restate Proposition~\ref{prp:super-poly} for the reader's convenience.
\superpoly*

The proof is constructive, and the basic gadget we use in the construction is the polynomial
$p_{q,k} := (z^q-1)^k$. Note that $|p_{q,k}(z)| = |z^q-1|^k$ and
$w(p_{q,k}) = 2^k + qk + 1$.  Hence, if we can find $q < n^C$ so that $|z^q-1|\lsim n^{-1}$,
then taking $k \approx \log(r^{-1})/\log(n)$ implies $p_{q,k}$ is a polynomial of weight $n^C$ satisfying
$|p_{q,k}(z)| < r$.  The diophantine condition
can be used to give a rough lower bound on the size of $|p_{q,k}(z)|$
and by tweaking $p_{q,k}$ one can then adjust the angle and improve the precision of the absolute value.

\begin{lemma}
\label{lem:goodq}
Suppose $z=e^{i\theta}$ for $\theta$ satisfying~\eqref{eq:DCm}. There exists $C>0$
such that for any $n\in\bbN$, there exists $q\in\bbN$, $q\leq Cn^{m}$,
such that
\begin{equation}
\label{eq:goodq}
\frac{1}{2n} < |z^q-1| < \frac{1}{n}
\end{equation}
\end{lemma}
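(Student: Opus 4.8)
Since \eqref{eq:DCm} is unsatisfiable for $m\le 1$ (by Dirichlet's theorem), we may assume $m>1$. The key identity is $|z^q-1|=2\sin\!\bigl(\tfrac12\delta_q\bigr)$, where $\delta_q:=\operatorname{dist}(q\theta,2\pi\bbZ)\in[0,\pi]$; so \eqref{eq:goodq} is just the requirement that $\delta_q$ lie in an interval of length comparable to $1/n$ around a point of order $1/n$. Two facts about $\delta_q$ drive the proof. First, \eqref{eq:DCm} is equivalent to the statement that $\delta_q\ge q^{1-m}$ for all sufficiently large $q$. Second, writing $p_k/q_k$ for the continued-fraction convergents of $\theta/(2\pi)$, one has $\delta_{q_k}<2\pi/q_{k+1}$, so convergent denominators realize very small values of $\delta_q$. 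The plan is to pick a convergent denominator $q_k\le 4\pi n$ — which forces $\delta_{q_k}$ small, indeed too small — and then inflate it by an integer multiple to push $|z^q-1|$ up into $(\tfrac1{2n},\tfrac1n)$ without letting $q$ grow past $Cn^m$.

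For $n$ large I would take $q_k$ to be the largest convergent denominator of $\theta/(2\pi)$ with $q_k\le 4\pi n$; this is well defined, and $q_k\to\infty$ as $n\to\infty$ since the $q_i$ are unbounded. Because $q_{k+1}>4\pi n$, we get $\delta_{q_k}<2\pi/q_{k+1}<\tfrac1{2n}$, hence $|z^{q_k}-1|=2\sin(\tfrac12\delta_{q_k})<\delta_{q_k}<\tfrac1{2n}$: the point $z^{q_k}$ lands just below the target window. At the same time, once $n$ is large enough that $q_k$ exceeds the threshold in \eqref{eq:DCm}, the same denominator obeys the matching lower bound $\delta_{q_k}\ge q_k^{1-m}\ge(4\pi n)^{1-m}$. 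These two bounds on $\delta_{q_k}$ are the input to the next step.

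To inflate, note $z^{q_k}=e^{\pm i\delta_{q_k}}$, so $|z^{jq_k}-1|=2\sin(\tfrac12 j\delta_{q_k})$ whenever $j\delta_{q_k}\le\pi$; over that range the quantity is strictly increasing in $j$, rising from $|z^{q_k}-1|<\tfrac1{2n}$ at $j=1$ to a value exceeding $1$, and its consecutive values differ by at most $\delta_{q_k}<\tfrac1{2n}$ (mean value theorem). Let $j^\ast$ be the least $j\ge 1$ with $|z^{jq_k}-1|>\tfrac1{2n}$; then $j^\ast\ge 2$, the value at $j^\ast-1$ is $\le\tfrac1{2n}$, and since the step to $j^\ast$ is strictly less than $\tfrac1{2n}$ we obtain $\tfrac1{2n}<|z^{j^\ast q_k}-1|<\tfrac1n$. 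Thus $q:=j^\ast q_k$ does the job, and since the increasing range ends at $j\delta_{q_k}\approx\pi$ we have $j^\ast\le\pi/\delta_{q_k}\le\pi(4\pi n)^{m-1}$, so $q=j^\ast q_k\le\pi(4\pi n)^{m-1}\cdot 4\pi n=Cn^m$. The finitely many small $n$ for which $q_k$ is not yet large enough to invoke \eqref{eq:DCm} are dealt with directly: $\theta/(2\pi)$ is irrational, so $z$ is not a root of unity, hence $\{|z^q-1|:q\ge 1\}$ is dense in $[0,2]$ and meets $(\tfrac1{2n},\tfrac1n)$ for some $q$, and $C$ can be enlarged to cover them.

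The heart of the matter is a two-sided squeeze on $q$: merely forcing $|z^q-1|\le\tfrac1n$ already requires $q\gtrsim n^{1/(m-1)}$, and any convergent $q_k\lesssim n$ overshoots so that $|z^{q_k}-1|$ lies \emph{below} the target, while climbing back up by passing to $jq_k$ costs a factor $j$ in the size of $q$. The argument goes through precisely because (i) the increments of $\bigl(2\sin(\tfrac12 j\delta_{q_k})\bigr)_j$ are at most $\delta_{q_k}$, which is smaller than the window width $\tfrac1{2n}$, so the interval $(\tfrac1{2n},\tfrac1n)$ cannot be jumped over; and (ii) the Diophantine lower bound $\delta_{q_k}\ge(4\pi n)^{1-m}$ caps the required multiple $j^\ast$ at $\pi(4\pi n)^{m-1}$, keeping $q=j^\ast q_k$ inside $Cn^m$. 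I expect (ii) to be the main obstacle — controlling the interaction of the convergent denominator and the multiplier so that their product does not exceed $n^m$ — since this is exactly what pins the exponent and makes the Diophantine exponent $m$ appear on both sides of the estimate.
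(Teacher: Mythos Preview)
Your proof is correct and follows essentially the same two-step strategy as the paper: first locate some $d=O(n)$ with $|z^d-1|$ below the target window, use \eqref{eq:DCm} to lower-bound it by $\gtrsim n^{1-m}$, and then pass to a multiple $q=\ell d$ with $\ell=O(n^{m-1})$ to land inside $(\tfrac{1}{2n},\tfrac{1}{n})$. The only difference is cosmetic---the paper produces $d$ by pigeonholing on $\{z^j\}_{j\le Cn}$ rather than via a continued-fraction convergent denominator---and your write-up is in fact more careful than the paper's in verifying that the increments $|z^{(\ell+1)d}-1|-|z^{\ell d}-1|$ are strictly smaller than the window width $\tfrac{1}{2n}$ and so cannot skip over it.
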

\begin{proof}
Taking $C$ large, and pigeonholing on the set $\{z^j\}_{j=1}^{Cn}$, we can find $1\leq j<j' \leq Cn$
such that $|z^{j'-j}-1| = |z^j-z^{j'}| < n^{-1}$. On the other hand,
the Diophantine condition implies $|z^{j'-j}-1| \geq c n^{-m}$.
Hence ~\eqref{eq:goodq} holds for some $q=\ell (j'-j)$ with
$\ell \leq Cn^{(m-1)}$.
\end{proof}

We are now ready to prove Proposition~\ref{prp:super-poly}.
\begin{proof}[Proof of Proposition~\ref{prp:super-poly}]
Fix a large $n$, a scale
$e^{-c\log(n)^2}\leq r\leq 1$,
and an angle $\varphi\in [0,2\pi)$.
Note that by adjusting
$c$ in the statement and taking $n$ large
it suffices to prove
there exists $p(z)\in \mathbb{Z}[z]$
such that $w(p)\leq n^C$,
$r\leq |p(z)|\leq 1.1r,$ and
$|\arg(p(z))-\varphi|\leq 0.1$,
for some constant $C$ depending only on $\theta$.

Our $p(z)$ will be of the form
\[
p(z) = \ell z^{j}(z^q-1)^k,
\]
for some $\ell, j, q,k\in \mathbb{N}$.
The integers $q$ and $k$ are chosen
so that $(z^q-1)^k$ is the roughly the right magnitude,
and the factors $\ell$ and $z^j$ are chosen to
correct the magnitude and angle.
Note that $p$ has weight
\begin{equation}\label{eq:norm-estimate}
w(p) = \deg p + \|p\|_{\ell^1} + 1
= j + qk + \ell 2^k + 1.
\end{equation}

We choose first $q$ using Lemma~\ref{lem:goodq} such that $q\leq n^{m+1}$
and
\[
\frac{1}{2n} \leq |z^q-1| \leq \frac1n.
\]
Then set $k = \lceil \frac{\log (0.01 r)}{\log |z^q-1|}\rceil$, which satisfies $2^k\leq n^C$
and
\[
\frac{r}{200n} \leq |z^q-1|^k \leq \frac{r}{100}.
\]
Then choose $\ell\leq 200n$ such that
\[
r \leq \ell |z^q-1|^k \leq 1.1r.
\]
Finally, since $\theta$ is irrational there is $j\leq C$, such that
\[
|\arg(z^j\ell(z^q-1)^k)-\varphi|\leq \frac{1}{10}.
\]
Our bounds on $\ell$, $j$, $k$, and $q$ together with~\eqref{eq:norm-estimate}
imply that $w(p) \leq n^C$, as desired.
\end{proof}

\subsection{Proof of Theorem~\ref{thm:main-diophantine}}
\label{sec:diophantine-pf}
By Lemma~\ref{lem:llerr}, it suffices to prove that
\begin{align}
\label{eq:llt-ft}
\int_{|\xi|<\exp(c (\log n)^2)} |\Expec e^{-i\langle \xi,Y_N\rangle} - \Expec e^{-\frac12 \sigma^2N|\xi|^2}|d\xi
\lsim N^{-\frac32}.
\end{align}
For $c>0$ small, the low frequency estimate of Proposition~\ref{prp:lowfreq} implies
\begin{align*}
\int_{|\xi|\leq c} |\Expec e^{-i\langle \xi,Y_N\rangle} - \Expec e^{-\frac12 \sigma^2N|\xi|^2}|d\xi
& \lsim \int_{|\xi|\leq c} e^{-\frac{1}{2}\sigma N|\xi|^2}|1-e^{CN|\xi|^3}|d\xi \\
& \lsim N\int_{|\xi|\leq c} e^{-\frac{1}{2}\sigma N|\xi|^2}|\xi|^3 e^{CN|\xi|^3}d\xi \\
& \lsim N^{-3/2}.
\end{align*}
It therefore suffices to show that
\begin{align}
\label{eq:hf-FT}
\int_{c<|\xi|<\exp(c (\log n)^2)} |\Expec e^{-i\langle \xi,Y_N\rangle}| \diff \xi \lsim N^{-\frac32}.
\end{align}

We split the argument into two cases.  The first case is that $\theta(g_1^{-1}g_2)$ satisfies~\eqref{eq:DCm}
for some $g_1,g_2\in\supp\mu$. In this case, Proposition~\ref{prp:super-poly} combined with Proposition~\ref{prp:DPV-to-hf} implies that for $r < \exp(c (\log N)^2)$,
\[
\int_{\Sphere^1_r} |\Expec  e^{i\langle \xi, Y_N\rangle}|^2 \diff \xi < \exp(-c N^{0.1}).
\]
Using Cauchy-Schwarz and integrating in radial coordinates, this implies~\eqref{eq:hf-FT}.

The second case is that $\theta(g) = \theta_0$ for all $g\in\supp\mu$.  In this case, the characteristic function of
$Y_N$ can be written explicitly as follows:
\[
\Expec e^{i\langle \xi, Y_N\rangle}
= \prod_{j=0}^{N-1} \hat{\nu}(e^{ij\theta_0}\xi),
\]
where $\nu \in \mcal{M}(\Real^2)$
is the probability measure of $g(0) \overset{d}{=} \tau_g$.  Above we
have identified $\Real^2$ with $\bbC$, meaning that we write
$\hat{\nu}(\xi)$ for the two-dimensional Fourier transform of $\nu$, evaluated at
$\xi\in\Real^2\simeq \bbC$.
Up to a global rotation and scaling
we can assume that $\supp(\Rept \mu)$ contains two atoms seperated by $\pi$,
so that $\Ft{\nu}$ satisfies a bound of the form
\[
|\hat{\nu}(\xi)| \leq 1 - cd(\Rept \xi, \bbZ)^2 < \exp(-cd(\Rept \xi,\bbZ)^2).
\]
Let $c < |\xi| < \exp(c(\log N)^2)$.  We will prove that for any $k\in[N]$,
\begin{align}
\label{eq:tokyo}
\sum_{j=k}^{k+N^{0.1}} d(\Rept [e^{ij\theta_0} \xi],\bbZ)^2 \gtrsim N^{-0.2},
\end{align}
which implies $|\Expec e^{i\langle \xi, Y_N\rangle}| < \exp(-cN^{0.7})$, and therefore~\eqref{eq:hf-FT}.
Now we prove~\eqref{eq:tokyo}.
If we define $\xi' := e^{ik\theta_0}\xi$, then
Proposition~\ref{prp:super-poly} implies that there
exists a polynomial $p$ such that
$w(p) < N^{0.1}$ and $0.001 < \Rept(p(e^{i\theta_0}) \xi') < 0.1$.  Then, with $c_j$ being the coefficients of $p$,
\begin{align*}
c' &<
d (\Rept [p(e^{i\theta_0}) \xi'],\bbZ) \\
&< \sum_{j=0}^{N^{0.1}} |c_j| d(\Rept [e^{ij\theta_0}\xi'], \bbZ) \\
&\leq \|p\|_{\ell^2} \Big(\sum_{j=k}^{k+N^{0.1}} d(\Rept [e^{ij\theta_0}\xi],\bbZ)^2\Big)^{1/2}.
\end{align*}
Now~\eqref{eq:tokyo} follows from $\|p\|_{\ell^2} \leq \|p\|_{\ell^1} < w(p)$.

\section{Symmetric random isometries with rational cosine}
\label{sec:symmetric}
In this section, we prove Proposition~\ref{prp:exp-Rbds} and complete the proof of Theorem~\ref{thm:main-symmetric}.
restated below.  These concern the case that $\mu$ has a rotation with angle $\theta$ satisfying
$\cos\theta \in\bbQ\setminus \frac12\bbZ$.

In general, if $\cos(\theta)\in\bbQ$ then $z=e^{i\theta}$ satisfies
\begin{align}
\label{eq:minimal-poly} p_{\rm min}(z) = a z^2 - b z + a = 0,
\end{align}
with $a,b\in\bbZ$ coprime and $\cos(\theta)=\frac{b}{2a}$.
Since $\cos(\theta)\not\in\frac12\bbZ$
we can rule out the case $a=1$, so we can assume $a>1$ and we have $|b|<2a$.  In this case,
$p_{\rm min}(z)$ is indeed the minimal polynomial of $z$.  We will fix $\theta, p_{\rm min}$, $a$,
and $b$ throughout this section.

First in Section~\ref{sec:matveev} we state a Diophantine condition for $\theta$.
Then in Section~\ref{sec:resultant} we prove that $z$ satisfies the ``dense polynomial
values'' condition, establishing the bound $R_{\DPV}(n,\theta) > \exp(cn/\log(n))$.
Finally, in Section~\ref{sec:sym-proof} we complete the proof of Theorem~\ref{thm:main-symmetric}.

\subsection{Diophantine approximations to $\theta$}
\label{sec:matveev}
The point of this section is to establish that $\theta$ satisfies $\eqref{eq:DCm}$.
\begin{proposition}
\label{cor:matv}
Let $z =  e^{i\theta}$ with $\cos\theta\in \mathbb{Q}\backslash\frac{1}{2}\mathbb{Z}$.
Then there exist constants $c>0$ and $m\in \mathbb{N}$ such that for $p,q,k\in\bbZ$,
\begin{align}
\label{eq:diophantine}
|\theta - \frac{2\pi p}{q}| > c q^{-m}
\end{align}
and also
\begin{align}
\label{eq:rept-lower}
|\Rept z^k| > c k^{-m}.
\end{align}
\end{proposition}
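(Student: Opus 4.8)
The plan is to derive both inequalities from Baker's theorem on linear forms in logarithms, in the explicit form due to Matveev. The essential input is that $z=e^{i\theta}$ is an algebraic number which is \emph{not} a root of unity: were it one, $2\cos\theta=z+z^{-1}$ would be a rational algebraic integer, hence an element of $\bbZ$, contradicting $\cos\theta\notin\tfrac12\bbZ$. By \eqref{eq:minimal-poly}, $z$ has degree $2$ over $\bbQ$ and logarithmic Weil height $h(z)=\tfrac12\log a$, and $\bbQ(z)=\bbQ(z,-1)$; all of these quantities are fixed once $\theta$ is fixed. In the linear forms below, the growing parameters $q$ and $k$ will enter only through integer coefficients, so Matveev's bound — linear in the logarithm of the largest coefficient — will produce a power saving with an absolute exponent.

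For \eqref{eq:diophantine} we may assume $q\ge 1$, and since the bound is trivial unless $2\pi p/q$ lies within $\pi/q$ of $\theta$, we may take $p$ to be the integer nearest $q\theta/2\pi$, so that $|p|\le q$. Consider the linear form
\[
\Lambda := q\log z-2p\log(-1)=i\,(q\theta-2\pi p),
\]
with the determinations $\log z=i\theta$ and $\log(-1)=i\pi$; it is nonzero since $z$ is not a root of unity. Matveev's theorem, applied to the two logarithms $\log z,\log(-1)$ with largest coefficient $\le 2q$, yields a constant $\kappa=\kappa(a,b)$ with $\log|\Lambda|\ge -\kappa\log(3q)$. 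Hence $|q\theta-2\pi p|\ge (3q)^{-\kappa}$, so $|\theta-2\pi p/q|\gsim q^{-\kappa-1}$; taking $m:=\kappa+1$ and shrinking $c$ to absorb the finitely many bounded values of $q$ (for which $\min_p|\theta-2\pi p/q|>0$ as $\theta\notin 2\pi\bbQ$) gives \eqref{eq:diophantine}, and in particular shows $\theta$ satisfies \eqref{eq:DCm}.

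For \eqref{eq:rept-lower} note that $\Rept z^k=\cos(k\theta)$, and since $\cos(k\theta)=\cos(|k|\theta)$ we may assume $k\ge 1$. Let $\ell$ be the integer nearest $k\theta/\pi-\tfrac12$, so $|\ell|\le 2k$, and put
\[
\Lambda := 2k\log z-(2\ell+1)\log(-1)=i\,\bigl(2k\theta-(2\ell+1)\pi\bigr),
\]
which is purely imaginary with $|\Lambda|\le\pi$ and nonzero (otherwise $z^{2k}=-1$, so $z^{4k}=1$). Matveev's theorem gives $\log|\Lambda|\ge -\kappa'\log(6k)$ for a constant $\kappa'=\kappa'(a,b)$. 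Now $z^{2k}=-e^{\Lambda}$, so $1+z^{2k}=1-e^{\Lambda}$, while $|1+z^{2k}|=|z^k+z^{-k}|=2|\cos(k\theta)|$; combining these with the elementary inequality $|1-e^{it}|=2|\sin(t/2)|\ge\tfrac{2}{\pi}|t|$ valid for $|t|\le\pi$ gives $2|\cos(k\theta)|\ge\tfrac{2}{\pi}|\Lambda|\ge\tfrac{2}{\pi}(6k)^{-\kappa'}$, which is \eqref{eq:rept-lower} after relabelling $m$ and $c$.

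The one genuine difficulty is that a Baker-type bound is \emph{necessary}: the elementary route — a Liouville/norm estimate for the nonzero algebraic number $q\log z-2p\log(-1)$, or the observation that $2\cos(k\theta)\in a^{-k}\bbZ$ — only delivers lower bounds of size $a^{-O(q)}$ (resp.\ $a^{-O(k)}$), which are exponentially small and completely useless for \eqref{eq:DCm}. What makes Baker/Matveev applicable, and what pins the exponents $m$ down to absolute constants, is exactly that in the forms above the large parameters sit in the integer coefficients while the algebraic data and the ambient degree stay fixed, so Matveev's bound — being linear in $\log(\text{largest coefficient})$ — transforms directly into a power saving.
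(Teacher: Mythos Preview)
Your proof is correct and follows essentially the same approach as the paper: both reduce to Baker's theorem (in Matveev's form) on linear forms in logarithms of $z$ and a root of unity, with the large parameter appearing only as an integer coefficient. Your argument is more detailed---the paper simply invokes the special case stated as Theorem~\ref{thm:matveev} with $(\alpha,\beta)=(z,1)$ and $(\alpha,\beta)=(z,i)$, whereas you work with $\beta=-1$ and carry out the estimates explicitly---but the substance is identical.
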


While it is easily seen that $\theta$ must be irrational (if it were rational, then
$p_{min}(z)$ in \eqref{eq:minimal-poly} would divide $z^{q}-1$, for some $q$,
which is impossible since $a>1$), it is not obvious that $\theta$
should satisfy \eqref{eq:DCm}.  This more quantitative result is a consequence of
Baker's celebrated work on linear forms in logarithms of algebraic numbers~\cite{MR220680,MR498417,MR2382891}.

\begin{theorem}[Special case of~\cite{MR2382891}, Theorem 7.1]
\label{thm:matveev}
Let $\alpha,\beta\in\bbC$ be algebraic numbers and $\log\alpha,\log\beta$ be logarithms of
$\alpha$ and $\beta$.  If $\log\alpha$ and $\log\beta$ are independent
over $\mathbb{Z}$, then there exist constants $c,C>0$, depending on $\alpha$ and $\beta$,
such that for any $a_1,a_2\in\bbZ$,
\[
|a_1 \log \alpha + a_2 \log\beta| \geq c (\max\{|a_1|,|a_2|\})^{-C}.
\]
\end{theorem}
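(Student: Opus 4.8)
The plan is to indicate the classical Gelfond--Baker auxiliary function argument specialized to two logarithms, which is the simplest instance of the cited result (see also \cite{MR220680,MR498417}). Write $\lambda_1 := \log\alpha$, $\lambda_2 := \log\beta$, $\Lambda := a_1\lambda_1 + a_2\lambda_2$, $A := \max\{|a_1|,|a_2|\}$, and set $K := \bbQ(\alpha,\beta)$, $d := [K:\bbQ]$. We may assume $a_1,a_2\neq 0$ and $A$ large: if $(a_1,a_2)=(0,0)$ there is nothing to prove, and if exactly one $a_i$ vanishes then $|\Lambda| = |a_i|\,|\lambda_i|\geq \min\{|\lambda_1|,|\lambda_2|\}>0$ (both $\lambda_i\neq 0$ by $\bbZ$-independence), which already yields the bound for any $C$. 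Independence of $\lambda_1,\lambda_2$ over $\bbZ$ also forces $\Lambda\neq 0$ and makes $\alpha,\beta$ multiplicatively independent. We argue by contradiction: fix a constant $C_0=C_0(\alpha,\beta)$, to be chosen at the end, and suppose $0<|\Lambda|<A^{-C_0}$.

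Next I would construct the auxiliary polynomial. Choose integers $L,S,T$ of size polynomial in $\log A$ with $(L+1)^2>2dST$ and some room to spare. By Siegel's lemma over $K$ there is a nonzero $P(X,Y)=\sum_{0\le i,j\le L} p_{ij}X^iY^j\in\bbZ[X,Y]$ with $\log\max_{i,j}|p_{ij}|\ll T\log(LA)+LS$ satisfying the $ST$ relations
\[
\gamma_{m,t} := \sum_{0\le i,j\le L} p_{ij}\,(a_2 i - a_1 j)^t\,\alpha^{im}\beta^{jm}=0,\qquad 0\le t<T,\ 1\le m\le S;
\]
this system falls under Siegel's lemma because each $\gamma_{m,t}$ is a $\bbZ$-linear form in the unknowns $p_{ij}$ with coefficients in $K$ of logarithmic height $\ll T\log(LA)+LS$. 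The point of the integer weights $(a_2 i-a_1 j)^t$ is the identity $i\lambda_1+j\lambda_2=\tfrac{\lambda_1}{a_2}\bigl((a_2 i-a_1 j)+\tfrac{j\Lambda}{\lambda_1}\bigr)$, which shows that the order-one entire function
\[
f(z):=P(\alpha^z,\beta^z)=\sum_{i,j}p_{ij}\,e^{(i\lambda_1+j\lambda_2)z}
\]
satisfies $f^{(t)}(m)=(\lambda_1/a_2)^t(\gamma_{m,t}+E_{m,t})$, where $E_{m,t}$ collects all terms carrying a positive power of $\Lambda$ and hence obeys $|E_{m,t}|\ll |\Lambda|\exp\bigl(O(T\log(LA)+LS')\bigr)$ whenever $m\le S'$. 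Consequently, under the contradiction hypothesis, $f$ ``nearly vanishes'' to order $T$ at $z=1,\dots,S$: each $|f^{(t)}(m)|$ is below any fixed negative power of $A$.

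The third step is analytic extrapolation followed by a Liouville descent. Since $\log\max_{|z|\le\rho}|f(z)|\ll \rho L+T\log(LA)+LS$, a Hermite/Schwarz interpolation estimate propagates the near-vanishing: on a larger range $1\le m\le S'$ (still polynomial in $\log A$) and for $0\le t<T$, the values $f^{(t)}(m)$, and therefore $\gamma_{m,t}=(a_2/\lambda_1)^t f^{(t)}(m)-E_{m,t}\in K$, stay in modulus below the Liouville bound $\exp(-C_1(T\log(LA)+LS))$ valid for any nonzero element of $K$ of logarithmic height and logarithmic denominator $\ll T\log(LA)+LS$. Hence $\gamma_{m,t}=0$ for $0\le t<T$, $1\le m\le S'$; re-running the extrapolation also raises the order, giving $\gamma_{m,t}=0$ for $0\le t<T'$, $1\le m\le S'$ with $T'S'>c(L+1)^2$. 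Writing $\mathcal{D}:=a_2 X\partial_X-a_1 Y\partial_Y$, so that $\gamma_{m,t}=(\mathcal{D}^t P)(\alpha^m,\beta^m)$, these relations say $\mathcal{D}^t P$ vanishes to order $T'$ at the points $(\alpha^m,\beta^m)$; a zero estimate for $\mathbb{G}_m^2$ (elementary in this case, using that $\alpha,\beta$ are multiplicatively independent so the points do not lie on a curve of controlled degree) forces $P\equiv 0$, contradicting its construction. Taking $C_0$ large enough to absorb the above comparisons completes the proof, with $C$ depending only on $\alpha,\beta$.

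I expect the third step — the extrapolation and parameter balancing — to be the main obstacle. One must fix $L,S,T,S',T'$ as powers of $\log A$ so that Siegel's lemma produces coefficients small enough, the Schwarz estimate genuinely enlarges the vanishing range, the resulting algebraic number falls below the Liouville threshold, and the accumulated vanishing overwhelms the zero estimate, all simultaneously and all robust to the error term $E_{m,t}$ governed by $|\Lambda|$. Faithfully tracking that error through the interpolation is exactly what upgrades the conclusion from a qualitative transcendence statement to the quantitative polynomial-in-$A$ bound asserted in the theorem.
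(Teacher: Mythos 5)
The paper does not prove this statement at all: it is quoted verbatim as a special case of Matveev's theorem (\cite{MR2382891}, Theorem 7.1), i.e.\ as a black-box input from Baker's theory of linear forms in logarithms. So there is no internal proof to compare against, and you are attempting to reconstruct one of the deepest results cited in the paper from scratch. Your sketch does follow the correct architecture of the Gelfond--Baker transcendence machine (Siegel's lemma, near-vanishing of an auxiliary exponential polynomial, Schwarz-lemma extrapolation, Liouville descent, zero estimate), but two of its steps have genuine gaps beyond the admitted difficulty of parameter balancing.

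First, the quality of the final bound. With $L,S,T$ chosen polynomial in $\log A$, the contradiction hypothesis you can afford is $|\Lambda| < \exp\bigl(-C(T\log(LA)+LS')\bigr)$, and since $T\log(LA)+LS'$ is then a power of $\log A$ strictly greater than one, the method as you have set it up yields only $|\Lambda| \geq \exp(-c(\log A)^{\kappa})$ for some $\kappa>1$. This is strictly \emph{weaker} than the asserted polynomial bound $c\,A^{-C}=\exp(-C\log A)$, and the polynomial bound is exactly what the paper needs to verify the condition~\eqref{eq:DCm} in Proposition~\ref{cor:matv}. Upgrading $(\log A)^{\kappa}$ to $\log A$ is not a matter of bookkeeping: it requires Feldman's refinement (replacing the monomial weights $(a_2i-a_1j)^t$ by binomial-coefficient polynomials to gain divisibility savings), or the interpolation-determinant approach to two logarithms of Laurent--Mignotte--Nesterenko. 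Your sketch omits this entirely. Second, your zero estimate is phrased via multiplicative independence of $\alpha$ and $\beta$, which you claim follows from $\bbZ$-independence of the chosen logarithms; it does not, and it fails in precisely the case the paper uses the theorem, namely $\beta=1$ with $\log\beta=2\pi i$ (there $\beta^1=1$ is a multiplicative relation, and all points $(\alpha^m,\beta^m)$ lie on the line $Y=1$). The correct zero estimate here is the one for exponential polynomials: the frequencies $i\lambda_1+j\lambda_2$, $0\le i,j\le L$, are pairwise distinct because $\lambda_1,\lambda_2$ are $\bbZ$-independent, so a nonzero $f$ has at most $O(L^2+\rho L)$ zeros with multiplicity in a disk of radius $\rho$ (Tijdeman's lemma). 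As written, your final contradiction does not go through in the application at hand.
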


Since $\theta$ is irrational,
Proposition \ref{cor:matv} is an immediate corrollary of the above theorem.

\begin{proof}[Proof of Proposition \ref{cor:matv}]
Equation~\eqref{eq:diophantine} follows from applying Theorem~\ref{thm:matveev} with
$\alpha=z$, $\log\alpha=i\theta$ and $\beta=1$, $\log\beta=2\pi i$ to obtain
\[
|q \theta - 2\pi p| > c (\max\{|p|,|q|\})^{-C},
\]
and observing we can assume $|p|<|q|$.
The second bound~\eqref{eq:rept-lower} follows from applying Theorem~\ref{thm:matveev}
with $\alpha=z$ and $\beta=i$ (equivalently, it also follows from~\eqref{eq:diophantine}
directly).
\end{proof}

\subsection{Dense polynomial values for rational cosines}
\label{sec:resultant}
We are ready to prove Proposition~\ref{prp:exp-Rbds}, restated below.
\algebraic*

As opposed to the proof of Proposition \ref{prp:super-poly}, the proof of
the above is non-constructive. Namely, we apply the pigeonhole principle
to the following set of polynomials:
\[
\mcal{P}_D := \{q \in\bbZ[x] \mid q(x) = \sum_{j=0}^D b_j x^j, 0\leq b_j < a\}.
\]
Note each polynomial in $\mathcal{P}_D$ has weight $O(D)$ ($a$ is fixed as the leading coefficient in $p_{\rm min}$).
The pigeonhole principle will guarantee the existence of $q_1,q_2\in\mcal{P}_D$, such that
$|q_1(z)-q_2(z)| \lsim a^{-D/2}$, and then we establish a matching
lower bound by using the resultant $\Rs(p_{\rm min}, q_1-q_2)$.

The first step is the pigeonhole principle.
\begin{lemma}
\label{lem:pigeon}
The map $\iota_z:\mcal{P}_{D}\to \bbC$ defined by evaluation $q\mapsto q(z)$ is
injective for $z$ having minimal polynomial $p_{\rm min}$
with leading coefficient $a$.   As a consequence,
for any $D>1$ there exists a polynomial $q$ of degree $D$ and integer coefficients bounded
by $a-1$ such that $|q(z)| \lsim D\cdot a^{1-D/2}$.
\end{lemma}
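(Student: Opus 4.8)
The plan is to prove the two assertions in turn: first the injectivity of $\iota_z$, and then deduce the existence of a small-value polynomial by a packing argument applied to the image. For injectivity, suppose $q_1,q_2\in\mcal{P}_D$ with $q_1(z)=q_2(z)$, and set $q:=q_1-q_2\in\bbZ[x]$; then every coefficient of $q$ has absolute value at most $a-1$, and $q(z)=0$. If $q\neq 0$, then since $p_{\rm min}$ is the minimal polynomial of $z$ we have $p_{\rm min}\mid q$ in $\bbQ[x]$, and since $p_{\rm min}=ax^2-bx+a$ has content $\gcd(a,b)=1$ and is therefore primitive, Gauss's lemma yields $q=p_{\rm min}\cdot h$ for some nonzero $h\in\bbZ[x]$. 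Comparing leading coefficients, the leading coefficient of $q$ equals $a$ times the (nonzero, integer) leading coefficient of $h$, hence has absolute value at least $a$ — contradicting that all coefficients of $q$ are bounded by $a-1$. Therefore $q=0$, i.e.\ $q_1=q_2$. Note this is exactly where the hypothesis $a>1$ (equivalently $\cos\theta\notin\frac12\bbZ$) enters.

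For the pigeonhole step: by injectivity the set $\{q(z):q\in\mcal{P}_D\}$ has exactly $a^{D+1}$ elements, and since $|z|=1$ each satisfies $|q(z)|\leq\sum_{j=0}^D(a-1)=(a-1)(D+1)=:\rho$, so all $a^{D+1}$ points lie in the disk of radius $\rho$ about the origin. Covering this disk by fewer than $a^{D+1}$ squares, each of diameter $\lsim \rho\,a^{-(D+1)/2}$, forces two of the points into a common square; hence there are distinct $q_1,q_2\in\mcal{P}_D$ with
\[
|q_1(z)-q_2(z)| \lsim \rho\, a^{-(D+1)/2} = (a-1)(D+1)\,a^{-(D+1)/2} \lsim D\, a^{1-D/2},
\]
where the last inequality uses $(a-1)a^{-1/2}\leq a$ and $D+1\leq 2D$.

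Finally I would adjust the degree: set $q_0:=q_1-q_2$, a nonzero polynomial with integer coefficients bounded by $a-1$ and $|q_0(z)|\lsim D\,a^{1-D/2}$, whose degree $d$ may be strictly less than $D$. Since $|z|=1$, the polynomial $q(x):=x^{D-d}q_0(x)$ has degree exactly $D$, has the same coefficients as $q_0$ (merely shifted up), and satisfies $|q(z)|=|z|^{D-d}|q_0(z)|=|q_0(z)|\lsim D\,a^{1-D/2}$; this $q$ has the required properties.

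I do not expect a genuine obstacle beyond the injectivity of $\iota_z$, which is the one place the algebraic structure ($a>1$, primitivity of $p_{\rm min}$) is essential; after that the argument is a routine volume count, and the only point requiring a little care is choosing the constants in the covering so that strictly fewer than $a^{D+1}$ cells suffice while keeping each cell of diameter $\lsim \rho\,a^{-(D+1)/2}$.
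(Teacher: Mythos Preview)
Your proof is correct and follows essentially the same approach as the paper: injectivity via the observation that the leading coefficient of $q_1-q_2$ lies in $[1-a,a-1]$ and hence cannot be divisible by $a$, followed by a pigeonhole argument in the disk of radius $O(aD)$. Your treatment is in fact more careful than the paper's in two places --- you invoke Gauss's lemma explicitly to pass from divisibility in $\bbQ[x]$ to $\bbZ[x]$, and you include the degree-adjustment step (multiplying by $x^{D-d}$) to ensure the output has degree exactly $D$ --- but these are refinements of the same argument rather than a different route.
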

\begin{proof}
Suppose $q_1\not=q_2\in\mcal{P}_{D}$.
Then $q_1-q_2$ has coefficients in $[1-a,a-1]$, so the
leading coefficient is not divisible by $a$.
Therefore $p_{\rm min}$ cannot divide $q_1-q_2$, so $q_1(z)\not=q_2(z)$.

The second statement then follows from the injectivity
above, the pigeonhole principle, and the fact that
$|q(z)| \leq a(D+1)$ for any $q\in\mcal{P}_{D}$.
\end{proof}

To lower bound $|q(z)|$ we use the \textit{resultant}.
The resultant of two polynomials $p$ and $q$ with integer coefficients is an integer which vanishes only if $p$ and $q$ share a root.
Suppose $p$ has degree $D$, leading coefficient $t$, and zeros $\{\lambda_j\}_{j=1}^D$ (counted with multiplicity), and
$q$ has degree $F$, leading coefficient $u$, and zeros $\{\mu_j\}_{j=1}^{F}$.  Then $\Rs(p,q)$ is given by
\[
\Rs(p,q) = t^{F} u^D \prod_{j\in[D],k\in[F]} (\lambda_j-\mu_k) = t^F\prod_{j\in[D]}q(\lambda_j) = (-1)^{DF} u^D \prod_{k\in[F]}p(\mu_k).
\]
$\Rs(p,q)$ is an integer, since it can be written
as a the determinant of an integer matrix.
In particular, $\Rs(p,q)\not=0$ implies $|\Rs(p,q)|\geq 1$.
This immediately implies a lower bound for $q(z)$.

\begin{lemma}
\label{lem:resultant-lowerbd}
For any $q\in \mcal{P}_D$
\[
|q(z)| \geq a^{-D/2}.
\]
\end{lemma}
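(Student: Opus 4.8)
The plan is to bound $|q(z)|$ from below using the resultant $\Rs(p_{\rm min}, q)$, exactly as advertised in the text immediately preceding the statement. Recall that $p_{\rm min}(x) = a x^2 - b x + a$ has leading coefficient $a$ and its two roots are $z$ and $\bar z$ (since $\cos\theta = b/(2a)$ and the two roots of the real quadratic are complex conjugates of modulus $1$). Applying the product formula for the resultant with $p = p_{\rm min}$ (degree $2$, leading coefficient $a$, roots $z, \bar z$) and $q$ (degree $D$, integer coefficients), I would write
\[
\Rs(p_{\rm min}, q) = a^{D} \, q(z)\, q(\bar z).
\]
Since $q$ has real (indeed integer) coefficients, $q(\bar z) = \overline{q(z)}$, so $q(z) q(\bar z) = |q(z)|^2$, giving $\Rs(p_{\rm min}, q) = a^D |q(z)|^2$.

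Next I would invoke the two facts about the resultant stated in the text: it is an integer (being the determinant of the Sylvester matrix, whose entries are integers because both polynomials have integer coefficients), and it is nonzero provided $p_{\rm min}$ and $q$ share no common root. The latter holds here: by Lemma~\ref{lem:pigeon} the map $\iota_z$ is injective on $\mcal{P}_D$, and in particular $q(z) \neq 0$ for $q \in \mcal{P}_D$ (one can also see this directly — $p_{\rm min}$ is the minimal polynomial of $z$ and is irreducible, so any polynomial vanishing at $z$ is divisible by $p_{\rm min}$, which is impossible for a nonzero $q$ of degree $D$ with leading coefficient not divisible by $a$). Hence $\Rs(p_{\rm min},q)$ is a nonzero integer, so $|\Rs(p_{\rm min},q)| \geq 1$, which yields $a^D |q(z)|^2 \geq 1$, i.e. $|q(z)| \geq a^{-D/2}$, as claimed.

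I do not anticipate a serious obstacle here; the lemma is essentially a direct unpacking of the resultant identity together with the integrality and non-vanishing facts. The only point requiring a little care is making sure the resultant formula is applied with the correct conventions (which polynomial plays the role of $p$ versus $q$, and the precise power of the leading coefficient), and confirming that $q(\bar z) = \overline{q(z)}$ — both of which are routine given that $q$ has integer coefficients and $p_{\rm min}$ has conjugate roots $z, \bar z$ on the unit circle.
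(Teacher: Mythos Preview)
Your proposal is correct and follows exactly the paper's approach: compute $\Rs(p_{\rm min},q)=a^D|q(z)|^2$ via the product formula (using that the roots of $p_{\rm min}$ are $z,\bar z$ and $q$ has real coefficients), observe it is a nonzero integer since $p_{\rm min}$ cannot divide $q$, and conclude $|q(z)|\geq a^{-D/2}$. You have simply made explicit a couple of steps (the conjugate-root observation and the Sylvester-matrix integrality) that the paper leaves implicit.
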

\begin{proof}
The resultant $\Rs(p_{\rm min},q)$ is nonzero (because $p_{\rm min}$ cannot divide $q$),
so we have
\[
1 \leq |\Rs(p_{\rm min},q)| = a^D |q(z)|^2.
\]
\end{proof}

We can now prove Proposition~\ref{prp:exp-Rbds}.
\begin{proof}[Proof of Proposition~\ref{prp:exp-Rbds}]
Combining Lemma~\ref{lem:pigeon} with Lemma~\ref{lem:resultant-lowerbd} we can find for each
$d$ a polynomial $q$ with weight $w(q) < (2a+1)d$ such that
\[
a^{-d/2} \leq |q_d(z)| \leq C d a^{-d/2}.
\]
Given $r > \exp(-cn/(\log n)^2)$ we can find $q_1$ with weight $w(q_1) < \frac{cn}{\log n}$
and degree $d_1\leq\frac{cn}{(\log n)^2}$ such that
\[
r < |q_1(z)| < C d_1 r.
\]
Iterating this twice more we can find polynomials $q_2$ and $q_3$ such that
$w(q_2) < C\log n$ and $w(q_3) < C\log\log n$ and
\[
r < |q_1(z)q_2(z)q_3(z)| < C (\log\log n)r.
\]
Now by applying Proposition~\ref{prp:super-poly} (which is applicable by Corollary~\ref{cor:matv}),
we can find $q_4$ with $w(q_4) < C(\log\log n)^C$
\[
r < |q_1(z)q_2(z)q_3(z)q_4(z)| < 1.1r.
\]
Note that for $n$ large, $w(q_1q_2q_3q_4) < C\frac{n}{(\log n)^2} (\log n) (\log\log n)^{C+1} < n$.
Finally, since $\theta$ is irrational, we can
adjust the angle if necessary by multiplying by $z^k$ for bounded $k$.
\end{proof}

\subsection{Proof of Theorem~\ref{thm:main-symmetric}}
\label{sec:sym-proof}
We show
$$\int_{|\xi|<\exp(c N^{1/3}/(\log N)^2)} |\Expec e^{-i\langle \xi,Y_N\rangle} - \Expec e^{-\frac12 \sigma^2N|\xi|^2}|d\xi\lsim N^{-3/2}$$
just as in Section~\ref{sec:diophantine-pf}. The low frequency contribution,
 $|\xi|\leq c$, can be estimated the same way, and thus it suffices to show that, for some $c>0$,
\begin{align}
\label{eq:cos-FT}
\int_{c<|\xi|<\exp(c N^{1/3}/(\log N)^2)} |\Expec e^{-i\langle \xi,Y_N\rangle}| \diff \xi \lsim N^{-\frac32}.
\end{align}
Combining Proposition~\ref{prp:exp-Rbds} with Proposition~\ref{prp:DPV-to-hf} implies
that for $c<r < \exp(c n / (\log n)^2)$
\[
\int_{\Sphere^1_r} |\Expec e^{-i\langle \xi,Y_N\rangle}|^2 \diff \xi \lsim \exp(-c n^{-2} N).
\]
Taking $n = N^{1/3}$, we conclude that for $r < \exp(c N^{1/3} /(\log N)^2)$, we have
\[
\int_{\Sphere^1_r} |\Expec e^{-i\langle \xi,Y_N\rangle}| \diff \xi \lsim \exp(-c N^{1/3}),
\]
from which~\eqref{eq:cos-FT} follows.

\section{Asymmetric random isometries with rational cosines}
\label{sec:aadic}
In this section we prove Theorem~\ref{thm:main-asymmetric}, which treats the case that $\mu$ is an ``asymmetric'' random isometry with an angle $\theta$ having rational cosine $\cos\theta\in\bbQ\setminus \frac12\bbZ$.

The main ingredient is the following proposition, which states that $d(r\cos(j\theta+\varphi),\bbZ)$ cannot be
too small for many consecutive values of $j$.
\begin{proposition}
\label{prp:main-five}
Let $\theta\in[0,2\pi)$ such that $\cos\theta\in\bbQ\setminus \frac12\bbZ$.
Then there exist $0<c<C$ and such that
for any $n\in \mathbb{N}$ sufficiently large, $n^{C}<r<\exp(cn)$, $\varphi\in[0,2\pi)$,
and $j\in[n]$, there exists $j'\in[j,j+C\log r]$ such that
\[
d(r \cos(j'\theta+\varphi),\bbZ) > c.
\]
\end{proposition}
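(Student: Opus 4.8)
The plan is to argue by contradiction, exploiting the three‑term recurrence $y_{k-1}+y_{k+1}=2\cos\theta\,y_k$ for $y_k:=r\cos(k\theta+\varphi)$ together with the fact that, since $\cos\theta\in\bbQ\setminus\tfrac12\bbZ$, the denominator of $2\cos\theta$ is divisible by some prime $p\ge 2$, which makes the recurrence ``$p$-adically hyperbolic.'' Fix $c>0$ small and $C>0$ large (both depending only on $\theta$) to be specified, set $L:=\lfloor C\log r\rfloor$, and suppose toward a contradiction that $d(y_k,\bbZ)\le c$ for every $k\in[j,j+L]$. Let $n_k\in\bbZ$ be the nearest integer to $y_k$ (unique as $c<\tfrac12$) and write $y_k=n_k+\eps_k$ with $|\eps_k|\le c$. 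Writing $2\cos\theta=b/a$ with $\gcd(a,b)=1$, $a\ge 2$, and $|b|<2a$ (the latter since $\cos\theta\ne\pm1$), I would multiply the recurrence by $a$ and separate integer from fractional parts to get, for each $k\in[j+1,j+L-1]$,
\[
a n_{k-1} + a n_{k+1} - b n_k \;=\; b\eps_k - a(\eps_{k-1}+\eps_{k+1}),
\]
an integer of absolute value at most $(4a-1)c<1$ once $c<\tfrac1{4a}$; hence it vanishes and $a(n_{k-1}+n_{k+1})=b n_k$ for all $k\in[j+1,j+L-1]$.

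The core of the proof is a descent on $p$-adic valuations applied to this integer recurrence. Fix a prime $p\mid a$ with $p^e\|a$; since $\gcd(a,b)=1$ we have $p\nmid b$. I claim by induction on $t\ge 0$ that $p^{et}\mid n_k$ for all $k\in[j+t,\,j+L-t]$: the case $t=0$ is trivial, and if the claim holds for $t$ then for $k\in[j+t+1,\,j+L-t-1]$ one has $p^{et}\mid n_{k-1}+n_{k+1}$, so $p^{e(t+1)}\mid a(n_{k-1}+n_{k+1})=b n_k$, and hence $p^{e(t+1)}\mid n_k$ as $p\nmid b$. Now let $t_0$ be the least integer with $p^{et_0}>2r$, so $t_0\le\log_2(2r)+1$; choosing $C$ large enough (in terms of $a$) guarantees $L\ge 2t_0+2$, so $[j+t_0,\,j+L-t_0]$ contains at least three consecutive integers. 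For such $k$, $p^{et_0}\mid n_k$ while $|n_k|\le|y_k|+c\le 2r<p^{et_0}$, forcing $n_k=0$. In particular there is $k_0$ with $n_{k_0}=n_{k_0+1}=0$.

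It remains to derive a contradiction from $y_{k_0},y_{k_0+1}$ both lying within $c$ of $0$. Identifying $\Real^2\cong\bbC$ and writing $\xi=re^{i\varphi}$, $z=e^{i\theta}$, we have $y_k=\Rpt(\xi z^k)$; put $u:=\xi z^{k_0}$, so $|u|=r$ and $|\Rpt u|\le c$, whence $|\Im u|\ge\sqrt{r^2-c^2}\ge r/2$ for $r$ large. Since $|\Rpt(zu)|=|y_{k_0+1}|\le c$ and $\Rpt(zu)=\cos\theta\,\Rpt u-\sin\theta\,\Im u$, we obtain $|\sin\theta|\le 2c/|\Im u|\le 4c/r$. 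But $\sin\theta\ne 0$ (again because $\cos\theta\ne\pm1$), so this is impossible once $r>4c/|\sin\theta|$, which holds as $r>n^C$ with $n$ large. This contradiction proves the proposition for the chosen $c,C$.

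The conceptual heart — and the only genuinely nontrivial input — is the $p$-adic hyperbolicity: the transfer matrix of the recurrence, with rows $(0,1)$ and $(-1,2\cos\theta)$, has characteristic polynomial $\lambda^2-(b/a)\lambda+1$, i.e.\ (after clearing denominators) exactly the minimal polynomial $az^2-bz+a$ of $z=e^{i\theta}$; over $\bbQ_p$ its eigenvalues $z,\bar z$ therefore have $p$-adic valuations $-e$ and $+e$, so iterating the recurrence contracts one eigendirection $p$-adically. The induction above is the elementary, coordinate-free shadow of this, and the main point requiring care is the bookkeeping — losing one index from each end of the interval per valuation-doubling step, and checking that $L=\Theta(\log r)$ still leaves room to force a genuine integer zero — while the passage to the integer recurrence and the final two-term geometric estimate are routine.
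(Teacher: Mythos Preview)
Your proof is correct and shares the paper's core mechanism: the recurrence $y_{k-1}+y_{k+1}=(b/a)y_k$ forces the nearest-integer approximants to satisfy $a(n_{k-1}+n_{k+1})=bn_k$ exactly once the errors are small, and a $p$-adic descent for a prime $p\mid a$ then drives $v_p(n_k)$ up until the $n_k$ in the middle of the interval are forced to vanish. The difference is only in the endgame. The paper, having obtained $p^{\lceil 2\log_p r\rceil}\mid m_{j_0}$, invokes Corollary~\ref{cor:matv} (ultimately Baker's theorem) to assert $m_{j_0}\neq 0$ and hence $|m_{j_0}|\geq r^2>r$, a contradiction; as written this step is slightly imprecise, since for arbitrary $\varphi$ one certainly cannot have $m_j\neq 0$ for \emph{every} $j\leq n$, and one must instead use irrationality of $\theta$ to locate some $j'$ near the centre with $|\cos(j'\theta+\varphi)|$ bounded below. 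You sidestep this entirely: you arrange for \emph{two consecutive} approximants $n_{k_0}=n_{k_0+1}=0$, and then observe by elementary trigonometry that $|\Rept u|,|\Rept(zu)|\le c$ with $|u|=r$ forces $|\sin\theta|\le 4c/r$, impossible for large $r$ since $\sin\theta\neq 0$ is fixed. Your route is thus more self-contained, avoiding any Diophantine input at this stage and cleanly closing the small gap in the paper's presentation; the paper's route, once patched, is essentially equivalent in strength.
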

The idea behind Proposition~\ref{prp:main-five} is that $r\cos(j\theta+\varphi)$ satisfies a recurrence relation with a
transfer matrix that is hyperbolic in a $p$-adic sense.
Proposition~\ref{prp:main-five} is proven in Section~\ref{sec:five-adic-pf} below, and then Theorem~\ref{thm:main-asymmetric} is deduced from Proposition~\ref{prp:main-five} in Section~\ref{sec:asym-pf}.

\subsection{Divisibility properties of $r\cos(j\theta+\varphi)$}
\label{sec:five-adic-pf}
Fix $\theta$, $n$, $r$, and $\varphi$ as in Proposition~\ref{prp:main-five}.  We set $a,b\in\bbZ$ such that
$\cos(\theta) = \frac{b}{2a}$, exactly as in~\eqref{eq:minimal-poly}.
The identity
$\cos(\alpha+\theta) + \cos(\alpha-\theta) = 2\cos(\theta)\cos(\alpha)$,
implies that  $y_j := r\cos(j\theta +\varphi)$ satisfies the following recurrence
\begin{align}
\label{eq:yj-recurrence}
y_{j+1} + y_{j-1} = \frac{b}{a}y_j.
\end{align}
We set $m_j$ to be the nearest integer to $y_j$, and $t_j$ to be the rounding error,
so that
\[
d(y_j,\bbZ) = |y_j - m_j| = t_j.
\]

The key observation is that $m_j$ also satisfies the recurrence~\eqref{eq:yj-recurrence} when $y_j$ are sufficiently
well approximated by integers.
\begin{lemma}
\label{lem:m-recurrence}
Suppose $1\leq j\leq n$ and $\max\{|t_{j-1}|,|t_j|,|t_{j+1}|\} < \frac{1}{4ab}$.
Then the integer approximants $m_j$ satisfy the recurrence
\begin{equation}
\label{eq:mj-recurrence}
m_{j+1} + m_{j-1} = \frac{b}{a} m_{j}.
\end{equation}
\end{lemma}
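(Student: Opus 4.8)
The plan is to substitute the exact decomposition $y_j = m_j + t_j$ into the recurrence \eqref{eq:yj-recurrence}, clear the denominator $a$, and then observe that the resulting identity exhibits an integer as a perturbation of size strictly less than $1$.

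First I would write
\[
(m_{j+1} + t_{j+1}) + (m_{j-1} + t_{j-1}) = \frac{b}{a}\,(m_j + t_j),
\]
and multiply through by $a$, which after rearranging gives
\[
a(m_{j+1} + m_{j-1}) - b\, m_j \;=\; b\, t_j - a(t_{j+1} + t_{j-1}).
\]
The left-hand side is an integer. To control the right-hand side I would use that $\cos\theta = \frac{b}{2a}\in(-1,1)$ forces $|b| < 2a$, while $\cos\theta\notin\frac12\bbZ$ rules out $b=0$, so $|b|\geq 1$. Combined with the hypothesis $\max\{|t_{j-1}|,|t_j|,|t_{j+1}|\} < \frac{1}{4ab}$ and the triangle inequality, this yields
\[
|b\, t_j - a(t_{j+1} + t_{j-1})| \;<\; (|b| + 2a)\cdot\frac{1}{4ab} \;=\; \frac{1}{4a} + \frac{1}{2b} \;<\; 1,
\]
the last step using $a>1$ (so $\frac{1}{4a}\leq\frac18$) and $b\geq 1$ (so $\frac{1}{2b}\leq\frac12$). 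An integer of absolute value strictly less than $1$ must vanish, so $a(m_{j+1}+m_{j-1}) = b\,m_j$, which is precisely \eqref{eq:mj-recurrence}. As a byproduct, since $\gcd(a,b)=1$ this forces $a\mid m_j$, so the quantity $\frac{b}{a}m_j$ in \eqref{eq:mj-recurrence} is genuinely an integer, consistent with the integrality of the left-hand side.

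I do not expect a genuine obstacle here; the only delicate point is arranging the numerical bound on the error term $b t_j - a(t_{j+1}+t_{j-1})$ to come out strictly below $1$ in absolute value, and the constant $\frac{1}{4ab}$ in the hypothesis is exactly calibrated for this, using both $a>1$ and $|b|<2a$. (If one allows $b<0$, every occurrence of $b$ in the estimate above should be read as $|b|$; the argument is otherwise unchanged.)
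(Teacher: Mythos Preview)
Your proof is correct and follows essentially the same approach as the paper: substitute $y_j = m_j + t_j$ into the recurrence, multiply through by $a$, and observe that the integer $a(m_{j+1}+m_{j-1}) - b\,m_j$ equals an error term of absolute value strictly less than $1$, hence vanishes. Your bookkeeping is in fact slightly cleaner than the paper's (which writes $ab\,t_j$ in place of $b\,t_j$ and arrives at the bound $\tfrac34$ rather than your implicit $\tfrac58$), and your closing remark that $a\mid m_j$ is a nice observation not made explicit there.
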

\begin{proof}
Multiply the recurrence~\eqref{eq:yj-recurrence} by $a$, writing
$y_j = m_j + t_j$ to obtain
\[
a m_{j+1} + am_{j-1} + (a t_{j+1} + a t_{j-1}) = b m_j + ab t_j.
\]
Rearranging gives
\[
a m_{j+1} - bm_j + am_{j-1} = ab t_j - a t_{j+1} - a t_{j-1}.
\]
The bound on $t_{j-1}, t_j, t_{j+1}$ implies that the right hand side is bounded by $\frac34$, but on the other hand must also be an integer,
so it vanishes.  Rearranging completes the proof of the claim.
\end{proof}

For any prime $p$ and $k\in \mathbb{Z}$, let $v_p(k)$ be the $p$-adic evaluation of $k$,
given by
$$v_p(k) :=
\begin{cases}
\max\{j\in \mathbb{Z}\mid p^j \text{ divides }k\}, &\text{ if }k\neq 0 \\
\infty, &\text{ if } k =0.
\end{cases}$$
As a consequence of~\eqref{eq:mj-recurrence}, if $p$ divides $a$, then
\[
v_p(m_j) \geq \min\{v_p(m_{j+1}),v_p(m_{j-1})\} + v_p(a).
\]
Since $a$ and $b$ are co-prime, the relation \eqref{eq:mj-recurrence} implies by
induction the following corollary.
\begin{corollary}
\label{cor:p-adic}
Suppose $1\leq j\leq n$ and $\max\{|t_{j-k}|\}_{|k|\leq K} < \frac{1}{4ab}$ and let $p$ be a prime dividing $a$.
Then $v_p(m_j) \geq K v_p(a)$.
\end{corollary}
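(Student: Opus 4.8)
The plan is to induct on $K$. The base case $K=0$ is immediate: $m_j$ is an integer, so $v_p(m_j)\geq 0$ (with the convention $v_p(0)=\infty$), and $K\,v_p(a)=0$. Everything then reduces to a single divisibility relation extracted from Lemma~\ref{lem:m-recurrence}, propagated along the recurrence.

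First I would record that consequence. Whenever $\max\{|t_{j-1}|,|t_j|,|t_{j+1}|\}<\frac{1}{4ab}$, clearing the denominator in~\eqref{eq:mj-recurrence} gives the integer identity $a\,m_{j+1}+a\,m_{j-1}=b\,m_j$. Applying $v_p$ and using that $v_p$ is super-additive on sums, the left-hand side has valuation at least $v_p(a)+\min\{v_p(m_{j+1}),v_p(m_{j-1})\}$. Since $\gcd(a,b)=1$ and $p\mid a$, we have $p\nmid b$, hence $v_p(b\,m_j)=v_p(m_j)$. Combining,
\[
v_p(m_j)\;\geq\;v_p(a)+\min\{v_p(m_{j+1}),\,v_p(m_{j-1})\}.
\]

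For the inductive step, assume the corollary for $K-1$ and suppose $\max\{|t_{j-k}|\}_{|k|\leq K}<\frac{1}{4ab}$. The radius-$(K-1)$ windows about $j+1$ and about $j-1$ both lie inside the index range $\{j-K,\dots,j+K\}$, so the corollary applies at $j\pm 1$ with parameter $K-1$, giving $v_p(m_{j\pm1})\geq (K-1)v_p(a)$. Feeding this into the displayed inequality yields $v_p(m_j)\geq v_p(a)+(K-1)v_p(a)=K\,v_p(a)$, which closes the induction.

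I do not expect a genuine obstacle here: the statement is a direct unwinding of the $p$-adic super-additivity of valuations along the linear recurrence, together with the coprimality of $a$ and $b$. The only point requiring a small amount of care is index bookkeeping — one should either check that the hypothesis $1\leq j\leq n$ of Lemma~\ref{lem:m-recurrence} remains valid for the neighbors $j\pm1$ used in the recursion, or simply note that $y_j$, and hence $m_j$ and the recurrence, are defined for all $j\in\bbZ$, so that constraint may be dropped. This is routine.
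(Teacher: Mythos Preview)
Your proof is correct and follows essentially the same line as the paper: the paper records the valuation inequality $v_p(m_j)\geq v_p(a)+\min\{v_p(m_{j+1}),v_p(m_{j-1})\}$ as an immediate consequence of~\eqref{eq:mj-recurrence} and coprimality, and then states that the corollary follows by induction. Your write-up simply makes that induction explicit, including the window bookkeeping for the neighbors $j\pm1$.
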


The proof of Proposition~\ref{prp:main-five} immediately follows.
\begin{proof}[Proof of Proposition~\ref{prp:main-five}]
Let $r,\varphi,\theta$, and $n$ as in the statement of Proposition~\ref{prp:main-five}.
As above, let $y_j := r\cos(j\theta + \varphi)$ and $m_j$ be the nearest integer to $y_j$.
Note that since $r>n^C$, $j\leq n$ and $n$ is sufficiently large, Corollary~\ref{cor:matv}
implies that $m_j\not=0$.

Let $p$ be a prime dividing $a$ and suppose $|y_{j'}-m_{j'}| < \frac{1}{4ab}$ for all $|j'-j_0| \leq 2\log_p r$.
Then Corollary~\ref{cor:p-adic} implies that $p^{\lceil 2\log_p r\rceil} | m_j$,
and thus $|m_j|\geq r^2$, since $m_j\neq 0$.
But by definition $|m_j-r\cos(j\theta +\phi)|\leq \frac{1}{2}$
for all $j$, so $|m_j|\leq r$, which is a contradiction.
Therefore there must exist $j'$ with $|j'-j_0| < \log_p r$, and
$d(y_{j'},\bbZ) > \frac{1}{4ab}$, as desired.
\end{proof}

\subsection{Proof of Theorem~\ref{thm:main-asymmetric}}
\label{sec:asym-pf}
We treat the two cases of the Littlewood polynomials and the asymmetric wreath product separately.

In the Littlewood polynomial case, we have for $\xi=re^{i\varphi}$ the identity
\[
\Expec e^{i\langle \xi,Y_N\rangle}
= \prod_{j=0}^{N-1} \cos(r\cos(j\theta+\varphi)).
\]
Now suppose $N^C < r < \exp(c\sqrt{N})$.  Then by Proposition~\ref{prp:main-five}, for each $j\in[N]$
there is some $j'\in[j,j+\sqrt{N}]$ such that $\cos(r\cos(j'\theta+\varphi)) < 1-c$.  Therefore,
\[
|\Expec e^{i\langle \xi,Y_N\rangle}| \leq \exp(-c\sqrt{N}).
\]
This implies that
\[
\int_{N^C < |\xi| < \exp(c'\sqrt{N})} |\Expec e^{i\langle \xi,Y_N\rangle}| \diff\xi
\lsim N^{-\frac32},
\]
where we decreased $c'$ as necessary. Combined with the bound~\eqref{eq:hf-FT} (which applies because, by Corollary~\ref{cor:matv}, $\theta$
satisfies a Diophantine condition), this implies~\eqref{eq:llt-ft} and therefore the result.

The second case is that of the asymmetric wreath product.  In this case there is also a formula
for the characteristic function of $Y_N$, but it is more complicated.  Let $\xi = re^{i\varphi}$, and let
$\alpha_n := \theta(g_1\cdots g_n)$ be the rotational part of $g_1\cdots g_n$.  Note that $\alpha_n = j_n \theta$
for some $j_n\in\bbZ$.  Let $\sigma_n$ be the variable
\[
\sigma_n := \begin{cases}
1 & g_n = \tau_1 \\
-1 & g_n = \tau_{-1} \\
0 & g_n \in\{\rho_{\pm\theta}\}.
\end{cases}
\]
Then
\[
Y_N = \sum_{n=1}^N \sigma_{n} e^{i\alpha_{n-1}},
\]
so conditional on the values of the angles $\alpha_n$ and writing $A_n := \sigma_n^2 = \One(g_n \in\{\tau_{\pm1}\})$, we have
\[
\Expec (e^{i\langle \xi, Y_{N}\rangle} | (\alpha_n)_{n=1}^N)
= \prod_{\substack{n\in [N] \\ A_n=1}} \cos(r\cos(\alpha_{n-1}+\varphi))
\]
Therefore,
\[
|\Expec e^{i\langle \xi, Y_{N}\rangle}|
\leq \prod_{j=0}^{\sqrt{N}}
|\Expec (\prod_{\substack{n\in[j\sqrt{N},(j+1)\sqrt{N}] \\ A_n=1}} \cos(r\cos(\alpha_{n-1}+\varphi)) | \mcal{F}_{j\sqrt{N}})|,
\]
where $\mcal{F}_n$ is the sigma algebra generated by $g_1,\dots,g_n$.
In particular, the bound
\begin{align}
\label{eq:asym-wreath-ft}
|\Expec e^{i\langle \xi, Y_{N}\rangle}| \leq \exp(-c\sqrt{N})
\end{align}
would follow from the estimate
\begin{align}
\label{eq:uniform-prob-bd}
\Prob(A_n =1\text{ for some } n < \sqrt{N}\text{ and }\cos(r\cos(\alpha_{n-1}+\varphi)) < 1-c) > c
\end{align}
with constants uniform on $\varphi$.  On the other hand, for $r < \exp(c\sqrt{N})$ and
any $\varphi$ Proposition~\ref{prp:main-five} implies that there exists some $|j|\leq c\sqrt{N}$ such that
\[
\cos(r\cos(j\theta+\varphi)) < 1-c.
\]
We have that $\alpha_n = j_n\theta$ for integers $j_n\in\bbZ$ performing a random walk with a nonzero drift, so that
for small $c$,
\[
\Prob(\text{there exists }n\leq \sqrt{N} \text{ such that }\cos(r\cos(\alpha_n+\varphi)) < 1-c) > c.
\]
For such $n$, $\Prob(A_{n+1}=1|\mcal{F}_n) > c$.  Thus~\eqref{eq:uniform-prob-bd} follows, and therefore the
estimate~\eqref{eq:asym-wreath-ft}.

\appendix
\section{Local limit theorem from Fourier bound}
\label{sec:llerr}
In this appendix we record a standard argument for recovering the local limit statement~\eqref{eq:local-lim}
from a bound on the Fourier transform.  We state a simplified variant of the result we need which is not at all sharp.
\begin{lemma}
\label{lem:llerr}
Let $\mu$ and $\nu$ be probability measures satisfying
\[
\int_{|\xi| < L} |\hat{\mu}(\xi) - \hat{\nu}(\xi)| \diff \xi < \eps,
\]
and additionally suppose that $\nu$ has density bounded by $\kappa$.
Then for any $r > L^{-1/4}$ and $x_0\in\Real^2$,
\begin{align}
\label{eq:ll-err}
|\Prob_{X\sim \mu} (X\in B_r(x_0)) - \Prob_{Y\sim \nu}(Y\in B_r(x_0))| \lsim \eps r^2 + L^{-3} + \kappa L^{-1/2} r^2.
\end{align}
\end{lemma}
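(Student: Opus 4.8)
The plan is to run the standard mollification (``smoothing inequality'') argument. We bracket $\One_{B_r(x_0)}$ between smooth, compactly supported functions $f^-\le \One_{B_r(x_0)}\le f^+$ whose Fourier transforms carry almost all their mass in $\{|\xi|<L\}$, compare $\int f^\pm\diff\mu$ with $\int f^\pm\diff\nu$ by Parseval and the hypothesis, and control the resulting bracketing error using the density bound on $\nu$.

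Concretely, fix once and for all a radial $\psi\in C_c^\infty(\Real^2)$ with $\supp\psi\subseteq B_1$, $\psi\ge 0$, $\int\psi=1$; then $\hat\psi$ is Schwartz, so $|\hat\psi(\zeta)|\le 1$ and $|\hat\psi(\zeta)|\lesssim_M (1+|\zeta|)^{-M}$ for a large fixed integer $M$. Put $\psi_\eta(x):=\eta^{-2}\psi(x/\eta)$ and choose the inflation scale $\eta':=rL^{-1/2}$ and the mollification scale $\eta:=\eta'/2$; since $r>L^{-1/4}$ we have $\eta<\eta'<r$ and $\eta L\gtrsim L^{1/4}$. Set
\[
f^+:=\One_{B_{r+\eta'}(x_0)}*\psi_\eta,\qquad f^-:=\One_{B_{r-\eta'}(x_0)}*\psi_\eta .
\]
Because $\supp\psi_\eta\subseteq B_\eta$ with $\eta<\eta'$, a direct check gives $f^+\equiv 1$ on $B_r(x_0)$ and $f^-\equiv 0$ off $B_r(x_0)$, so $f^-\le \One_{B_r(x_0)}\le f^+$; moreover $f^\pm\in C_c^\infty$, $\|\hat f^\pm\|_{L^\infty}\le |B_{r+\eta'}|\lesssim r^2$, and $\int(f^+-f^-)\diff x=|B_{r+\eta'}|-|B_{r-\eta'}|\lesssim r\eta'=r^2 L^{-1/2}$. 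Finally, combining the standard bound $|\widehat{\One_{B_\rho}}(\xi)|\lesssim \rho^2(1+\rho|\xi|)^{-3/2}$ with the Schwartz decay of $\hat\psi$ gives $|\hat f^\pm(\xi)|\lesssim r^2(1+r|\xi|)^{-3/2}(1+\eta|\xi|)^{-M}$, whence $\int_{|\xi|\ge L}|\hat f^\pm(\xi)|\diff\xi\lesssim L^{-3}$ once $M$ is chosen large enough (using $rL\gtrsim L^{3/4}$ and $\eta L\gtrsim L^{1/4}$).

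Now sandwich. From $\Prob_\mu(B_r(x_0))\le \int f^+\diff\mu$ and $\Prob_\nu(B_r(x_0))\ge \int f^-\diff\nu$,
\[
\Prob_\mu(B_r(x_0))-\Prob_\nu(B_r(x_0))\le \Big(\int f^+\diff\mu-\int f^+\diff\nu\Big)+\int(f^+-f^-)\diff\nu .
\]
Since $f^+\ge f^-\ge 0$ and $\nu$ has density $\le\kappa$, the last term is $\le \kappa\int(f^+-f^-)\diff x\lesssim \kappa L^{-1/2}r^2$. For the first term Parseval gives $\int f^+\diff\mu-\int f^+\diff\nu= c\int\hat f^+(\xi)\,\overline{(\hat\mu-\hat\nu)(\xi)}\,\diff\xi$ for an absolute constant $c$; split at $|\xi|=L$ and bound the inner part by $\|\hat f^+\|_{L^\infty}\int_{|\xi|<L}|\hat\mu-\hat\nu|\lesssim r^2\eps$ and the outer part by $2\int_{|\xi|\ge L}|\hat f^+|\lesssim L^{-3}$ (using $|\hat\mu|,|\hat\nu|\le 1$). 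This proves the upper half of \eqref{eq:ll-err}. For the lower half, swap to $\Prob_\nu(B_r(x_0))\le\int f^+\diff\nu$ and $\Prob_\mu(B_r(x_0))\ge\int f^-\diff\mu$; the only new term is $\int(f^+-f^-)\diff\mu$, which we write as $\int(f^+-f^-)\diff\nu+\int(f^+-f^-)(\diff\mu-\diff\nu)$ and estimate the second summand by Parseval exactly as above, using $\|\widehat{f^+-f^-}\|_{L^\infty}\le\|f^+-f^-\|_{L^1}\lesssim r^2L^{-1/2}$ on $\{|\xi|<L\}$ and the tail bound on its complement. Combining the two halves yields \eqref{eq:ll-err}.

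I do not anticipate a genuine obstacle: this is routine harmonic analysis. The two points to watch are the bookkeeping of the scales $\eta<\eta'<r$ — which is precisely where $r>L^{-1/4}$ enters (any $r\gg L^{-1}$ would suffice, and the margin makes $\eta L\to\infty$ quantitatively, which is what lets the off-band Fourier mass be pushed below $L^{-3}$) — and choosing $\psi$ with enough derivatives that the tail integrals $\int_{|\xi|\ge L}|\hat f^\pm|$ are as small as stated.
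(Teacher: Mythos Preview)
Your proof is correct and follows essentially the same smoothing/mollification approach as the paper: both sandwich $\One_{B_r}$ between smooth functions with rapidly decaying Fourier transforms, apply Parseval on $\{|\xi|<L\}$, and absorb the bracketing error using the density bound on $\nu$. The only cosmetic differences are that you construct the bracketing functions explicitly via convolution (the paper just asserts their existence with the required properties) and that your lower-half argument routes through an extra Parseval step on $f^+-f^-$, whereas the paper simply applies the Fourier comparison to $\rho^-$ directly.
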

\begin{proof}
Fix $r>L^{-1/4}$ and set $\delta = L^{-1/2}$.  As the hypotheses on $\mu$ and $\nu$ are translation invariant,
we may assume $x_0=0$.
Choose a smooth function $\rho_{r,\delta}\in C_c^\infty(\Real^2)$ satisfying
\begin{align*}
0 \leq \rho^-_{r,\delta} < \One_{B_r} < \rho^+_{r,\delta} &\leq 1 \\
\int |\rho^+_{r,\delta}(x) - \rho^-_{r,\delta}(x)|\diff x &\lsim \delta r  \\
|\widehat{\rho^\pm_{r,\delta}}(\xi)| &\lsim \min\{Cr^2, (c\delta |\xi|)^{-10}\}.
\end{align*}
We compute
\begin{align*}
\Big|\int \rho^\pm_{r,\delta}(x)(\diff \mu(x) - \diff\nu(x)) \Big|
&= \Big|\int \widehat{\rho^{\pm}_{r,\delta}} (\hat{\mu}(\xi) - \hat{\nu}(\xi)) \diff \xi\Big| \\
&\lsim \eps r^2  + \int_{|\xi| > L} |\widehat{\rho^\pm_{r,\delta}}(\xi)| \diff \xi \\
&\lsim \eps r^2 + L^{-3},
\end{align*}
where in the last line we have simplified using $\delta = L^{-1/2}$.
Then $\|\rho^\pm_{r,\delta} -\One_{B_r}\|_{L^1} \lsim \delta r$, so by the density bound for $\nu$ we have
\[
\Expec_{Y\sim \nu}|\One_{B_r}(Y) - \rho^\pm_{r,\delta}(Y)| \lsim \kappa L^{-1/2} r.
\]
Therefore we get an upper bound
\begin{align*}
\Prob_{X\sim\mu} \One_{B_r}(X)
&\leq
\Expec_{X\sim\mu} \rho^+_{r,\delta}(X) \\
&\leq
\Expec_{Y\sim\nu} \rho^+_{r,\delta}(Y) + C\eps r^2 + L^{-3}  \\
&\leq
\Expec_{Y\sim\nu} \One_{B_r}(Y) + C\eps r^2 + CL^{-3} +  C \kappa L^{-1/2} r.
\end{align*}
We obtain a matching lower bound by comparing to $\rho^-$, and thereby obtain~\eqref{eq:ll-err}.
\end{proof}

\printbibliography

\end{document}